\newtheorem{theorem}{Theorem}
\newtheorem{proposition}{Proposition}
\newtheorem{lemma}{Lemma}
\newtheorem{corollary}{Corollary}
\newtheorem{definition}{Definition}
\newtheorem{example}{Example}
\newcommand{\Qed}{\rule{2.5mm}{3mm}}
\newcounter{case}
\renewcommand{\thecase}{\arabic{case}}
\newcounter{subcase}
\numberwithin{subcase}{case}
\newcounter{subsubcase}
\numberwithin{subsubcase}{subcase}
\newenvironment{proof}{{\noindent \sc Proof.}}{\hfill $\Qed$ \\}
\title{Generalized tableaux over arbitrary digraphs and their associated differential equations}
\author{Luis Mart\'inez\thanks{Luis Mart\'inez is supported by the UPV/EHU and Basque Center of Applied Mathematics, grant US21/27.} University of the Basque Country (UPV/EHU),\\ Department of Mathematics, Spain\\ Antonio Vera L\'opez, University of the Basque Country (UPV/EHU),\\ Department of Mathematics, Spain\\ Antonio Vera P\'erez, PLD Space, Nicol\'as Cop\'ernico 7, Elche, Spain\\ Beatriz Vera P\'erez, Catholic University of Murcia, Murcia, Spain\\ Olga Basova, University of the Basque Country (UPV/EHU),\\ Department of Mathematics, Spain.}
\date{}
\begin{document}
\maketitle

\noindent{\bf Corresponding author:} Luis Mart\'inez\\ University of the Basque Country (UPV/EHU)\\ Faculty of Science and Technology, Department of Mathematics\\ Leioa, Spain, 48940\\ TEL: +34 946012651\\ Email: luis.martinez@ehu.eus

\begin{abstract} We revisit the concepts of acyclic orderings and number of acyclic orderings of acyclic digraphs in terms of dispositions and counters for arbitrary multidigraphs. We prove that when we add a sequence of nested directed paths to a directed graph there is a unique polynomial such that the generatrix function of the family of counters is the product of the polynomial and the exponential function. We give an application, by considering a kind of digraphs arranged in rows introduced by the authors in a previous paper, called dispositional digraphs, in the particular case in which the digraph has two rows, to obtain new families of linear differential equations of small order whose coefficients are polynomials of small degree which admit polynomial solutions. In particular, we obtain a new differential equation associated to Catalan numbers, and the corresponding associated polynomials, which are solution of this differential equation; we term them Catalan differencial equation and Catalan polynomials, respectively. We prove that the Catalan polynomials obtained when we connect the directed path to the second vertex of the lower row of the digraph are orthogonal polynomials for an appropriate weight function. We characterize the digraphs that maximize the counter of connected dispositional digraphs and we find a new differential equation associated to these digraphs. We introduce also dispositions and counters in any multidigraph with non-strict inequalities in the dispositions, and we find new differential equations associated to some of them.
\end{abstract}

\section{Introduction}
Young Tableaux are an important mathematical concept. They have applications to several areas of Combinatorics, since for instance Graph Theory (\cite{B1974},\cite{M2005},\cite{PPS2023}), Matroid Theory (\cite{H2008}), enumeration of matrices with non-negative entries (\cite{KN1970}), longest increasing subsequences (\cite{OP1989}), and tournaments (\cite{Y1980}).

They are applied also in pure Group Theory, for instance in Representation Theory of Groups (\cite{F1900},\cite{K1983},\cite{P1994}).

Their applications are not limited to pure mathematics, and Young Tableaux are used also in other fields of science such as chemistry (\cite{f2001},\cite{l2011}), nuclear physics (\cite{dd2014},\cite{MRA2010},\cite{nsl2005}), Population Genetics (\cite{ES2012}), or robotics (\cite{hck2011}).

In particular, Vera-L\'opez et al. introduced in \cite{VAB2014} a class of tableaux arranged in two rows related to a new generalization of Catalan numbers, and they applied their results to advance in the study of Higman's conjecture. Vera-L\'opez, Mart{\'\i}nez et al. enlarged the family of digraphs to which these tableaux can be associated in \cite{VM2017}, where they introduced dispositional digraphs and their corresponding counters, relating them also to Higman's conjecture.

The motivation to write this paper was threefold. First, we wanted to deep in the study of the mentioned dispositional digraphs, obtaining an structural characterization of the connected ones for which their counter takes the maximum value. Second, we wanted to generalize the kind of digraphs to which these techniques can be applied, admitting any multidigraph and admitting two scenarios, one in which we consider strict inequalities in the calculus of the counters, and another one in which we consider non-strict inequalities. Third, we wanted to associate differential equations to arbitrary digraphs, making the equation depend on the selection of the digraph and of a given vertex, and to give a method to obtain polynomial solutions of these equations.

Specifically, we present in this paper a connection of differential equations with tableaux in which the latest ones are given with the maximum generality, introducing these tableaux over arbitrary multidigraphs, and associating recursively to them a number that we term the counter of the multidigraph. The connection between both areas is established associating a polynomial to each simple digraph with a distinguished vertex, which we call the companion polynomial of the digraph (and consequently of the tableau) at the vertex.

This way we can obtain a library of differential equations with polynomial solutions that are obtained from the counters of the corresponding associated digraphs.

In Section \ref{fc} we first recall some basic definitions in graph theory, and after that we give the general setting of these generalized tableaux and some properties of counters, and we calculate the counter of some of these digraphs. We introduce also staircase digraphs and prove that their counters are maximal among the ones of connected dispositional digraphs.

In Section \ref{cpdigraph} we give our main result, that when we plug a sequence of directed paths to a vertex of the digraph there exists a polynomial, which we call the companion polynomial of the digraph at the vertex, so that the generatrix function of the sequence of counters is the product of the polynomial and the exponential function.

In Section \ref{deacp} we introduce the connection of companion polynomials of digraphs with differential equations, so that we can establish a library of differential equations that can be solved in a discrete way using the counters of directed graphs.

In Section \ref{disptwolines} we study a family of new differential equations corresponding to certain digraphs arranged in two rows, distinguishing among them a subfamily that we call Catalan differential equation, and in Section \ref{sdec} we analyze another family of new differential equations corresponding to extremal dispositional digraphs whose counters take the maximum value.

Finally, in Section \ref{nsdd} we present, again with the maximum generality, tableaux in which the dispositions correspond to non-strict inequalities, and we obtain new differential equations associated to some of their counters.  

\section{Dispositions and counters}\label{fc}
\begin{definition} A directed graph (or, more briefly, a digraph) is a pair $(V,A)$, where $V$ is a set and $A\subseteq X\times X$. The cardinalities of $V$ and $A$ are called the order and size, respectively, of the digraph. The elements of $V$ and $A$ are called vertices and arcs, respectively.
\end{definition}

More generally, if we take a multiset contained in $X\times X$, that is, if we contemplate the possibility of having multiple arcs, we will speak of a multidigraph. Pairs of the form $(x,x)$ are called loops. We will say that a multidigraph is simple if it has no loops and no arcs with multilicity greater than $1$. In this case we say, more briefly, that it is a simple digraph.

\begin{definition} A subdigraph of a digraph $(V,A)$ is a digraph $(V^{\prime},A^{\prime})$ with $V^{\prime}\subseteq V$ and $A^{\prime}\subseteq A$. The subdigraph of $(V,A)$ induced by a subset $V^{\prime}$ of $V$ is the subdigraph $(V^{\prime},A^{\prime})$ with $$A^{\prime}=\{(u,v)\mid u,v\in V^{\prime},(u,v)\in A\}.$$ We will say that a subdigraph of $(V,A)$ is an induced subgraph if it is induced by $V^{\prime}$ for some subset $V^{\prime}$ of $V$.
\end{definition}

\begin{definition} If $\frak G=(V,A)$ is a digraph and $v\in V$, then $\frak G-v$ will denote the subdigraph induced by $V-\{v\}$.
\end{definition}

\begin{definition} The reverse of a digraph $\frak G=(V,A)$ is the digraph $\frak G^r=(V,A^{\prime})$, where $$A^{\prime}=\{(v,u)\mid (u,v)\in A\}.$$
\end{definition}

\begin{definition} A digraph with vertex set $\{v_1,\dots,v_n\}$ and arc set $\{(v_1,v_2),\dots,(v_{n-1},v_n)\}$ is called a directed path of order $n$, and is denoted by $P_n$.
\end{definition}

\begin{definition} A digraph with vertex set $\{v_1,\dots,v_n\}$ and an empty arc set is called an empty digraph of order $n$, and is denoted by $E_n$.
\end{definition}

\begin{definition} A directed closed walk in a digraph $\frak G=(V,A)$ is a sequence of vertices and arcs $$v_1,(v_1,v_2),v_2,\dots,v_{n-1},(v_{n-1},v_n),v_n,(v_n,v_1),v_1$$
\end{definition}

In practice the arcs are understood and only the vertices are indicated, and the final vertex, which is the same as the initial one, is ommitted, and one writes just $v_1,\dots,v_n$ 

Obviously, any vertex $v_i$ can be taken to be the initial vertex of the directed closed walk.

\begin{definition}\label{wclw} Let $\frak G=(V,A)$ be a digraph, $\frak R$ an equivalence relation on $V$, $\overline V$ the quotient set formed by the equivalence classes and, for $v\in V$, let $\overline v$ be the equivalence class represented by $v$. The quotient digraph $\frak G/\frak R$ of $\frak G$ by the relation $\frak R$ is the digraph $(\overline V,\overline A)$, where $$\overline A=\{(\overline u,\overline v)\mid (u,v)\in A\},$$ in which the loops are removed and multiple arcs with multiplicity greater than $1$ are substituted by a single arc.
\end{definition}

\begin{proposition}\label{eredcw} Let $\frak G=(V,A)$ be a digraph. The relation on $V$ defined by $$u\frak R v\text{ if }u\text{ and }v\text{ are in a directed closed walk}$$ is an equivalence relation.
\end{proposition}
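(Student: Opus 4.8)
The plan is to verify the three defining properties of an equivalence relation—reflexivity, symmetry, and transitivity—of which only the last is substantial. Throughout I would use the remark recorded just before the statement, that in a directed closed walk any of its vertices may be chosen as the initial (and hence final) vertex. Reflexivity is handled by the convention that a single vertex $v$ is contained in the trivial closed walk consisting of $v$ alone, so that $v\,\frak R\,v$ for every $v\in V$ (if one insists that a closed walk traverse at least one arc, this degenerate case is simply adopted as a convention, as is needed for $\frak R$ to be reflexive at vertices lying on no cycle). Symmetry is immediate: the assertion that $u$ and $v$ both lie in some directed closed walk $W$ is symmetric in $u$ and $v$, so $u\,\frak R\,v$ yields $v\,\frak R\,u$ using the very same $W$.

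The heart of the argument, and the step I expect to require the most care, is transitivity. Suppose $u\,\frak R\,v$ and $v\,\frak R\,w$, witnessed by directed closed walks $W_1$ containing both $u$ and $v$ and $W_2$ containing both $v$ and $w$. Using that any vertex may serve as the starting point, I would rewrite $W_1$ as a closed walk beginning and ending at $v$, of the form $v,\dots,u,\dots,v$, and likewise rewrite $W_2$ as a closed walk beginning and ending at $v$, of the form $v,\dots,w,\dots,v$. Concatenating these two walks at their common vertex $v$ produces a single sequence of vertices and arcs that starts at $v$, traverses all of $W_1$ (visiting $u$), returns to $v$, traverses all of $W_2$ (visiting $w$), and returns once more to $v$.

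The one point that must be checked, and which constitutes the main obstacle, is that this concatenation is again a legitimate directed closed walk in the sense of the definition: the concatenated sequence still alternates vertices and arcs, every consecutive pair is a genuine arc of $\frak G$ (in particular the arc leaving $v$ into $W_2$ and the arc entering $v$ from $W_1$ are valid), and the sequence closes up on its initial vertex. Granting this, the resulting walk contains both $u$ and $w$, so $u\,\frak R\,w$, which establishes transitivity. Together the three properties show that $\frak R$ is an equivalence relation, completing the proof.
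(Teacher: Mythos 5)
Your proposal is correct and follows essentially the same route as the paper's proof: reflexivity via the trivial one-vertex walk, symmetry from the symmetry of membership in a single walk, and transitivity by re-rooting both witnessing closed walks at the common vertex $v$ and concatenating them there. Your extra verification that the concatenation is a legitimate directed closed walk is a finer level of detail than the paper records, but it is the same argument.
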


\begin{proof} If $v\in V$ then $v$ and $v$ are in the trivial directed closed walk formed just by $v$. If $u,v$ are in a directed closed walk, then also $v,u$ are in that directed closed walk. If $u,v$ are in a directed closed walk $\frak W_1$ and $v,w$ are in a directed closed walk $\frak W_2$, then $u,w$ are in the directed closed walk obtained concatenating both directed closed walks by the vertex $v$.
\end{proof}

If $\frak R$ is as in the previous proposition, it is evident that $\frak G/\frak R$ is a digraph without non-trivial directed closed walks. It is called in the literature the strong component digraph (\cite{BG2010}).

\begin{definition} If $\frak G_1=(V_1,A_1)$ and $\frak G_2=(V_2,A_2)$ are digraphs such that $\vert V_1\cap V_2\vert\leq 1$ then $$\frak G_1+\frak G_2=(V_1\cup V_2,A_1\cup A_2).$$
\end{definition}

Observe that the order of $\frak G_1+\frak G_2$ can be $\vert V_1\vert+\vert V_2\vert$ or $\vert V_1\vert+\vert V_2\vert-1$, depending on if $V_1$ and $V_2$ are disjoint or they share a vertex.

\begin{definition}\label{defdisposition} Let $\frak G=(V,A)$ be a multidigraph of order $n$. A disposition in $\frak G$ is a biyective mapping $f:V\longrightarrow\{1,\dots,n\}$ such that $f(v_1)>f(v_2)$ whenever $(v_1,v_2)\in A$.
\end{definition}

If $\frak G$ is as above, then the set of dispositions will be denoted by $\Sigma(\frak G)$, and its cardinality by $\sigma(\frak G)$. The number $\sigma(\frak G)$ will be called the counter of $\frak G$.

We will call Young tableaux over the multidigraph $\frak G$ to the triplet $$(\frak G,\Sigma(\frak G),\sigma(\frak G)).$$

Note that some authors (and in particular the authors of the present paper in \cite{VM2017}) use the reverse ordering $f(v_1)<f(v_2)$ in the definition of dispositions (in \cite{BG2010} Bang-Jensen and Gutin call them acyclic orderings and mention that they are often called topological sortings. It is worthy to note also that, when viewing an acyclic digraph as a partial order, acyclic orderings of the digraph correspond bijectively to \lq linear extensions\rq\ of the corresponding partial order). We think that Definition \ref{defdisposition} is more intuitive, since the arrow symbol in the arcs resembles the inequality sign. Besides, in the end there is no difference with respect to the value of the counter because, as we will see later in Proposition \ref{countreverse}, the counter of a digraph is the same as the one of its reverse.

If the multidigraph $\frak G=(V,A)$ has a loop at vertex $v$, then no disposition exists, because as $(v,v)\in A$, if $f$ was a disposition we would have $f(v)>f(v)$. Consequently, $\sigma(\frak G)=0$ in this case.

Also, if $a\in A$ is a multiarc with multiplicity $m>1$, then it is obvious that if we replace the $m$ parallel arcs by a single arc the counter does not change.

Hence, from now on we will consider simple digraphs, that is, multidigraphs without loops or multiple arcs (we will omit, by abuse of notation, the term \lq simple\rq, and we will say just \lq digraphs\rq).

\begin{definition} If $\frak G=(V,A)$ is a digraph, we will say that a vertex $u\in V$ is a minimum point of $\frak G$ if it does not exists any $v\in V-\{u\}$ with $(u,v)\in A$. We will denote by $\min(\frak G)$ the set of minimum points of $\frak G$. Similarly, we will say that a vertex $v\in V$ is a maximum point of $\frak G$ if it does not exists any $u\in V-\{v\}$ with $(u,v)\in A$. We will denote by $\max(\frak G)$ the set of maximum points of $\frak G$.
\end{definition}

In the theory of digraphs minimum points and maximum points are usually called sinks and sources, respectively (\cite{BG2010}).

\begin{theorem}\label{rec}
\begin{enumerate}
\item $$\sigma(\frak G)=\sum_{u\in\min(\frak G)}\sigma(\frak G-u).$$
\item $$\sigma(\frak G)=\sum_{v\in\max(\frak G)}\sigma(\frak G-v).$$
\end{enumerate}
\end{theorem}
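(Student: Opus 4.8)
The plan is to prove both statements by establishing a bijection between the set $\Sigma(\frak G)$ of dispositions and a disjoint union of sets of dispositions of smaller digraphs. Since the two parts are symmetric---part (2) is just part (1) applied to the reverse digraph $\frak G^r$, using that minimum points of $\frak G^r$ are maximum points of $\frak G$ and that $(\frak G-v)^r = \frak G^r - v$---it suffices to prove part (1) carefully and deduce part (2) as a consequence (this also anticipates Proposition~\ref{countreverse} asserting $\sigma(\frak G)=\sigma(\frak G^r)$).

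For part (1), the key observation is to classify each disposition $f\in\Sigma(\frak G)$ by which vertex it sends to the value $1$. If $f:V\to\{1,\dots,n\}$ is a disposition and $f(u)=1$, then $u$ must be a minimum point of $\frak G$: indeed, if there were some $v\in V-\{u\}$ with $(u,v)\in A$, the disposition condition would force $f(u)>f(v)\geq 1$, contradicting $f(u)=1$. This shows that the sets $\Sigma_u(\frak G):=\{f\in\Sigma(\frak G)\mid f(u)=1\}$, as $u$ ranges over $\min(\frak G)$, form a partition of $\Sigma(\frak G)$. Hence $\sigma(\frak G)=\sum_{u\in\min(\frak G)}|\Sigma_u(\frak G)|$, and it remains to show $|\Sigma_u(\frak G)|=\sigma(\frak G-u)$ for each minimum point $u$.

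First I would establish this last identity by exhibiting an explicit bijection $\Phi:\Sigma_u(\frak G)\to\Sigma(\frak G-u)$. Given $f\in\Sigma_u(\frak G)$, since $f(u)=1$ the restriction $f|_{V-\{u\}}$ takes values in $\{2,\dots,n\}$ bijectively, so I define $\Phi(f)(w)=f(w)-1$ for $w\in V-\{u\}$; this is a bijection from $V-\{u\}$ onto $\{1,\dots,n-1\}$. I would then verify $\Phi(f)$ is a disposition of $\frak G-u$: for any arc $(w_1,w_2)$ of $\frak G-u$ (equivalently, any arc of $\frak G$ with both endpoints in $V-\{u\}$, since $\frak G-u$ is the induced subdigraph), $f(w_1)>f(w_2)$ gives $\Phi(f)(w_1)>\Phi(f)(w_2)$. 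The inverse sends a disposition $g$ of $\frak G-u$ to the map extending $w\mapsto g(w)+1$ by $u\mapsto 1$, and I would check this extension is genuinely a disposition of $\frak G$, which is where the hypothesis that $u$ is a minimum point is used: the only arcs incident to $u$ in $\frak G$ are of the form $(v,u)$, and for these $g(v)+1>1$ holds automatically, so no arc constraint is violated.

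The main obstacle, though a mild one, is the bookkeeping needed to confirm that $\frak G-u$ being an \emph{induced} subdigraph means its arc set is exactly the set of arcs of $\frak G$ avoiding $u$, so that the disposition conditions for $\frak G-u$ and the restricted conditions for $\frak G$ coincide; and to handle the edge case cleanly when $\min(\frak G)=\emptyset$, which by the earlier remarks happens precisely when $\frak G$ contains a nontrivial directed closed walk (equivalently a loop once multiplicities are ignored), forcing both sides of the identity to be $0$. Once the bijection is verified, summing $|\Sigma_u(\frak G)|=\sigma(\frak G-u)$ over $u\in\min(\frak G)$ yields part (1), and the reversal argument gives part (2).
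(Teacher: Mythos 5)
Your proof of part (1) is correct and is essentially the paper's own argument: the paper proves a preparatory lemma decomposing $\Sigma(\frak G)$ as a disjoint union, over the extreme points, of shifted copies of $\Sigma(\frak G-u)$, and its operation $f+(u\to 1)$ (add one to every value and send $u$ to $1$) is exactly the inverse of your bijection $\Phi$; the only cosmetic difference is that the paper writes out the maximum-point case in detail and leaves the minimum-point case as ``similar'', while you do the opposite.

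The one point that needs repair is your deduction of part (2) from part (1). Applying part (1) to $\frak G^r$ yields $\sigma(\frak G^r)=\sum_{v\in\max(\frak G)}\sigma\bigl((\frak G-v)^r\bigr)$, and turning this into part (2) for $\frak G$ requires $\sigma(\frak H)=\sigma(\frak H^r)$ for every digraph $\frak H$ --- the two facts you cite ($\min(\frak G^r)=\max(\frak G)$ and $(\frak G-v)^r=\frak G^r-v$) do not suffice on their own. That counter-reversal identity is Proposition \ref{countreverse}, which the paper proves \emph{from} Theorem \ref{rec}, so invoking it here, as your parenthetical remark does, is circular in the paper's logical order. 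The fix is cheap: either prove the identity independently via the order-reversing involution $f\mapsto n+1-f$, which carries dispositions of $\frak G$ bijectively onto dispositions of $\frak G^r$, or simply run your partition argument symmetrically with the value $n$ in place of $1$ (the vertex sent to $n$ must be a maximum point, and deleting it needs no shift at all), which is precisely what the paper's lemma does.
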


We will use the following lemma to prove the theorem. Before stating the lemma, we will establish the following notation: if $\frak G=(V,A)$ is a digraph of order $n$, and if $v\in V$ is a maximum point and $f$ is a disposition of $\frak G-\{v\}$, then $f+(v\to n)$ is the mapping from $V$ to $\{1,\dots,n\}$ defined by $(f+(v\to n))(w)=f(w)$ whenever $w\not=v$ and $(f+(v\to n))(v)=n$, and if $D$ is a set of dispositions of $\frak G-\{v\}$, then $$D+(v\to n)=\{f+(v\to n)\mid f\in D\}.$$ We define similarly $f+(u\to 1)$ when $u$ is a minimum point by $(f+(u\to 1))(w)=f(w)+1$ whenever $w\not=u$ and $(f+(u\to 1))(u)=1$, and $$D+(u\to 1)=\{f+(u\to 1)\mid f\in D\}.$$

\begin{lemma}
If $\frak G$ is a digraph of order $n$, then
\begin{enumerate}
\item $$\Sigma(\frak G)=\dot{\bigcup}_{u\in\min(\frak G)}(\Sigma(\frak G-u)+(u\to 1)).$$
\item $$\Sigma(\frak G)=\dot{\bigcup}_{v\in\max(\frak G)}(\Sigma(\frak G-v)+(v\to n)).$$
\end{enumerate}
\end{lemma}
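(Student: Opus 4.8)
We need to prove two statements about dispositions. Let me focus on statement 2 (the maximum-point version), then 1 is analogous.

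We have a digraph $\frak{G} = (V,A)$ of order $n$. A maximum point $v$ is a source — no arc points into it. The claim is:
$$\Sigma(\frak{G}) = \dot\bigcup_{v \in \max(\frak{G})} (\Sigma(\frak{G}-v) + (v \to n)).$$

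Let me recall the operation: if $v$ is a max point and $f$ is a disposition of $\frak{G}-v$ (order $n-1$, so $f: V\setminus\{v\} \to \{1,\dots,n-1\}$), then $f + (v\to n)$ extends $f$ to all of $V$ by sending $v \mapsto n$ and keeping everything else fixed.

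**Key observations I'd establish.**

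1. *Where does $n$ go under a disposition of $\frak{G}$?* If $f \in \Sigma(\frak{G})$ and $f(v) = n$, then there's no $u$ with $f(u) > f(v) = n$, so there can be no arc $(u,v) \in A$ (which would force $f(u) > f(v)$). Thus $v$ must be a maximum point. So **the vertex receiving value $n$ under any disposition is necessarily a max point.** This is the crux: it tells us the union on the right covers everything.

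2. *The map is well-defined (lands in $\Sigma(\frak{G})$).* Take $v \in \max(\frak{G})$ and $f \in \Sigma(\frak{G}-v)$. I claim $g := f+(v\to n) \in \Sigma(\frak{G})$. It's a bijection $V \to \{1,\dots,n\}$. For the order condition, check each arc $(x,y)\in A$. Since $v$ is a max point, no arc has head $v$; so either the arc lies entirely in $\frak{G}-v$ (then $g(x)=f(x)>f(y)=g(y)$ by $f$ being a disposition), or $x = v$ (then $g(x)=n > g(y)$ automatically since $g(y)\le n-1$). So $g$ respects all arcs.

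**The proof plan.** I would prove set equality by double inclusion, and disjointness separately.

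*Right $\subseteq$ left:* Immediate from observation 2 above — each $f+(v\to n)$ is a genuine disposition of $\frak{G}$.

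*Left $\subseteq$ right:* Take any $g \in \Sigma(\frak{G})$. Let $v = g^{-1}(n)$, the vertex mapped to $n$. By observation 1, $v \in \max(\frak{G})$. Restrict $g$ to $V\setminus\{v\}$: this gives a bijection $f: V\setminus\{v\} \to \{1,\dots,n-1\}$ (since $v$ was the only vertex hitting $n$). It's a disposition of $\frak{G}-v$ because the arc condition is inherited (every arc of $\frak{G}-v$ is an arc of $\frak{G}$). And clearly $g = f + (v\to n)$. Hence $g$ lies in the term indexed by $v$.

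*Disjointness of the union:* Suppose $g$ lies in two terms, indexed by distinct max points $v, v'$. If $g \in \Sigma(\frak{G}-v)+(v\to n)$ then $g(v) = n$; similarly $g(v')=n$. Since $g$ is a bijection, $v = v'$, contradiction. So distinct index vertices give disjoint sets — justifying the $\dot\bigcup$.

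**Statement 1 (minimum points).** This is dual. Here $u$ is a sink. The operation $f + (u\to 1)$ shifts every other value up by $1$ and sends $u\mapsto 1$. The same argument works: the vertex mapped to $1$ under any disposition must be a min point (an arc $(u,w)$ would force $f(u)>f(w)\ge 1$, impossible if $f(u)=1$), the shifted restriction is a disposition of $\frak{G}-u$, and the value-$1$ vertex determines the term, giving disjointness. Alternatively, statement 1 follows from statement 2 applied to the reverse digraph, using that $\max(\frak{G}^r)=\min(\frak{G})$.

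**Main obstacle.** There's no deep difficulty — the content is entirely bookkeeping. The one subtle point worth stating carefully is observation 1: that the top (resp. bottom) value forces the vertex to be a source (resp. sink). This is what makes the union *exhaustive*; the well-definedness (observation 2) makes it *valid*. The shifting of indices by $+1$ in statement 1 is the only place to be careful that $f+(u\to 1)$ is still a bijection onto $\{1,\dots,n\}$ — worth double-checking but routine.

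**How the Theorem follows.** Once the Lemma is established, Theorem~\ref{rec} is immediate: taking cardinalities of a disjoint union gives $\sigma(\frak{G}) = \sum_{v\in\max(\frak{G})} |\Sigma(\frak{G}-v)+(v\to n)|$, and since $f \mapsto f+(v\to n)$ is injective (it's inverted by restriction to $V\setminus\{v\}$), we get $|\Sigma(\frak{G}-v)+(v\to n)| = \sigma(\frak{G}-v)$, yielding the stated recurrence.
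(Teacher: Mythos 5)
Your proposal is correct and follows essentially the same route as the paper's proof: show each $f+(v\to n)$ is a disposition since $v$ is a source, show every disposition of $\frak G$ arises this way by restricting at the unique vertex mapped to $n$ (which must be a maximum point), and derive disjointness from injectivity of dispositions. Your extra care about the $+1$ shift in the minimum-point case and the alternative via the reverse digraph are fine refinements of what the paper leaves as ``similar.''
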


\begin{proof} We will prove it for the set of maximum points, being the proof similar for minimum points. If $f\in \Sigma(\frak G-v)$, then $f+(v\to n)\in\Sigma(\frak G)$ because, as $v$ is a maximum point, $f+(v\to n)$ is a disposition in $\frak G$. Reciprocally, if $f$ is a disposition in $\frak G$, as $f$ is biyective, there is a unique vertex $v$ such that $f(v)=n$, and $v$ must be obviously a maximum point. Besides, $f=f^{\star}+(v\to n)$, where $f^{\star}$ is the restriction of $f$ to $V-\{v\}$. This proves that $$\Sigma(\frak G)=\bigcup_{v\in\max(\frak G)}(\Sigma(\frak G-v)+(v\to n)).$$ It is obvious that the union is disjoint, because if $f\in(\Sigma(\frak G-v_1)+(v_1\to n))\cap(\Sigma(\frak G-v_2)+(v_2\to n))$ with $v_1\not=v_2$, then $f(v_1)=n$ and $f(v_2)=n$, and this contradicts the injectivity of $f$.
\end{proof}

Now we will prove the theorem:

\begin{proof} It is a direct consequence of the lemma (which allows us in passing to obtain all the dispositions in an iterative way), using the fact that the cardinality of a disjoint union of finite sets is the sum of the cardinalities of the sets.
\end{proof}

\begin{proposition} If $\frak G$ is a digraph, then $\sigma(\frak G)>0$ if and only if $\frak G$ has no non-trivial directed closed walk.
\end{proposition}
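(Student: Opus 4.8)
The plan is to prove the two implications separately. One direction is immediate and handles the ``only if'': suppose $\frak G$ contains a non-trivial directed closed walk $v_1,v_2,\dots,v_m$, i.e. the arcs $(v_1,v_2),(v_2,v_3),\dots,(v_{m-1},v_m),(v_m,v_1)$ all belong to $A$ (with $m\geq 2$, since loops are excluded). If $f$ were any disposition, then the defining property $f(x)>f(y)$ for $(x,y)\in A$ would force $f(v_1)>f(v_2)>\cdots>f(v_m)>f(v_1)$, an absurdity. Hence $\Sigma(\frak G)=\emptyset$ and $\sigma(\frak G)=0$. Contrapositively, $\sigma(\frak G)>0$ rules out every non-trivial directed closed walk.

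For the converse I would argue by induction on the order $n$ of $\frak G$, using the recursion of Theorem~\ref{rec}. The base case $n=1$ is clear, as the unique map sending the single vertex to $1$ is a disposition, so $\sigma(\frak G)=1$. For the inductive step the crucial claim is that any digraph with no non-trivial directed closed walk possesses a minimum point, i.e. $\min(\frak G)\neq\emptyset$. Granting this, pick $u\in\min(\frak G)$ and pass to $\frak G-u$; every directed closed walk of $\frak G-u$ is also one of $\frak G$, so $\frak G-u$ again has no non-trivial directed closed walk, and the induction hypothesis gives $\sigma(\frak G-u)>0$. Since by Theorem~\ref{rec} the number $\sigma(\frak G)=\sum_{w\in\min(\frak G)}\sigma(\frak G-w)$ is a sum of nonnegative integers containing the strictly positive term $\sigma(\frak G-u)$, we conclude $\sigma(\frak G)>0$.

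The step carrying the real content, and the one I expect to be the main obstacle, is the existence of a minimum point in an acyclic digraph. I would establish it by a maximal-path argument. Choose a directed path $v_0,v_1,\dots,v_k$ on distinct vertices whose length $k$ is as large as possible; such a path exists because $V$ is finite. If $v_k$ were not a minimum point there would be an arc $(v_k,w)$ with $w\neq v_k$ (no loop is present). Were $w\notin\{v_0,\dots,v_k\}$, the path could be prolonged by $w$, contradicting maximality; and were $w=v_i$ for some $i<k$, then $v_i,v_{i+1},\dots,v_k$ closed up by the arc $(v_k,v_i)$ would be a non-trivial directed closed walk, against the hypothesis. These cases are exhaustive, so $v_k$ must be a minimum point, which completes the induction. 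As an alternative one could instead invoke the strong component digraph $\frak G/\frak R$ arising from Proposition~\ref{eredcw}, but the direct maximal-path argument is self-contained and marginally shorter.
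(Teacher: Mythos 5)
Your proof is correct and takes essentially the same approach as the paper: the forward implication via the contradiction $f(v_1)>f(v_1)$, and the converse by induction on the order of $\frak G$ using Theorem~\ref{rec} together with the nonemptiness of $\min(\frak G)$. The only difference is that you explicitly justify the existence of a minimum point with a maximal-path argument, a step the paper asserts without proof; your argument for it is sound.
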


\begin{proof} If $\frak G$ had a non-trivial directed closed walk $v_1,\dots,v_n$ and $f$ was a disposition we would conclude that $f(v_1)>f(v_1)$, which leads to a contradiction, and thus $\sigma(\frak G)=0$. Conversely, we will prove by induction on the order $n$ of $\frak G$ that if $\frak G$ has no non-trivial directed closed walk, then $\sigma(\frak G)>0$. It is trivially true if $n=1$, because in that case $\sigma(\frak G)=1$. Let us suppose that it is true when the order is $n-1$, and let $n>1$. Since $\frak G$ has no non-trivial directed closed walk, $\min(\frak G)\not=\emptyset$, and now the result follows from Theorem \ref{rec} by using the inductive hypothesis.
\end{proof}

If $\frak G$ has no non-trivial directed closed walk, then the transitive clausure of the relation associated to the graph is a partial order, and the dispositions and counters can be interpreted in terms of $(P,\omega)$-partitions (\cite{ch2023},\cite{S1971}).

\begin{proposition}\label{countersubdigraph} If $\frak G^{\prime}=(V,A^{\prime})$ is a subdigraph of a digraph $\frak G=(V,A)$, then $$\sigma(\frak G)\leq\sigma(\frak G^{\prime}).$$
\end{proposition}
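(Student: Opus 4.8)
The plan is to show directly that $\Sigma(\frak G)\subseteq\Sigma(\frak G')$, after which the asserted inequality on cardinalities is immediate. First I would note that $\frak G=(V,A)$ and $\frak G'=(V,A')$ share the same vertex set $V$, of some order $n$, so by Definition~\ref{defdisposition} a disposition of either digraph is a bijective map $f:V\to\{1,\dots,n\}$; the two notions of disposition differ only in the collection of order constraints that such an $f$ is required to respect.

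Next I would take an arbitrary $f\in\Sigma(\frak G)$ and check that $f\in\Sigma(\frak G')$. By definition $f$ is a bijection satisfying $f(v_1)>f(v_2)$ for every $(v_1,v_2)\in A$. Since $\frak G'$ is a subdigraph of $\frak G$ we have $A'\subseteq A$, so every arc of $\frak G'$ is in particular an arc of $\frak G$; hence the inequality $f(v_1)>f(v_2)$ holds for every $(v_1,v_2)\in A'$ as well. As $f$ is already a bijection onto $\{1,\dots,n\}$, this is precisely the condition for $f$ to be a disposition of $\frak G'$, giving $f\in\Sigma(\frak G')$ and therefore $\Sigma(\frak G)\subseteq\Sigma(\frak G')$.

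Finally, since both sets of dispositions are finite, the inclusion yields
$$\sigma(\frak G)=\vert\Sigma(\frak G)\vert\le\vert\Sigma(\frak G')\vert=\sigma(\frak G'),$$
which is the claim.

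I do not anticipate a genuine obstacle here: the entire content is the observation that passing to a subdigraph on the same vertex set only deletes order constraints, and deleting constraints can only enlarge (never shrink) the family of admissible bijective orderings. The one point I would be careful to state explicitly is that $\frak G$ and $\frak G'$ have the same vertex set, so that in both cases a disposition is a bijection onto the identical target $\{1,\dots,n\}$ and the comparison of the two families $\Sigma(\frak G)$ and $\Sigma(\frak G')$ is legitimate.
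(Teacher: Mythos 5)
Your proof is correct and takes essentially the same approach as the paper: the paper's one-line argument is precisely that any disposition of $\frak G$ is also a disposition of $\frak G^{\prime}$, and you have simply spelled out the containment $\Sigma(\frak G)\subseteq\Sigma(\frak G^{\prime})$ in full detail. Nothing is missing.
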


\begin{proof} Let $n$ be the order of $\frak G$. If $f:V\longrightarrow\{1,\dots,n\}$ is a disposition in $\frak G$, then $f$ is also a disposition in $\frak G^{\prime}$.
\end{proof}

\begin{proposition}\label{contpath}$\sigma(P_n)=1\ \forall n\in\Bbb N$.
\end{proposition}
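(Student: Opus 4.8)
The plan is to argue that the arcs of $P_n$ impose a complete chain of strict inequalities on the values taken by any disposition, leaving exactly one admissible bijection. Concretely, the arc $(v_i,v_{i+1})$ forces $f(v_i)>f(v_{i+1})$ for each $i\in\{1,\dots,n-1\}$, so any disposition $f$ must satisfy the full chain $f(v_1)>f(v_2)>\cdots>f(v_n)$. Since $f$ is a bijection onto $\{1,\dots,n\}$, a strictly decreasing sequence of length $n$ with values in an $n$-element set is forced to be $f(v_i)=n+1-i$, with no remaining freedom. Hence $\Sigma(P_n)$ is a singleton and $\sigma(P_n)=1$.

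Alternatively, and more in keeping with the recursive machinery just established, I would proceed by induction on $n$ using Theorem~\ref{rec}. The base case $n=1$ gives $\sigma(P_1)=1$, since there is a unique bijection from a one-element set to $\{1\}$. For the inductive step I would observe that $v_n$ is the unique minimum point of $P_n$: it is the only vertex carrying no outgoing arc, whereas every other $v_i$ is the tail of the arc $(v_i,v_{i+1})$. Deleting it yields $P_n-v_n=P_{n-1}$, so Part~1 of Theorem~\ref{rec} gives $\sigma(P_n)=\sigma(P_{n-1})$, and the inductive hypothesis closes the argument. Symmetrically, one could single out the unique maximum point $v_1$ and invoke Part~2.

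There is essentially no serious obstacle here; the only point demanding a little care is the bookkeeping of the convention in Definition~\ref{defdisposition}, namely that the arc direction corresponds to the \emph{larger} value, so that the sink $v_n$ receives $1$ and the source $v_1$ receives $n$. Getting this orientation right is what guarantees that the single chain of inequalities is mutually consistent and that the deletion in the inductive step really produces the shorter directed path $P_{n-1}$ rather than a disconnected digraph.
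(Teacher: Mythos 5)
Your proposal is correct, and your second (inductive) argument is exactly the paper's proof: induction on $n$ via Theorem~\ref{rec}, using that deleting the unique sink of $P_n$ yields $P_{n-1}$. Your first argument---that the arcs force the full chain $f(v_1)>\cdots>f(v_n)$, which a bijection onto $\{1,\dots,n\}$ can satisfy in only one way---is an equally valid, more elementary direct proof that the paper does not spell out.
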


\begin{proof} We will prove it by induction in $n$. It is obvious for $n=1$. Let us suppose that it is true for $n-1$. By Theorem \ref{rec}, $\sigma(P_k)=\sigma(P_{k-1})$, and now the result follows from the inductive hypothesis.
\end{proof}

\begin{proposition}\label{countreverse} If $\frak G$ is a digraph, then $\sigma(\frak G)=\sigma(\frak G^r)$.
\end{proposition}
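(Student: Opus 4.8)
The plan is to exhibit an explicit bijection between the sets of dispositions $\Sigma(\frak G)$ and $\Sigma(\frak G^r)$, from which the equality of their cardinalities, and hence $\sigma(\frak G)=\sigma(\frak G^r)$, follows at once. The guiding idea is that reversing all the arcs should correspond, at the level of dispositions, to reversing the linear order on $\{1,\dots,n\}$, that is, to replacing each value $k$ by its complement $n+1-k$.

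Concretely, let $n$ be the order of $\frak G=(V,A)$, and for every disposition $f\in\Sigma(\frak G)$ define the mapping $\Phi(f):V\to\{1,\dots,n\}$ by $\Phi(f)(v)=n+1-f(v)$. First I would check that $\Phi(f)$ is again a bijection: this is immediate, since $k\mapsto n+1-k$ is a bijection of $\{1,\dots,n\}$ onto itself and $f$ is bijective by hypothesis. Next I would verify that $\Phi(f)$ is a disposition of $\frak G^r$ in the sense of Definition \ref{defdisposition}. Recall that an arc of $\frak G^r$ is a pair $(v,u)$ with $(u,v)\in A$; for such a pair the condition $f(u)>f(v)$, which holds because $f\in\Sigma(\frak G)$, gives $\Phi(f)(v)=n+1-f(v)>n+1-f(u)=\Phi(f)(u)$, which is precisely the inequality required of a disposition of $\frak G^r$ along the arc $(v,u)$. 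Hence $\Phi$ maps $\Sigma(\frak G)$ into $\Sigma(\frak G^r)$.

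Finally I would argue that $\Phi$ is a bijection. The cleanest way is to note that the same construction, applied to $\frak G^r$, produces a map $\Psi:\Sigma(\frak G^r)\to\Sigma((\frak G^r)^r)=\Sigma(\frak G)$ given by $\Psi(g)(v)=n+1-g(v)$, where I use that $(\frak G^r)^r=\frak G$. Since $n+1-(n+1-k)=k$, both composites $\Psi\circ\Phi$ and $\Phi\circ\Psi$ are the identity, so $\Phi$ is a bijection and $\sigma(\frak G)=|\Sigma(\frak G)|=|\Sigma(\frak G^r)|=\sigma(\frak G^r)$.

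I do not expect any genuine obstacle here; the only point demanding a little care is bookkeeping the direction of the inequalities when passing from $A$ to the reversed arc set, so that the complemented values $n+1-f(v)$ satisfy the defining inequality of $\Sigma(\frak G^r)$ rather than its opposite. Everything else is routine verification.
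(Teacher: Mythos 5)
Your proof is correct, but it takes a genuinely different route from the paper. The paper proves the proposition by induction on the order of $\frak G$, applying the recurrence of Theorem \ref{rec} together with the observation that $\min(\frak G)=\max(\frak G^r)$ and $\max(\frak G)=\min(\frak G^r)$; this is essentially a one-line argument given the machinery already built, but it only yields equality of the counters. Your argument instead exhibits the explicit complementation bijection $\Phi(f)(v)=n+1-f(v)$ between $\Sigma(\frak G)$ and $\Sigma(\frak G^r)$, and your verification is sound: for an arc $(v,u)$ of $\frak G^r$, i.e.\ $(u,v)\in A$, the hypothesis $f(u)>f(v)$ gives $n+1-f(v)>n+1-f(u)$ as required, and the involutive character of $k\mapsto n+1-k$ together with $(\frak G^r)^r=\frak G$ makes $\Phi$ a bijection. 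What your approach buys is strictly more information: a constructive, set-level correspondence between the two sets of dispositions rather than a mere equality of cardinalities, and independence from Theorem \ref{rec} (so it would work even if one had not yet established the sink/source recurrence). What the paper's approach buys is brevity and uniformity, since it reuses the same inductive template that drives most of the counter identities in Section \ref{fc}. Both are complete proofs; yours is arguably the more canonical one for this particular statement.
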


\begin{proof} It can be immediately proved by induction on the order of $\frak G$, using Theorem \ref{rec} and having into account that $\min(\frak G)=\max(\frak G^r)$ and $\max(\frak G)=\min(\frak G^r)$.
\end{proof}

\begin{theorem}\label{contcc} If $\frak G=(V,A)$ is a digraph and $\frak G_1=(V_1,A_1),\dots,\frak G_r=(V_r,A_r)$ are the connected components of the underlying undirected graph, of orders $n_1,\dots,n_r$, respectively, then $$\sigma(\frak G)=\binom{n_1+\dots+n_r}{n_1,\dots,n_r}\sigma(\frak G_1)\cdots \sigma(\frak G_r).$$
\end{theorem}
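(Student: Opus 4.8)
The plan is to set up a bijection that splits each disposition of $\frak G$ into the combinatorial data of how the values $\{1,\dots,n\}$ (with $n=n_1+\dots+n_r$) are distributed among the components, together with a disposition of each component. The crucial structural fact is that, since $\frak G_1,\dots,\frak G_r$ are the connected components of the underlying undirected graph, no arc of $\frak G$ joins vertices lying in two different components; hence every inequality $f(v_1)>f(v_2)$ imposed by the definition of a disposition concerns a pair of vertices belonging to the same $\frak G_i$, and the conditions defining $\Sigma(\frak G)$ decouple completely across the components.

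First I would make the distribution of values precise. Given $f\in\Sigma(\frak G)$, put $S_i=f(V_i)$. Since $f$ is a bijection onto $\{1,\dots,n\}$ and the $V_i$ partition $V$, the sets $S_1,\dots,S_r$ form an ordered partition of $\{1,\dots,n\}$ into blocks of sizes $n_1,\dots,n_r$. For each $i$ let $\phi_i\colon S_i\to\{1,\dots,n_i\}$ be the unique increasing bijection (sending the $j$-th smallest element of $S_i$ to $j$), and set $g_i=\phi_i\circ(f|_{V_i})$. I claim $g_i\in\Sigma(\frak G_i)$: it is a bijection from $V_i$ onto $\{1,\dots,n_i\}$, and because $\phi_i$ is strictly increasing it preserves and reflects all the strict inequalities, so $f(v_1)>f(v_2)$ holds exactly when $g_i(v_1)>g_i(v_2)$; thus $g_i$ respects the arcs of $\frak G_i$ precisely because $f$ respects those same arcs. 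This produces a well-defined map $f\mapsto (S_1,\dots,S_r;\,g_1,\dots,g_r)$.

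Next I would exhibit the inverse and thereby prove bijectivity. Given an ordered partition $(S_1,\dots,S_r)$ of $\{1,\dots,n\}$ with $|S_i|=n_i$ and dispositions $g_i\in\Sigma(\frak G_i)$, define $f$ on each $V_i$ by $f|_{V_i}=\phi_i^{-1}\circ g_i$, where $\phi_i$ is again the increasing bijection attached to $S_i$. Then $f$ is a bijection $V\to\{1,\dots,n\}$, and by the same order-preservation argument it lies in $\Sigma(\frak G)$, using once more that there are no cross-component arcs. The two constructions are mutually inverse, so the correspondence is a bijection. Counting the right-hand side then gives the number of ordered partitions of $\{1,\dots,n\}$ into blocks of sizes $n_1,\dots,n_r$, which is the multinomial coefficient $\binom{n_1+\dots+n_r}{n_1,\dots,n_r}$, times the number of choices of $(g_1,\dots,g_r)$, which is $\sigma(\frak G_1)\cdots\sigma(\frak G_r)$, yielding the claimed formula.

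The only genuinely delicate point is the standardization step, namely checking that composing with the increasing bijection $\phi_i$ really carries dispositions to dispositions in both directions. Everything hinges on $\phi_i$ being order-preserving, so that the defining strict inequalities are transported faithfully; once this is granted, the absence of arcs between distinct components makes the decoupling, and hence the factorization of the count, immediate. An alternative, slightly less transparent route would be induction on $r$: first establish the two-component identity $\sigma(\frak G_1+\frak G_2)=\binom{n_1+n_2}{n_1}\sigma(\frak G_1)\sigma(\frak G_2)$ by the shuffling argument above, and then telescope using $\binom{n_1+\dots+n_r}{n_1,\dots,n_r}=\binom{n_1+\dots+n_r}{n_r}\binom{n_1+\dots+n_{r-1}}{n_1,\dots,n_{r-1}}$; but the direct bijection seems cleaner and I would prefer it.
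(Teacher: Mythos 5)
Your proposal is correct and takes essentially the same route as the paper: both decompose a disposition $f$ of $\frak G$ according to the ordered partition $S_i=f(V_i)$ of the value set together with a disposition on each component, and then count partitions times component dispositions. The only difference is bookkeeping at the standardization step --- the paper sidesteps it by generalizing dispositions to take values in an arbitrary $n$-element set of naturals (invoking Theorem~\ref{rec} for invariance of the counter), whereas you transport values explicitly via the increasing bijections $\phi_i$, which if anything makes that point more transparent.
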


\begin{proof} First, let us observe that, if we generalize Definition \ref{defdisposition} changing $\{1,\dots,n\}$ to any set $S$ of $n$ natural numbers, where $n$ is the order of $\frak G$, then the counter $\sigma(\frak G)$ does not change, because Theorem \ref{rec} continues being true. Let us put $S=\{1,\dots,n_1+\dots+n_r\}$. If $f:V\longrightarrow S$ is a disposition in $\frak G$ and if $S_i=f(V_i)$ for $i=1,\dots,r$, then $f_{\vert V_i}$ is a disposition in $\frak G_i$ for every $i$, and the sets $S_1,\dots,S_r$ form a partition of $S$. Reciprocally, if $S_1,\dots,S_r$ is a partition of $S$ with $\vert S_i\vert=n_i\ \forall i\in\{1,\dots,r\}$, and if $f_i:V_i\longrightarrow S_i$ is a disposition in $\frak G_i\ \forall i\in\{1,\dots,r\}$, then it is obvious that the mapping $f:V\longrightarrow S$ defined by $f(v)=f_i(v)$ where $i$ is the only index satisfying that $v\in S_i$, is a disposition in $\frak G$. Now the conclusion of the theorem follows immediately.
\end{proof}

\begin{corollary}\label{counten} $\sigma(E_n)=n!\ \forall n\in\Bbb N$.
\end{corollary}

\begin{proof} The digraph $E_n$ has $n$ connected components, and the counter of each one of them is $1$.
\end{proof}

\begin{definition} Let $S=\{\frak G_i\}_{i\in\Bbb Z^+}$, where $\Bbb Z^+=\{i\in\Bbb Z\mid i\geq 0\}$, be a sequence of digraphs such that $\frak G_i$ is a subdigraph of $\frak G_{i+1}$ for every $i$. We will call generatrix function of $S$ to $$F_S(X)=\sum_{i=0}^{\infty}\frac{\sigma(\frak G_i)}{i!}X^i.$$
\end{definition}

\begin{proposition}\label{generatrixpath} $F_{\{P_{i+1}\}_{i\in\Bbb Z^+}}(X)=\exp(X)$.
\end{proposition}

\begin{proof} Use Proposition \ref{contpath} and the MacLaurin expansion of the exponential function.
\end{proof}

\begin{definition} A tree is a conected acyclic graph. A rooted tree is a tree in which a vertex is selected. This vertex is called the root of the rooted tree.
\end{definition}

We can give a natural orientation to the edges of a rooted tree, where the origin of the arc is the vertex of the edge that is closest to the root, so that we can associate a digraph to the tree. If $\frak T$ is a rooted tree with root $v$, then $\frak T-v$ is the disjoint union of $k$ rooted trees $\frak T_1,\dots,\frak T_k$, where $k$ is the degree of $v$ and where, if $\{v_1,\dots,v_k\}$ is the neighbourhood of $v$ in $\frak T$, then $v_i$ is the root of $\frak T_i$. We will call $\frak T_1,\dots,\frak T_k$ the descendants of $\frak T$. 

\begin{proposition} If $\frak T$ is a rooted tree with $k$ descendants $\frak T_1,\dots,\frak T_k$ of orders $n_1,\dots,n_k$, respectively, then $$\sigma(\frak T)=\binom{n_1+\dots+n_k}{n_1,\dots,n_k}\sigma(\frak T_1)\cdot\dots\cdot\sigma(\frak T_k).$$
\end{proposition}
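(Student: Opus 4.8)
The plan is to exhibit the root as the unique maximum point of $\frak T$ and then reduce the statement to the two recursion tools already available, namely Theorem~\ref{rec} and Theorem~\ref{contcc}.

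First I would examine the orientation. By construction the arcs of the digraph associated to $\frak T$ point away from the root: the origin of each arc is the endpoint of the edge closer to the root $v$. Consequently every vertex other than $v$ has exactly one incoming arc, coming from its parent, whereas $v$ has none. Hence $v$ is the only vertex with no incoming arc, so $\max(\frak T)=\{v\}$.

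Next I would apply the second part of Theorem~\ref{rec}. Since the set of maximum points is the singleton $\{v\}$, the sum over $\max(\frak T)$ collapses to a single summand, yielding
$$\sigma(\frak T)=\sigma(\frak T-v).$$
It then remains to compute $\sigma(\frak T-v)$. Removing the root from the tree leaves precisely the disjoint union of the subtrees rooted at the children of $v$, that is, the descendants $\frak T_1,\dots,\frak T_k$; these are therefore the connected components of the underlying undirected graph of $\frak T-v$, of orders $n_1,\dots,n_k$. Applying Theorem~\ref{contcc} to $\frak T-v$ gives
$$\sigma(\frak T-v)=\binom{n_1+\dots+n_k}{n_1,\dots,n_k}\sigma(\frak T_1)\cdots\sigma(\frak T_k),$$
and combining the two displayed equalities finishes the proof.

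I do not expect a genuine obstacle here: the argument is a direct composition of the two previous theorems. The only points requiring care are confirming, from the chosen orientation convention, that the root is a source (a maximum point in the paper's terminology) rather than a sink, so that the sum in Theorem~\ref{rec} reduces to one term, and checking that $\frak T-v$ decomposes exactly into the descendant trees so that Theorem~\ref{contcc} applies verbatim.
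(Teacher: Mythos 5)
Your proof is correct and follows exactly the paper's own argument: the root $v$ is the unique maximum point, so Theorem~\ref{rec} collapses to $\sigma(\frak T)=\sigma(\frak T-v)$, and Theorem~\ref{contcc} applied to the descendants $\frak T_1,\dots,\frak T_k$ (the connected components of $\frak T-v$) yields the multinomial formula. You have merely spelled out the details that the paper's one-line proof leaves implicit, including the correct check that the orientation convention makes the root a source.
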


\begin{proof} Use Theorem \ref{rec} and, after that, Theorem \ref{contcc} with the digraph associated to $\frak T-v$.
\end{proof}

\begin{corollary} Let $q\geq 2$ be a natural number. Let $\frak T^{(q)}$ denote the $q$-ary tree and, for $n\in\Bbb N$, let $\frak T^{(q)}_n$ denote the $n$-level of $\frak T^{(q)}$. Then, $$\sigma(\frak T^{(q)}_1)=1$$ and $$\sigma(\frak T^{(q)}_{n+1})=\binom{\frac{q(q^n-1)}{q-1}}{\frac{q^n-1}{q-1},\dots,\frac{q^n-1}{q-1}}\sigma(\frak T^{(q)}_n)^q$$ for $n\in\Bbb N$.
\end{corollary}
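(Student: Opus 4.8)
The plan is to reduce the statement to the proposition on rooted trees proved immediately above, exploiting the self-similar structure of $q$-ary trees. First I would record the order of $\frak T^{(q)}_n$: its successive levels contain $1, q, q^2, \dots, q^{n-1}$ vertices, so its order equals $1 + q + \dots + q^{n-1} = \frac{q^n-1}{q-1}$. This quantity is exactly what will occupy the lower entries of the multinomial coefficient.

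The base case is immediate: $\frak T^{(q)}_1$ is the root alone, a single-vertex digraph, which admits exactly one disposition, so $\sigma(\frak T^{(q)}_1) = 1$.

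For the recurrence, the key structural observation is that the root $v$ of $\frak T^{(q)}_{n+1}$ has exactly $q$ children, and that $\frak T^{(q)}_{n+1} - v$ splits into $q$ disjoint rooted subtrees, each rooted at a child of $v$ and each a copy of $\frak T^{(q)}_n$ (it spans the top $n$ of the $n+1$ levels). Thus $\frak T^{(q)}_{n+1}$ is a rooted tree with $k = q$ descendants $\frak T_1, \dots, \frak T_q$, all equal to $\frak T^{(q)}_n$ and hence all of order $n_1 = \dots = n_q = \frac{q^n-1}{q-1}$. Invoking the preceding proposition directly gives
$$\sigma(\frak T^{(q)}_{n+1}) = \binom{n_1 + \dots + n_q}{n_1, \dots, n_q} \prod_{i=1}^{q} \sigma(\frak T_i),$$
and it remains only to substitute $n_1 + \dots + n_q = q \cdot \frac{q^n-1}{q-1} = \frac{q(q^n-1)}{q-1}$ at the top and to replace the product by $\sigma(\frak T^{(q)}_n)^q$, recovering the asserted formula.

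The argument is essentially bookkeeping, so I do not anticipate a genuine obstacle; the points that deserve care are purely structural. One must verify that deletion of the root produces precisely $q$ disjoint copies of $\frak T^{(q)}_n$ with the correct induced orientation (arcs pointing away from the root, consistent between $\frak T^{(q)}_{n+1}$ and each descendant), so that the descendant counters are genuinely $\sigma(\frak T^{(q)}_n)$. It is also worth noting that the top entry of the multinomial is the order of $\frak T^{(q)}_{n+1} - v$, namely $\frac{q(q^n-1)}{q-1}$, and not the order $\frac{q^{n+1}-1}{q-1}$ of $\frak T^{(q)}_{n+1}$ itself; this accounts for the precise form of the numerator and is the one spot where a careless order count could go wrong.
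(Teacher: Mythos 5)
Your proposal is correct and follows exactly the route the paper intends: the corollary is deduced directly from the preceding proposition on rooted trees, deleting the root of $\frak T^{(q)}_{n+1}$ to obtain $q$ disjoint copies of $\frak T^{(q)}_n$, each of order $\frac{q^n-1}{q-1}$, and substituting into the multinomial formula. Your remark that the top entry of the multinomial is the order of $\frak T^{(q)}_{n+1}-v$ rather than of $\frak T^{(q)}_{n+1}$ itself is accurate and is precisely the bookkeeping point on which the stated formula depends.
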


For $q=2$ the numbers obtained in the previous corollary correspond to sequence A056972 in Sloane's on-line encyclopedia of integer sequences (\cite{OEIS}).

For $q=3$ they correspond to sequence A273723.

We introduce now a family of directed graphs, the staircase digraphs:

\begin{definition} Let $n\in\Bbb N$. We define the staircase digraph $\frak S_n$ to be the digraph with vertex set $V_n=\{v_1,\dots,v_n\}$ and arc set $A_n=\{(v_1,v_2),(v_3,v_2),(v_3,v_4),(v_5,v_4),\dots\}=\{(v_{2i+1},v_{2i+2})\mid 0\leq i\leq\lfloor\frac{n-2}{2}\rfloor\}\cup\{(v_{2i-1},v_{2i-2})\mid 2\leq i\leq\lfloor\frac{n+1}{2}\rfloor\}$.
\end{definition}

We will denote by $s_n$ the counter of $\frak S_n$.

We show the staircase digraph $\frak S_5$ in the following figure:

\begin{figure}[H]
\caption{Staircase digraph of order $5$}
\includegraphics[width=5 in]{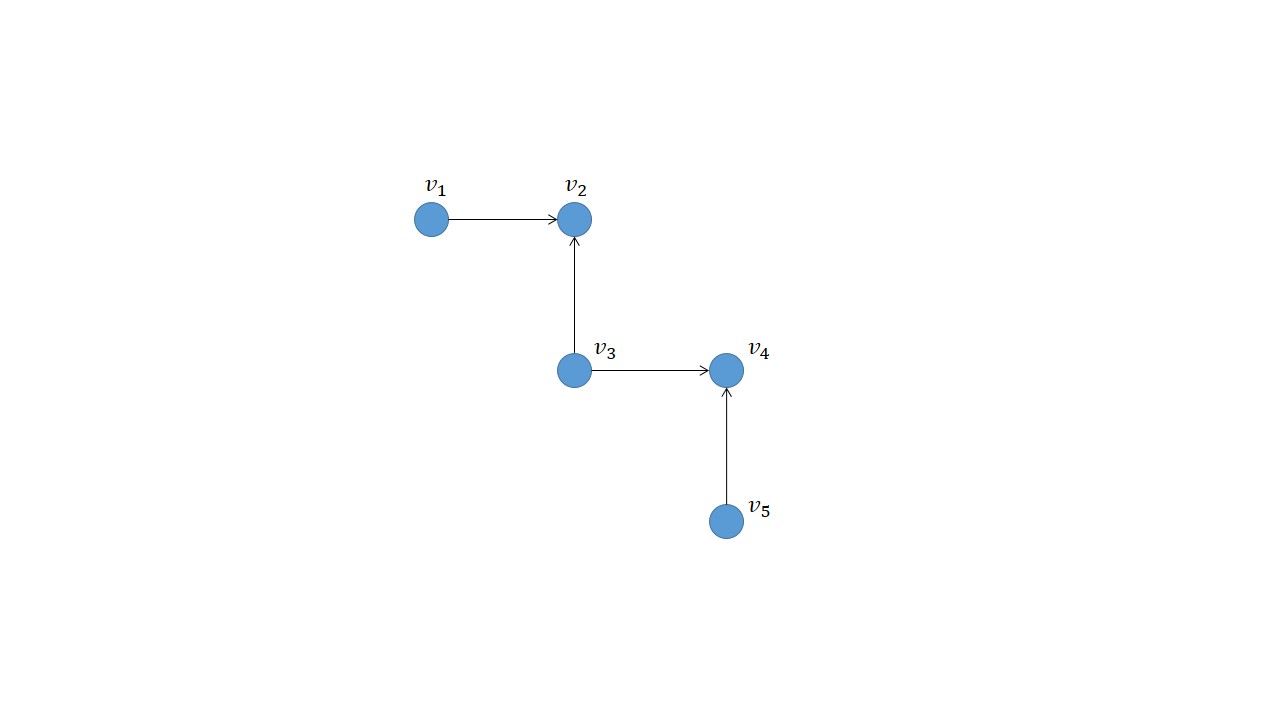}
\end{figure}

The following result holds trivially and its proof will be omitted.

\begin{proposition} $$\min(\frak S_n)=\{v_{2i}\mid 1\leq i\leq\lfloor\frac{n}{2}\rfloor\}=V_n\cap \{v_i\mid i\in 2\Bbb N\},$$
$$\max(\frak S_n)=\{v_{2i+1}\mid 0\leq i\leq\lfloor\frac{n-1}{2}\rfloor\}=V_n\cap \{v_i\mid i\in 1+2\Bbb N\}.$$
\end{proposition}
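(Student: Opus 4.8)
The plan is to reduce everything to a single parity observation about the arc set $A_n$. Reading off the two explicit families in the definition of $\frak S_n$, every arc of the first family has the form $(v_{2i+1},v_{2i+2})$ and every arc of the second family has the form $(v_{2i-1},v_{2i-2})$; in both cases the tail carries an odd index and the head carries an even index. Hence \emph{no arc has an even-indexed tail and no arc has an odd-indexed head}. This remark already gives one inclusion in each displayed equality: an even-indexed vertex $v_{2i}$ is never the tail of an arc, so it has no outgoing arc and is a minimum point; dually, an odd-indexed vertex $v_{2i+1}$ is never the head of an arc, so it is a maximum point. Thus $\{v_{2i}\mid 1\le i\le\lfloor n/2\rfloor\}\subseteq\min(\frak S_n)$ and $\{v_{2i+1}\mid 0\le i\le\lfloor\frac{n-1}{2}\rfloor\}\subseteq\max(\frak S_n)$.

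For the reverse inclusions I would show that every odd-indexed vertex of $V_n$ actually has an outgoing arc (so is not a minimum point) and that every even-indexed vertex actually has an incoming arc (so is not a maximum point). For the even vertices this is immediate: given $v_{2j}\in V_n$ with $1\le j\le\lfloor n/2\rfloor$, the first family contains the arc $(v_{2j-1},v_{2j})$, taking $i=j-1$, whose admissibility $j-1\le\lfloor\frac{n-2}{2}\rfloor$ follows from $2j\le n$; hence $v_{2j}$ is a head and not a maximum point. For the odd vertices one splits into interior and terminal: if $v_{2j+1}$ is followed by some $v_{2j+2}\in V_n$, then the first family supplies the outgoing arc $(v_{2j+1},v_{2j+2})$ (again admissible since $2j+2\le n$ forces $j\le\lfloor\frac{n-2}{2}\rfloor$), so $v_{2j+1}$ is a tail and not a minimum point.

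The only delicate point, and the reason the bounds $\lfloor\frac{n-2}{2}\rfloor$ and $\lfloor\frac{n+1}{2}\rfloor$ in the definition are genuinely needed, is the terminal odd vertex. When $n$ is odd the last vertex $v_n$ is odd and lies beyond the range of the forward (first) family, so the interior argument does not apply to it; here one must invoke the backward (second) family, which for $n$ odd reaches the arc $(v_n,v_{n-1})$ by taking $i=\frac{n+1}{2}=\lfloor\frac{n+1}{2}\rfloor$, exhibiting $v_n$ as a tail. Checking that the two families jointly cover exactly the odd tails $v_1,\dots,v_{n-1}$ (for $n$ even) or $v_1,\dots,v_{n-2},v_n$ (for $n$ odd), and the even heads $v_2,\dots,v_n$ or $v_2,\dots,v_{n-1}$ respectively, closes both reverse inclusions. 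I expect this boundary bookkeeping to be the main, and essentially the only, obstacle; the sole genuine exception is the degenerate case $n=1$, where $v_1$ is isolated and is simultaneously a minimum and a maximum point, which is why the statement is stated without fuss for the orders $n\ge 2$ used later.
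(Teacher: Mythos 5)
Your proof is correct; the paper itself offers no argument here (it states that the result ``holds trivially and its proof will be omitted''), and your parity observation --- in both arc families every tail is odd-indexed and every head even-indexed --- together with your boundary checks (admissibility $i=j-1\le\lfloor\frac{n-2}{2}\rfloor$ from $2j\le n$ for the incoming arcs at even vertices, and the second family reaching $(v_n,v_{n-1})$ at $i=\frac{n+1}{2}$ for odd $n\ge 3$) is precisely the routine verification being omitted. Your caveat about $n=1$ is moreover a genuine one that the paper's blanket statement glosses over: there $\min(\frak S_1)=\{v_1\}$, whereas the displayed formula yields $\emptyset$ (the isolated vertex is simultaneously a minimum and a maximum point, but only the maximum identity survives), so the first identity really holds only for $n\ge 2$, which is all the paper ever uses.
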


By convenience we will consider also the case $n=0$. In this case $\frak S_0$ is the void digraph and $\sigma(\frak S_0)=1$, because the only disposition is the void mapping (this may sound strange at first sight, but it will be justified when we study the generating function of this family of digraphs, and it is also a convey for the case $i=\frac{n}{2}$ in the first part of the following theorem and for $i=0$ or $i=\frac{n-1}{2}$ in the second part).

\begin{theorem}\label{recstarcaise} If $n\geq 2$, then $$s_n=\sum_{1\leq i\leq\lfloor\frac{n}{2}\rfloor} {n-1 \choose 2i-1} s_{2i-1}s_{n-2i},$$ and if $n\geq 1$, then
$$s_n=\sum_{0\leq i\leq\lfloor\frac{n-1}{2}\rfloor} {n-1 \choose 2i} s_{2i}s_{n-2i-1}.$$
\end{theorem}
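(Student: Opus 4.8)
The plan is to derive both recurrences from the vertex-deletion recursion of Theorem~\ref{rec}, after establishing the key structural fact that deleting a sink or a source from $\frak S_n$ disconnects it into two smaller staircase digraphs, whose counters can then be combined by Theorem~\ref{contcc}.

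For the first identity I would apply part~1 of Theorem~\ref{rec}, summing over the minimum points (sinks), which by the preceding proposition are exactly the even-indexed vertices $v_{2j}$ with $1\le j\le\lfloor n/2\rfloor$. The central observation is that deleting the sink $v_{2j}$ removes precisely the two arcs $(v_{2j-1},v_{2j})$ and $(v_{2j+1},v_{2j})$, severing the only link between the initial segment $v_1,\dots,v_{2j-1}$ and the terminal segment $v_{2j+1},\dots,v_n$. Reading the induced arc sets directly off the definition of $A_n$, I would check that the subdigraph on $\{v_1,\dots,v_{2j-1}\}$ is $\frak S_{2j-1}$ and that, under the shift $v_{2j+k}\mapsto v_k$, the subdigraph on $\{v_{2j+1},\dots,v_n\}$ is $\frak S_{n-2j}$ (its first vertex $v_{2j+1}$ being again a source). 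As these are the two connected components of $\frak S_n-v_{2j}$, Theorem~\ref{contcc} gives $\sigma(\frak S_n-v_{2j})=\binom{n-1}{2j-1}s_{2j-1}s_{n-2j}$, and summing over $j$ yields the first formula.

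For the second identity I would apply part~2 of Theorem~\ref{rec}, summing over the maximum points (sources), i.e.\ the odd-indexed vertices $v_{2i+1}$ with $0\le i\le\lfloor(n-1)/2\rfloor$. Deleting the source $v_{2i+1}$ removes the arcs $(v_{2i+1},v_{2i})$ and $(v_{2i+1},v_{2i+2})$ and again disconnects the digraph, with initial segment $\{v_1,\dots,v_{2i}\}$ equal to $\frak S_{2i}$. The one genuine subtlety arises here: after the shift, the terminal segment $\{v_{2i+2},\dots,v_n\}$ has its even-indexed vertices as sources, so it is the reverse $\frak S_{n-2i-1}^{\,r}$ rather than a staircase. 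I would therefore invoke Proposition~\ref{countreverse} to replace its counter by $s_{n-2i-1}$; Theorem~\ref{contcc} then gives $\sigma(\frak S_n-v_{2i+1})=\binom{n-1}{2i}s_{2i}s_{n-2i-1}$, and summation yields the second formula.

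The remaining work is routine: the isomorphism verifications are immediate from the explicit arc set $A_n$, and the degenerate boundary cases, where one deleted vertex is $v_n$ so that one of the two detached pieces is the void digraph $\frak S_0$, are precisely what the convention $s_0=1$ was adopted to handle. The only point requiring care is the reversal in the second identity: one must notice that the trailing half of the staircase, once detached, runs ``backwards,'' so Proposition~\ref{countreverse} is needed there although it is not needed in the first identity.
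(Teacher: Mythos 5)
Your proof is correct and takes essentially the same route as the paper's (one-line) proof: sum over sinks, respectively sources, via Theorem~\ref{rec}, identify the two connected components of $\frak S_n$ minus the deleted vertex as smaller staircases, and combine their counters with Theorem~\ref{contcc}. Your explicit observation that in the second identity the trailing component is $\frak S^{r}_{n-2i-1}$ rather than $\frak S_{n-2i-1}$, so that Proposition~\ref{countreverse} is needed, is a detail the paper's terse proof leaves implicit, and you handle it (and the $\frak S_0$ boundary cases) correctly.
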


\begin{proof} Use the previous proposition and Theorems \ref{rec} and \ref{contcc}.
\end{proof}

The first 7 values of $s_n$, for $n$ from $0$ to $6$, are $1,1,1,2,5,16,61$.

For arbitrary $n$, they corresponds to sequence A000111 in \cite{OEIS}, and they are the Euler zigzag numbers. It is known by a result of Andr\'e that the sequence gives half the number of alternating permutations (this follows also immediately from the definition of dispositions, because half of the alternating permutations correspond to dispositions in $\frak S_n$, and the other half to dispositions in $\frak S^r_n$). Euler zigzag numbers have also interesting connections with quantum mechanics (\cite{H},\cite{HS2007},\cite{SH2007})

The fact that the counters of staircase digraphs are the Euler zigzag numbers has the following interpretation in terms of generatrix functions:

\begin{theorem} $$F_{\{\frak S_i\}_{i\in\Bbb Z^+}}(X)=\sec(X)+\tan(X)$$
\end{theorem}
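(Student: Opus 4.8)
The plan is to extract from the two recurrences of Theorem~\ref{recstarcaise} a single first-order differential equation satisfied by the generatrix function $F(X):=F_{\{\frak S_i\}_{i\in\Bbb Z^+}}(X)=\sum_{n\ge 0}\frac{s_n}{n!}X^n$, and then to identify its solution with $\sec(X)+\tan(X)$.

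First I would merge the two recurrences. In the first one, valid for $n\ge 2$, I set $j=2i-1$, so that $j$ runs over the odd integers in $[1,n-1]$ and $s_{n-2i}=s_{n-1-j}$; in the second one, valid for $n\ge 1$, I set $j=2i$, so that $j$ runs over the even integers in $[0,n-1]$ and $s_{n-2i-1}=s_{n-1-j}$. A short check of the floor bounds shows that for $n\ge 2$ these two index sets partition $\{0,1,\dots,n-1\}$, while the binomial coefficient is $\binom{n-1}{j}$ in both cases. Adding the two identities therefore yields the symmetric convolution
\[
2s_n=\sum_{j=0}^{n-1}\binom{n-1}{j}s_j\,s_{n-1-j}\qquad(n\ge 2).
\]

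Next I would read this as a statement about $F$. Since $[X^{m}]F(X)^2=\frac{1}{m!}\sum_{j=0}^{m}\binom{m}{j}s_js_{m-j}$ and $[X^{m}]F'(X)=\frac{s_{m+1}}{m!}$, the displayed recurrence with $m=n-1$ says precisely that $2F'$ and $F^2$ have equal coefficients of $X^{m}$ for every $m\ge 1$. Comparing the constant terms gives $[X^0]\,2F'=2s_1=2$ whereas $[X^0]F^2=s_0^2=1$, so the two series differ only in their constant term and
\[
2F'(X)=1+F(X)^2,\qquad F(0)=s_0=1.
\]
Finally, writing $G(X)=\sec(X)+\tan(X)$ one computes $G'=\sec X\,(\tan X+\sec X)=\sec X\cdot G$ and, using $\tan^2X=\sec^2X-1$, $1+G^2=2\sec X\,(\sec X+\tan X)=2\sec X\cdot G=2G'$, while $G(0)=1$. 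Thus $G$ solves the same initial value problem as $F$; since that Riccati equation has a unique analytic solution near the origin (equivalently, since the recurrence above together with $s_0=s_1=1$ determines every $s_n$, and the Taylor coefficients of $G$ satisfy it), I conclude $F=G=\sec(X)+\tan(X)$.

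The step I expect to be the most delicate is the first one: verifying that the odd-indexed terms coming from the first recurrence and the even-indexed terms coming from the second exactly tile $\{0,\dots,n-1\}$ with no overlap or omission, across both parities of $n$ and with the floor bounds and boundary conventions (the value $s_0=1$ and the base case $n=2$) handled correctly. Once the clean convolution $2s_n=\sum_{j}\binom{n-1}{j}s_j\,s_{n-1-j}$ is in hand, the passage to the differential equation and the verification for $\sec+\tan$ are routine.
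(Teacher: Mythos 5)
Your proof is correct, but it takes a genuinely different route from the paper's. The paper gives no analytic argument at all for this theorem: it identifies the counters $s_n$ with the Euler zigzag numbers (sequence A000111), observing that dispositions of $\frak S_n$ and of $\frak S^r_n$ together account for all alternating permutations, and then the stated generatrix function is exactly Andr\'e's classical theorem for that sequence. The merged convolution $2s_{n+1}=\sum_{k=0}^n\binom{n}{k}s_k s_{n-k}$ that you carefully establish does appear in the paper, but only \emph{after} the theorem, as a remark obtained by summing the two recurrences of Theorem~\ref{recstarcaise}; it plays no role in the paper's justification. You instead turn that convolution into the Riccati initial value problem $2F'=1+F^2$, $F(0)=1$, verify that $G=\sec(X)+\tan(X)$ satisfies $2G'=1+G^2$ with $G(0)=1$, and conclude by uniqueness (equivalently, because the recurrence with $s_0=s_1=1$ determines every coefficient) --- which is essentially the standard proof of Andr\'e's theorem, so your argument makes the paper's appeal to the literature self-contained. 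Your index bookkeeping is sound: the substitutions $j=2i-1$ and $j=2i$ do tile $\{0,\dots,n-1\}$ by parity for both even and odd $n$, and the constant-term comparison ($2s_1=2$ versus $s_0^2=1$, producing the $+1$ in the ODE) is exactly right. What each approach buys: the paper's reduction is one line and carries a transparent bijective meaning (half of the alternating permutations land in $\frak S_n$, half in $\frak S^r_n$); yours is longer but elementary and self-contained, needing nothing beyond Theorem~\ref{recstarcaise} and basic facts about analytic solutions of first-order ODEs.
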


By summing the two expressions for $s_n$ obtained in Theorem \ref{recstarcaise} we get the following formula mentioned in \cite{OEIS}:

If $n\geq 1$, then $$2s_{n+1}=\sum_{k=0}^n {n \choose k} s_k s_{n-k}$$

We introduced in \cite{VM2017} dispositional digraphs in the following way:

\begin{definition} Let $m$ be a natural number and consider $n$ non-negative integers $a_1,\dots,a_n$
such that $a_1+\cdots+a_n=m$ and $n-1$ integer numbers $b_2,...,b_n$. We define the dispositional directed graph \begin{equation}\mathcal G([a_1,0],[a_2,b_2],\dots,[a_n,b_n])\end{equation} to be the digraph with $m$ vertices arranged vertically in $n$ rows of lengths $a_1,...,a_n$, respectively, where for each $i>1$ the $i$-th row is shifted $b_i$ places to the right with respect to the previous one if $b_i>0$, shifted $-b_i$ places to the left if $b_i<0$, and no shifted if $b_i=0$. Moreover, the arcs go from each vertex to its neighbour on the right in the same row, if there is any, and to its neighbour below in the next row, if there is any.
\end{definition}

Dispositional digraphs, and their associated counters, include a large number of tableaux, such as, for example, Young tableaux of skew shape (\cite{S1999}), which are the classical Young tableaux, and also Young tableaux of shifted strips with constant width (\cite{ch2023},\cite{S2017},\cite{VM2017}).

Staircase digraphs are dispositional digraphs. In fact, if $n=2k$, then $\frak S_n$ is $$\mathcal G([2,0],[2,-1],\dots,[2,-1]),$$ where $[2,-1]$ is repeated $k-1$ times, and if $n=2k+1$, then $\frak S_n$ is $$\mathcal G([1,0],[2,-1],\dots,[2,-1]),$$ where $[2,-1]$ is repeated $k$ times. Moreover, all of them correspond to Young tableaux of skew shape.

Next, we will prove that the connected dispositional digraphs with maximum counter are precisely the staircase digraphs. This states and solves the extremal directed graph theory problem of characterizing the structure of connected induced subdigraphs of a given order $m$ of the cartesian product of two directed paths with the maximum number of acyclic orderings (we would like to settle also as an open problem the question of determining the connected induced subdigraphs of a given order $m$ of the cartesian product of $k$ directed paths with the maximum number of acyclic orderings for a given $k\geq 3$ and determine the corresponding number of acyclic orderings):

\begin{theorem} If \begin{equation}\frak G=\mathcal G([a_1,0],[a_2,b_2],\dots,[a_n,b_n])\end{equation} is a dispositional digraph of order $m=a_1+\dots+a_n$ such that its underlying undirected graph is connected, then $\sigma(\frak G)\leq s_m$. Besides, the equality holds if and only if $\frak G$ is isomorphic to $\frak S_m$ or to $\frak S^r_m$.
\end{theorem}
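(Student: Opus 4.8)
The plan is to argue by strong induction on the order $m$, combining the source‑removal recursion of Theorem~\ref{rec} with the multiplicativity of the counter over connected components (Theorem~\ref{contcc}). The base cases $m\le 3$ are checked by hand: the only connected dispositional digraphs of order $\le 3$ are the directed paths (counter $1$) and the digraphs $\frak S_3,\frak S_3^r$ (counter $2=s_3$), so $\sigma(\frak G)\le s_m$ holds with equality exactly for a staircase or its reverse.

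For the inductive step the first ingredient is a structural lemma: \emph{if $v$ is a maximum point of a connected dispositional digraph $\frak G$, then every connected component of $\frak G-v$ is again a connected dispositional digraph.} This is because a maximum point has no incoming arc, hence no left neighbour in its row and no neighbour above it; it is therefore the leftmost vertex of its row, so deleting it leaves every row a contiguous interval and $\frak G-v$ is again a disjoint union of dispositional digraphs of strictly smaller order. Applying Theorem~\ref{rec}(2), writing $c_1^{v},\dots,c_{p_v}^{v}$ for the orders of the components of $\frak G-v$, and using Theorem~\ref{contcc} together with the inductive hypothesis on each component gives
\[
\sigma(\frak G)=\sum_{v\in\max(\frak G)}\sigma(\frak G-v)\le\sum_{v\in\max(\frak G)}\binom{m-1}{c_1^{v},\dots,c_{p_v}^{v}}\,s_{c_1^{v}}\cdots s_{c_{p_v}^{v}} .
\]
Thus the theorem reduces to showing that the right‑hand side is at most $s_m$, where $s_m$ is expanded via Theorem~\ref{recstarcaise}, whose summands are precisely the values produced from $\frak S_m$ when a source $v_{2i+1}$ is removed (it splits $\frak S_m$ into $\frak S_{2i}$ and $\frak S_{m-2i-1}$).

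This final inequality is the heart of the matter and the step I expect to be the main obstacle. The key leverage is that the grid (two‑path) structure severely restricts both the number of sources of $\frak G$ and the way $\frak G-v$ can fall apart: every source is a ``top–left corner'', connectivity forces consecutive rows to overlap in at least one column, and deleting a source yields at most two large components. I would control the sum by comparing it term‑by‑term with the staircase expansion, invoking a convexity/log‑concavity property of the zigzag sequence $\{s_k\}$ (equivalently of $\{s_k/k!\}$, whose generatrix function is $\sec+\tan$). Concretely, the technical core is an inequality of the form $\binom{a+b}{a}s_as_b\ge\binom{a'+b'}{a'}s_{a'}s_{b'}\cdots$ whenever the multiset $\{a',b',\dots\}$ is a refinement or a more unbalanced split than $\{a,b\}$, expressing that the balanced two‑part splits produced by the staircase dominate any distribution of components arising from a thicker digraph.

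Finally, for the equality case I would trace the equalities backwards through the induction. Equality in $\sigma(\frak G)=s_m$ forces, for every source $v$, that each component of $\frak G-v$ is already extremal, hence by induction a staircase or its reverse, and that the inequality above is tight, which pins the source set and the component sizes to those of $\frak S_m$. Translating these constraints back to the row/shift data of $\frak G$ shows that every row has length at most two, consecutive rows overlap in exactly one column, and the overlaps alternate sides (all shifts equal to $-1$, or all equal to $+1$): a longer row, a wider overlap, or a non‑alternating turn creates an extra arc or a branch vertex which, through Proposition~\ref{countersubdigraph} and the strict inductive gap, lowers the counter below $s_m$. The only remaining possibilities are $\frak S_m$ and $\frak S_m^r$, which gives the characterization.
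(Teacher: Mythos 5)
Your reduction via Theorem~\ref{rec}(2), Theorem~\ref{contcc} and induction is sound, and your structural observations (a source is a top--left corner of its row, has out-degree at most $2$, and its deletion leaves dispositional components) are correct. But the step you yourself flag as the heart of the matter --- bounding $\sum_{v\in\max(\frak G)}\binom{m-1}{c_1^v,\dots,c_{p_v}^v}s_{c_1^v}\cdots s_{c_{p_v}^v}$ by $s_m$ through a term-by-term comparison with the source-removal expansion of $s_m$ in Theorem~\ref{recstarcaise} --- is a genuine gap, and the route you sketch cannot be repaired. First, your proposed ``refinement'' inequality points the wrong way: Lemma~\ref{sjsisjmi} states $s_j\le\binom{j}{i}s_is_{j-i}$, so refining a split makes terms \emph{larger}, not smaller. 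Second, and decisively, injective term-wise domination against the max-expansion of $s_m$ is false. Take the digraph $\mathcal G([2,0],[2,-1],[1,0])$ of order $5$: it is the zigzag path, isomorphic to $\frak S_5$ up to reversal, hence a legitimate equality case. It has two sources, and deleting either one splits it into components of sizes $1$ and $3$, so both summands in your bound equal $\binom{4}{1}s_1s_3=8$; but the expansion $s_5=\binom{4}{0}s_0s_4+\binom{4}{2}s_2s_2+\binom{4}{4}s_4s_0=5+6+5$ contains no term $\ge 8$, so no injection of your summands into these terms can dominate --- even though the totals agree, $8+8=16=s_5$. The obstruction is parity: in a general dispositional presentation a source can split the digraph into odd-sized parts, which never occur in the staircase's own source-removal expansion, so the one-sided accounting cannot close.

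The paper's proof avoids exactly this by working two-sidedly: it adds both parts of Theorem~\ref{rec} to get $2\sigma(\frak G)=\sum_{u\in\min(\frak G)}\sigma(\frak G-u)+\sum_{v\in\max(\frak G)}\sigma(\frak G-v)$ and compares against the full identity $2s_m=\sum_{k=0}^{m-1}\binom{m-1}{k}s_ks_{m-1-k}$, in which every split size $k$, odd and even, appears exactly once. The two ingredients your proposal is missing are then: (i) an injectivity statement --- a minimum or maximum point that is a cut vertex of the underlying graph separates $\frak G$ into exactly two pieces, and ordering these cut vertices lexicographically makes the size $c(v)$ of the piece containing the top-left vertex strictly increasing, so distinct cut vertices consume distinct terms of the full expansion; and (ii) Lemma~\ref{sjsisjmi} used to absorb each non-disconnecting minimum/maximum point, which contributes at most $s_{m-1}$, into a distinct leftover term. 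In the five-vertex example above, the two sources and three sinks consume precisely the five terms $8,8,5,6,5$ of $2s_5=32$. Your equality analysis, which traces tightness back through the flawed inequality, inherits the same gap; in the paper it instead falls out of the accounting, since equality forces $\min(\frak G)\cup\max(\frak G)$ to exhaust all $m$ vertices, whence minimum and maximum points alternate and $\frak G\cong\frak S_m$ or $\frak S_m^r$.
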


We will use the following lemma:

\begin{lemma}\label{sjsisjmi} If $j\in\Bbb Z^+$, then $$s_j\leq {j \choose i} s_is_{j-i}\ \forall i\in\{0,\dots,j\}.$$
\end{lemma}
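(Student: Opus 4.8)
The plan is to interpret both sides combinatorially and then derive the inequality from the monotonicity of the counter under arc deletion. First I would observe that, by Theorem~\ref{contcc}, the right-hand side $\binom{j}{i}s_is_{j-i}$ is precisely the counter of the disjoint union of a staircase digraph of order $i$ and one of order $j-i$: such a union has two connected components, of orders $i$ and $j-i$, so its counter equals $\binom{j}{i,j-i}s_is_{j-i}=\binom{j}{i}s_is_{j-i}$. It therefore suffices to exhibit a subdigraph of $\frak S_j$, on the very same vertex set $\{v_1,\dots,v_j\}$, whose counter equals this product, and then invoke Proposition~\ref{countersubdigraph}.

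The key observation is structural: the underlying undirected graph of $\frak S_j$ is the path $v_1-v_2-\cdots-v_j$, with the arcs alternating in orientation (a zigzag). Hence, deleting the single arc joining $v_i$ and $v_{i+1}$ (whichever direction it carries) produces a digraph $\frak G'$ with the same $j$ vertices but with exactly two connected components: the induced subdigraph on $\{v_1,\dots,v_i\}$, which by the defining arc pattern is precisely $\frak S_i$, and the induced subdigraph on $\{v_{i+1},\dots,v_j\}$, which is a zigzag on $j-i$ vertices. Depending on the parity of $i$, this second piece is isomorphic either to $\frak S_{j-i}$ or to its reverse $\frak S_{j-i}^r$; in either case Proposition~\ref{countreverse} guarantees that its counter is $s_{j-i}$. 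Applying Theorem~\ref{contcc} to $\frak G'$ then yields $\sigma(\frak G')=\binom{j}{i}s_is_{j-i}$.

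Since $\frak G'$ arises from $\frak S_j$ by removing one arc, it is a subdigraph of $\frak S_j$ on the same vertex set, so Proposition~\ref{countersubdigraph} gives $s_j=\sigma(\frak S_j)\leq\sigma(\frak G')=\binom{j}{i}s_is_{j-i}$, as desired. The boundary cases $i=0$ and $i=j$ must be handled separately, since then there is no arc to delete; here $\binom{j}{i}=1$ and $s_0=1$, so the assertion collapses to the trivial $s_j\le s_j$ under the convention $\sigma(\frak S_0)=1$. I expect the only point requiring genuine care to be the verification that the two induced pieces are again staircase digraphs up to reversal, so that their counters are exactly $s_i$ and $s_{j-i}$; this is where the precise alternating arc pattern of $\frak S_j$ and the reversal-invariance of the counter must be used in tandem, while the inequality itself then follows immediately from arc deletion.
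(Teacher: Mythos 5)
Your proposal is correct and takes essentially the same approach as the paper, whose entire proof reads ``Use Theorem~\ref{contcc} and Proposition~\ref{countersubdigraph} with $\frak S_j$ and the subdigraph obtained removing the $i$-th arc.'' The extra details you supply --- identifying the two components as $\frak S_i$ and (up to parity) $\frak S_{j-i}$ or $\frak S^r_{j-i}$ via Proposition~\ref{countreverse}, and noting the trivial boundary cases $i=0$ and $i=j$ --- merely make explicit what the paper leaves implicit.
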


\begin{proof} Use Theorem \ref{contcc} and Proposition \ref{countersubdigraph} with $\frak S_j$ and the subdigraph obtained removing the $i$-th arc.
\end{proof}

Now we will prove the theorem.

\begin{proof} Let $\frak U(\frak G)$ be the underlying undirected graph associated to $\frak G$. If $v$ is a cut-vertex of $\frak U(\frak G)$ that is also a minimum point of $\frak G$, then its out-degree is $0$, by definition of minimum point, and its in-degree could, in principle, be $1$ or $2$, because $\frak U(\frak G)$ is connected and hence the total degree of $v$ is positive. Let us suppose, for the sake of contradiction, that it is $1$.

If there is an arc $(w,v)$, where $w$ is the preceeding vertex of $v$ in its row, then we have the situation depicted in Figure \ref{indegree1} (a), and we deduce that $\frak U(\frak G)-v$ is connected, what contradicts that $v$ is a cut-vertex.

If there is an arc $(w,v)$, where $w$ is the preceeding vertex of $v$ in its column, then we have the situation shown in Figure \ref{indegree1} (b), and thus $\frak U(\frak G)-v$ is connected and we get again a contradiction.

\begin{figure}[H]
    \centering
    \begin{subfigure}[t]{0.45\textwidth}
        \centering
				\caption{}
        \includegraphics[width=5 in]{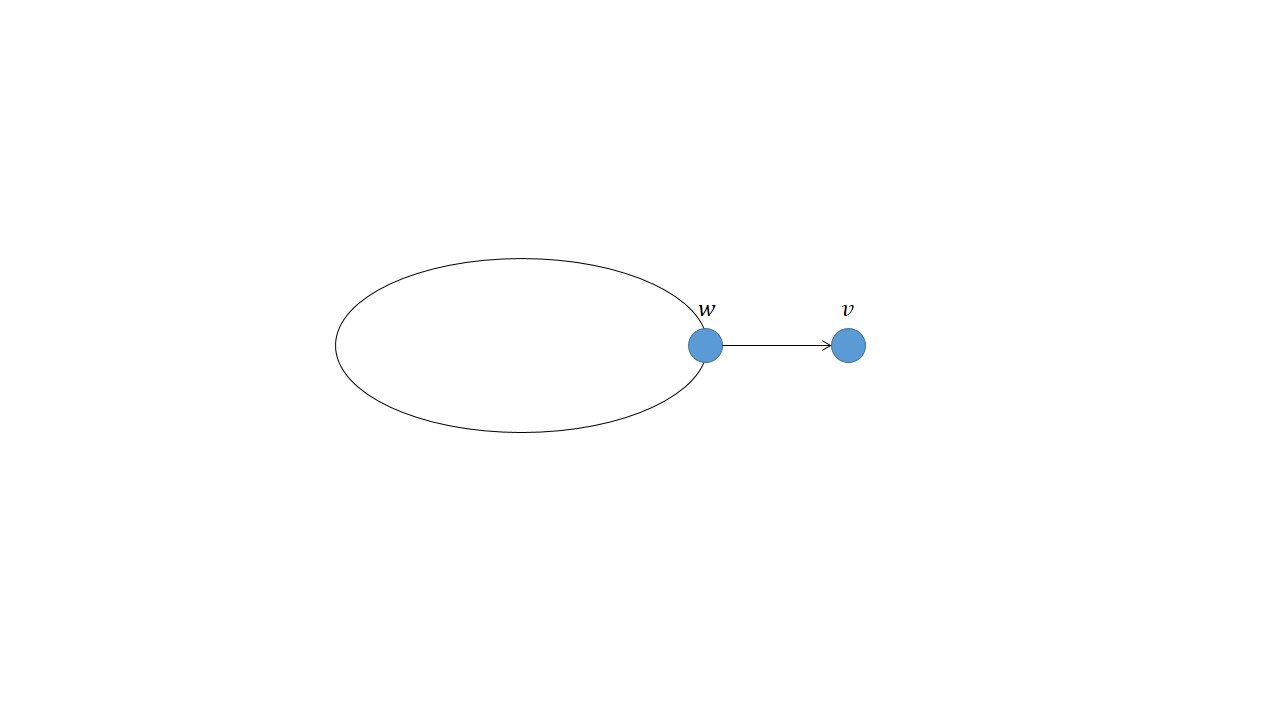} 
    \end{subfigure}
    \hfill
    \begin{subfigure}[t]{0.45\textwidth}
        \centering
				\caption{}
        \includegraphics[width=5 in]{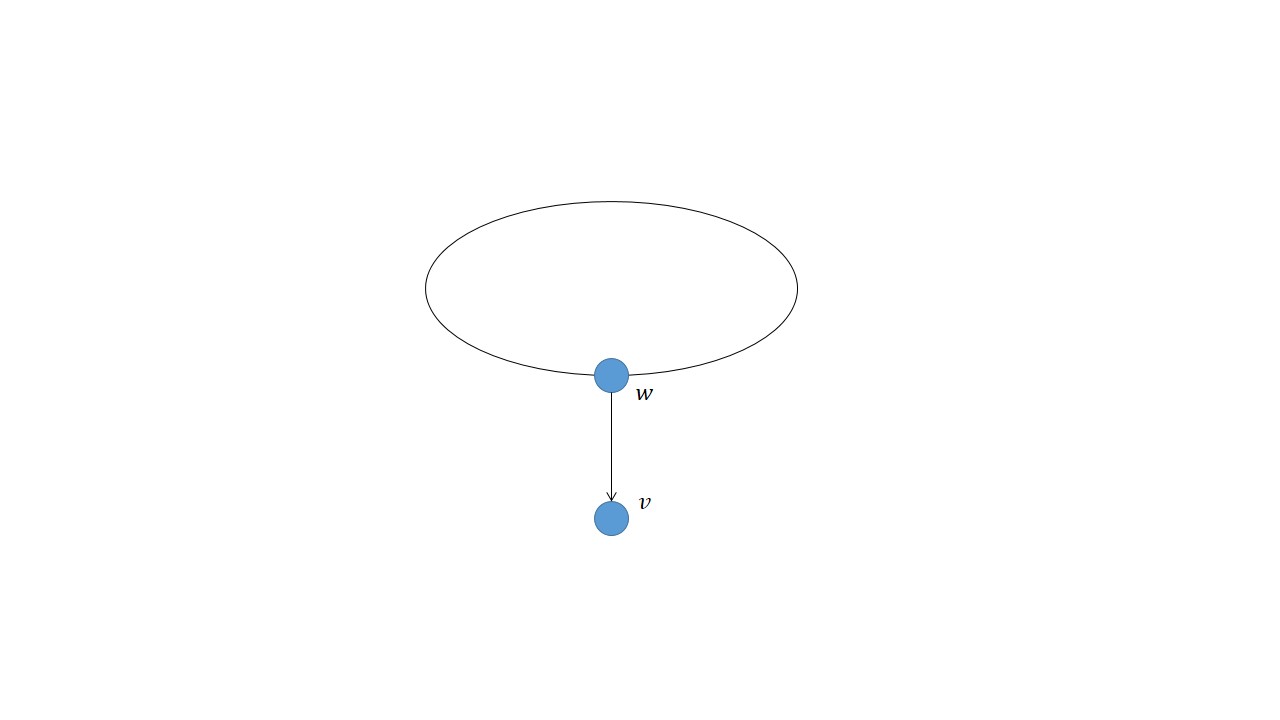} 
    \end{subfigure}
    \caption{In-degree 1. the oval in each part represents the dispositional digraph induced by $V(\frak G)-\{v\}$}\label{indegree1}
\end{figure}

Therefore, the in-degree of $v$ is $2$ and the out-degree of $v$ is $0$, and we are in the situation displayed in Figure \ref{indegree2} (a), so that $\frak U(\frak G)-v$ has two connected components, one of them formed by the vertices in the rows above $v$, and the other one formed by the columns to the left of $v$.

In a similar way, if $v$ is a cut-vertex of $\frak U(\frak G)$ that is also a maximum point of $\frak G$, then we are in the situation depicted in Figure \ref{indegree2} (b), and $\frak U(\frak G)-v$ has two connected components, one of them formed by the vertices in the columns to the right of $v$, and the other one formed by the rows below $v$.

\begin{figure}[H]
    \centering
    \begin{subfigure}[t]{0.45\textwidth}
        \centering
				\caption{}
        \includegraphics[width=4 in]{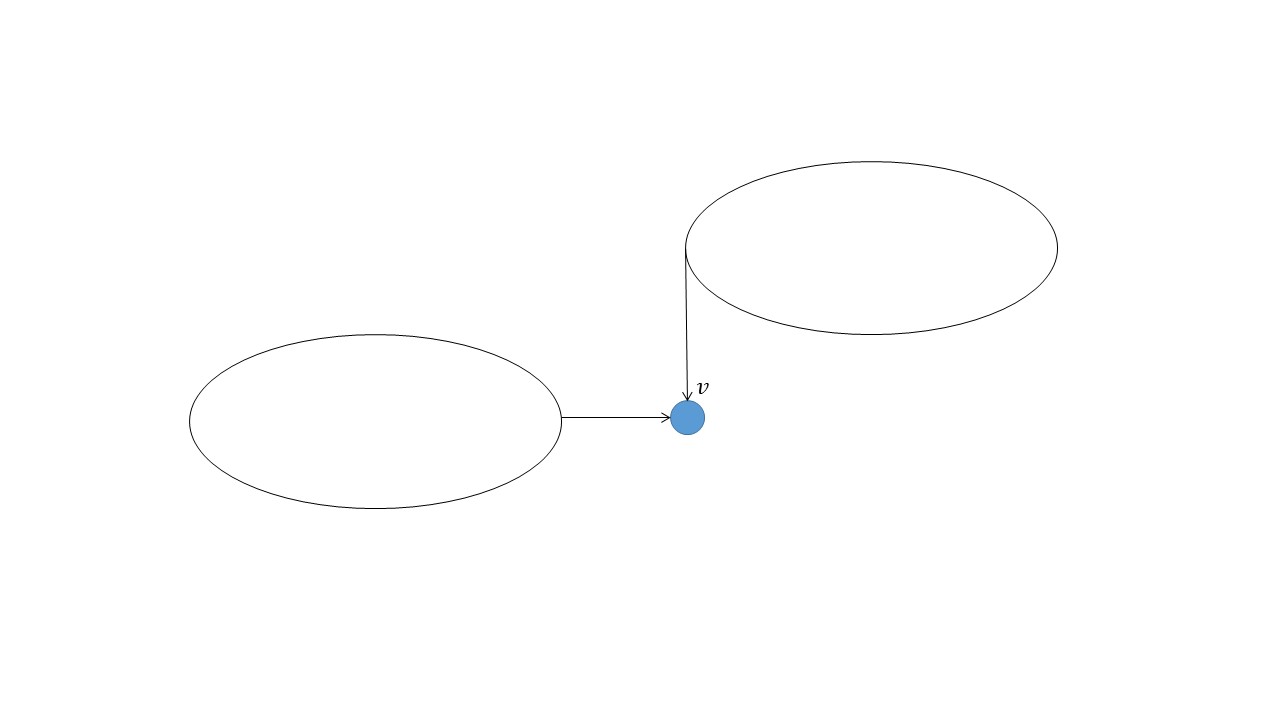} 
    \end{subfigure}
    \hfill
    \begin{subfigure}[t]{0.45\textwidth}
        \centering
				\caption{}
        \includegraphics[width=4 in]{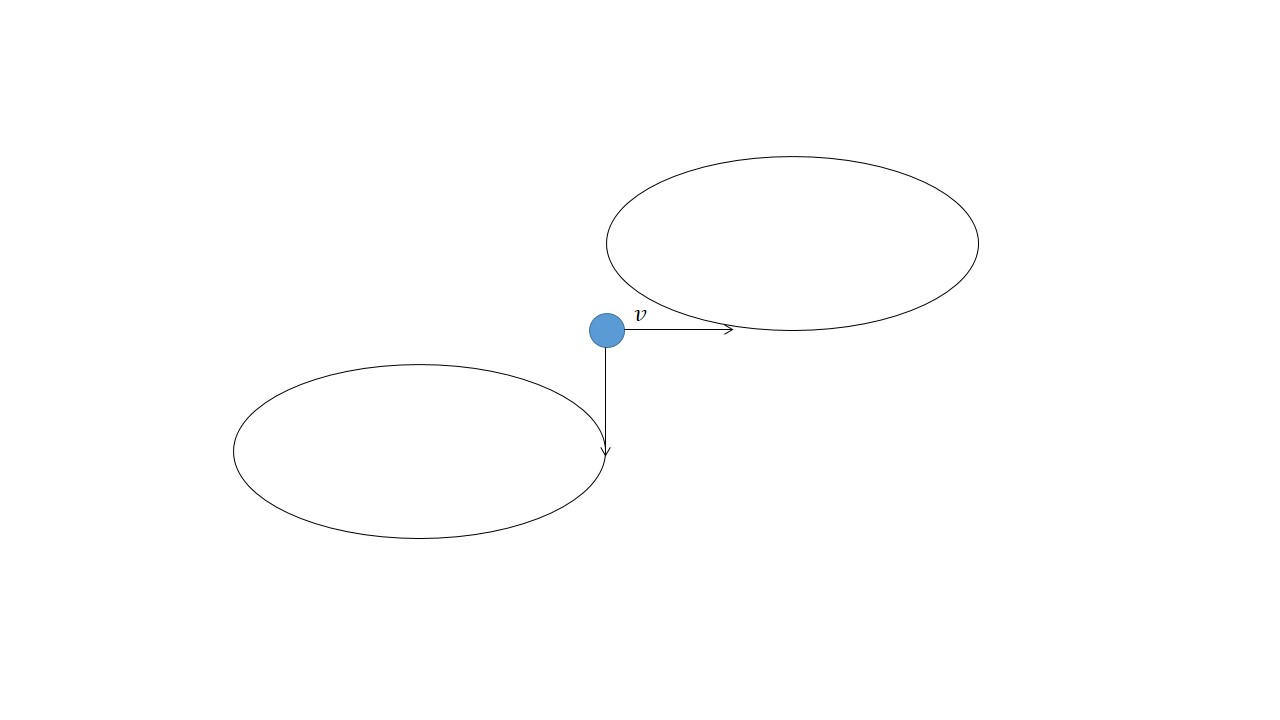} 
    \end{subfigure}
    \caption{In-degree 2. the ovals in each part represent the connected components of $V(\frak G)-\{v\}$}\label{indegree2}
\end{figure}

Let $CV$ be the set of cut-vertices of $\frak U(\frak G)$ and $$S=(\min(\frak G)\cup\max(\frak G))\cap CV.$$ For each one of these vertices exactly one of the two configurations described in Figure \ref{indegree2} holds.

If $v\in S$, we will call $\frak C_1(v)$ to the connected component containing the leftmost vertex in the first row, $\frak C_2(v)$ to the other connected component, and $c(v)$ to $\vert \frak C_1(v)\vert$.

We can order lexicographically the vertices of $S$ so that if $v_1$ and $v_2$ are distinct vertices in $S$, then $v_1\prec v_2$ if $v_1$ is in a row upper than $v_2$ or if $v_1$ and $v_2$ are in the same row but $v_1$ is in a column to the left of $v_2$ (of course, there can be at most two vertices of $S$ in the same row). Obviously, if $v_1\prec v_2$, then $c(v_1)<c(v_2)$.

Let $N=\{c(v)\mid v\in S\}$.

By Theorem \ref{rec}, $$2\sigma(\frak G)=\sum_{u\in\min(\frak G)}\sigma(\frak G-u)+\sum_{v\in\max(\frak G)}\sigma(\frak G-v).$$ This equals $$\sum_{u\in S}\sigma(\frak G-u)+\sum_{v\in(\min(\frak G)\cup\max(\frak G))-CV}\sigma(\frak G-v).$$ By Theorem \ref{contcc}, the first summand equals $$\sum_{u\in S} {m-1 \choose c(u)}\sigma(\frak C_1(u))\sigma(\frak C_2(u)),$$ and this is, by the inductive hypothesis, at most $$\sum_{n\in N} {m-1\choose n} s_n s_{m-1-n}.$$ Now, by the inductive hypothesis, $$2\sigma(\frak G)\leq\sum_{n\in N} {m-1\choose n} s_n s_{m-1-n}+\vert (\min(\frak G)\cup\max(\frak G))-CV\vert s_{m-1}$$ and, by Lemma \ref{sjsisjmi}, if $\{1,\dots,m\}-N=\{n_1,\dots,n_k\}$, then $$2\sigma(\frak G)\leq\sum_{n\in N} {m-1\choose n} s_n s_{m-1-n}+\sum_{i=1}^{\vert (\min(\frak G)\cup\max(\frak G))-S\vert} {m-1 \choose n_i} s_{n_i}s_{m-1-n_i},$$ and this is at most $$\sum_{i=0}^{m-1} {n \choose i} s_is_{m-1-i},$$ and the equality holds if and only if $\min(\frak G)\cup\max(\frak G)=\{1,\dots,m\}$. In this case maximum and minimum points alternate and go trough all the vertices of $\frak G$, and $\frak G$ is isomorphic to $\frak S_m$ or to $\frak S^r_m$.
\end{proof}

The condition that the underlying undirected graph is connected is essential and cannot be avoided. For instance, if we take the empty digraph $E_5$, which is dispositional and not connected then $\sigma(E_5)=120$, but $s_5=16$. In fact, it is obvious, by Proposition \ref{countersubdigraph}, that the maximum value that the counter of a digraph of order $n$ can have is $n!$, and it is clear that the equality holds if and only if the digraph is isomorphic to $E_n$.

The condition that the digraph is dispositional is also esential. If we take the star digraph shown in the following figure,

\begin{figure}[H]
\caption{Star digraph of order $5$}
\includegraphics[width=5 in]{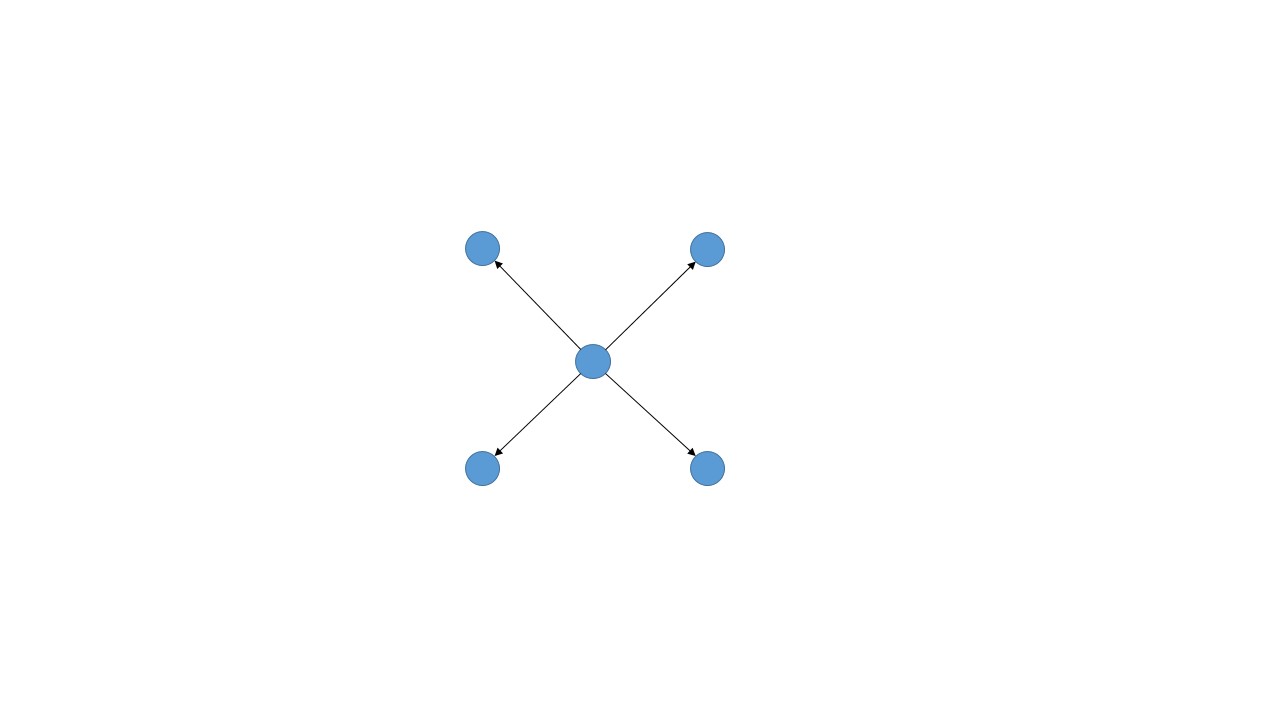}
\end{figure}

which is connected but is not dispositional, then by part 2 of Theorem \ref{rec} we obtain that its counter is $4!=24$, but $s_5=16$.

\section{Companion polynomial of a digraph}\label{cpdigraph}
Before stating the main theorem of this section we will establish some notation. Given a digraph $\frak G=(V,A)$ of order $n$, a vertex $v$ of $V$ and a nonnegative integer $i$, we take $i+1$ vertices $z_0,z_1,\dots,z_i$ with $z_0=v$ and $V\cap\{z_1,\dots,z_i\}=\emptyset$, and we define $P_{i+1}$ to be the directed path $\{z_0,z_1,\dots,z_i\}$ with $i+1$ vertices and arcs $(z_0,z_1),\dots,(z_{i-1},z_i)$, and we also define $\frak G_i=\frak G+P_{i+1}$. Note that when $i=0$ the digraph $\frak G_0$ is just $\frak G$.

In this way we get a head, which is formed by the digraph $\frak G$, and a tail, which is the infinite path with vertices $z_0,z_1,\dots$ whose induced graph on $\{z_0,\dots,z_i\}$ is $P_{i+1}$, as is shown in the next two figures:

\begin{figure}[H]
\caption{Digraph $\frak G$}
\includegraphics[width=5 in]{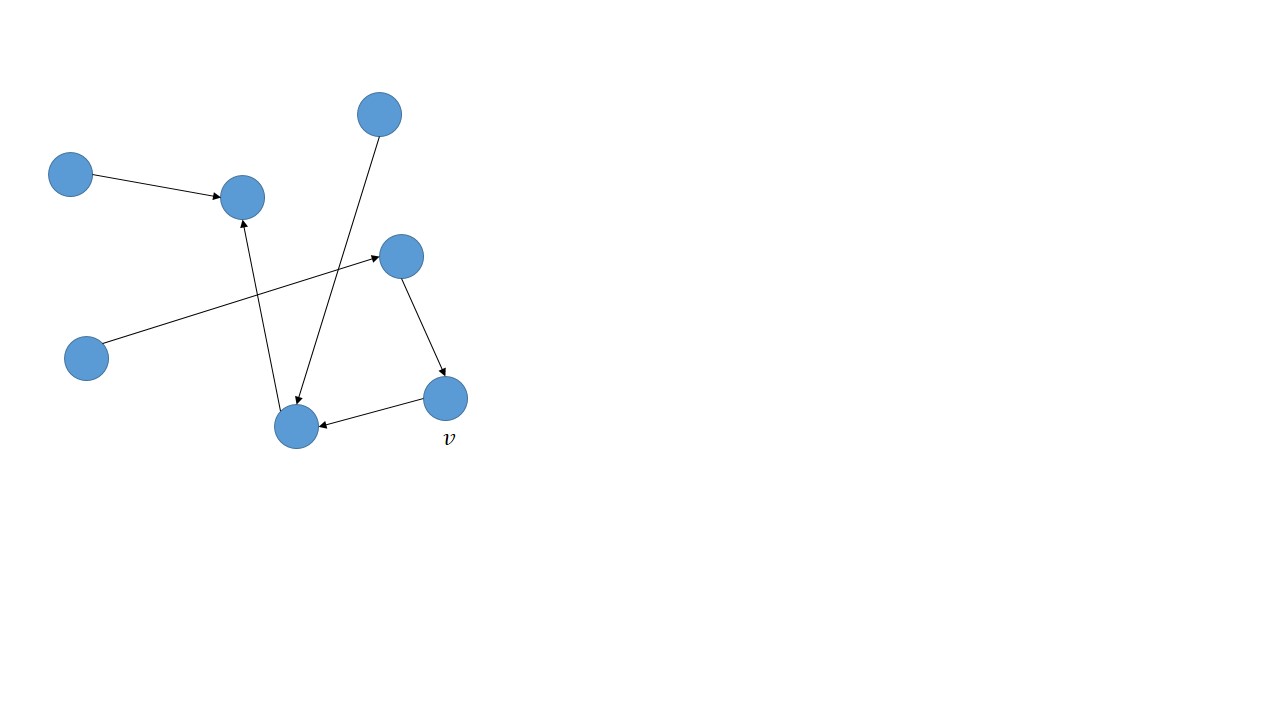}
\end{figure}

\begin{figure}[H]
\caption{Digraph $\frak G$ with infinite path adjoined}
\includegraphics[width=5 in]{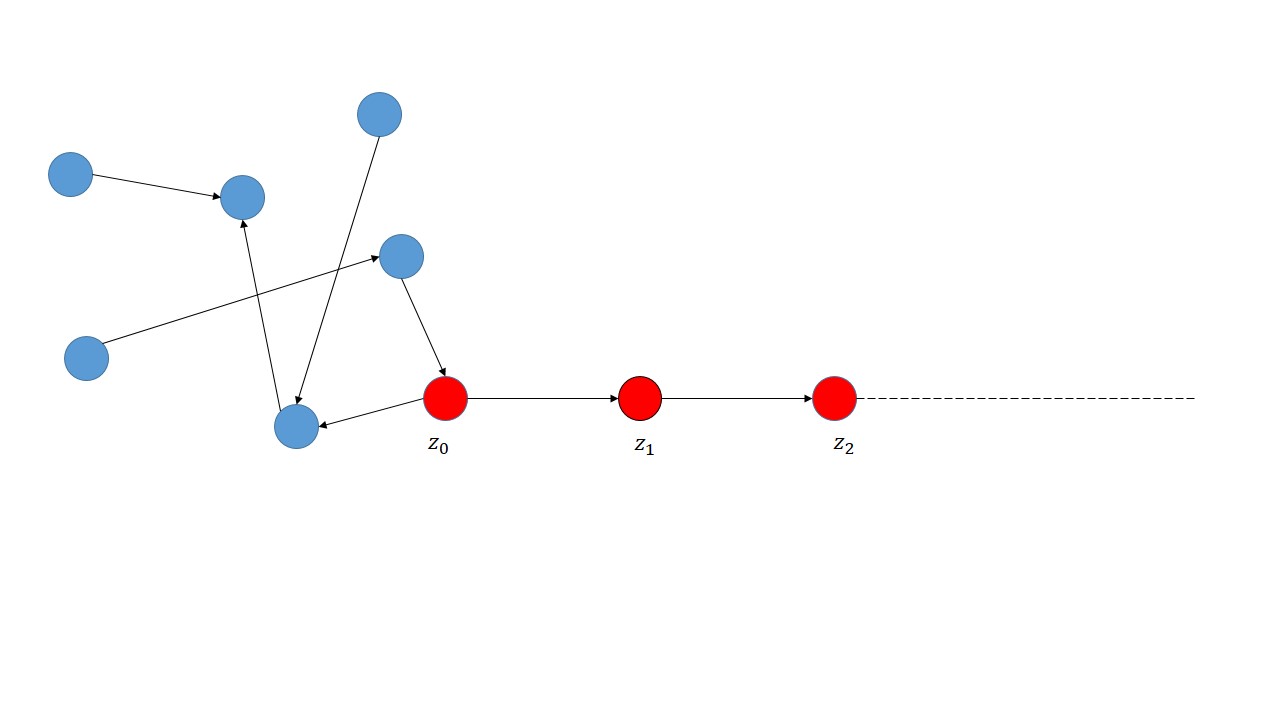}
\end{figure}

\begin{theorem}\label{rcp} Let $\frak G=(V,A)$ be a digraph of order $n$, and let $v\in V$. Then there exists a unique polynomial $T_{\frak G,v}(X)\in\Bbb Q[X]$ with $\deg(T_{\frak G,v}(X))\leq n-1$ such that $$F_{\{\frak G_i\}_{i\in\Bbb Z^+}}(X)=T_{\frak G,v}(X)\exp(X).$$ Besides, the polynomials $T_{\frak G,v}(X)$ satisfy the recurrence relation $$T_{P_1,v}(X)=1$$ and $$\frac{d}{dX}T_{\frak G,v}(X)=\sum_{w\in\min(\frak G)-\{v\}}(T_{\frak G-w,v}(X)+\frac{d}{dX}T_{\frak G-w,v}(X))$$ with $$T_{\frak G,v}(0)=\sigma(\frak G)$$
\end{theorem}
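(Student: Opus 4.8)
The plan is to induct on the order $n$ of $\frak G$, using as the engine a recurrence for the counters $\sigma(\frak G_i)$ which I will then convert into a first-order linear differential equation for the generatrix function. The first and, I expect, most delicate step is to pin down the minimum points of $\frak G_i$ for $i\geq 1$. Attaching the directed path $z_0=v,z_1,\dots,z_i$ adds to $\frak G$ only the arc $(v,z_1)$ and creates one fresh sink $z_i$; consequently $\min(\frak G_i)=(\min(\frak G)-\{v\})\cup\{z_i\}$, and this union is disjoint. Deleting $z_i$ returns $\frak G_{i-1}$, whereas deleting a sink $w\neq v$ of $\frak G$ yields $(\frak G-w)_i$, i.e.\ the very same path attached at $v$ to the smaller head $\frak G-w$. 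Plugging this into part~1 of Theorem~\ref{rec} gives, for $i\geq 1$,
$$\sigma(\frak G_i)=\sigma(\frak G_{i-1})+\sum_{w\in\min(\frak G)-\{v\}}\sigma((\frak G-w)_i).$$
The hard part will be exactly this bookkeeping — correctly reading off the sinks of $\frak G_i$ and recognizing the two kinds of vertex deletions — since once the recurrence is in hand everything downstream is formal.

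Next I would pass to generatrix functions. The cleanest route is to differentiate $F_{\{\frak G_i\}}(X)=\sum_{i\geq 0}\sigma(\frak G_i)X^i/i!$, which merely shifts the index; substituting the recurrence (with $i+1$ in place of $i$, valid for all $i\geq 0$) collapses the two terms into
$$\frac{d}{dX}F_{\{\frak G_i\}}(X)=F_{\{\frak G_i\}}(X)+\sum_{w\in\min(\frak G)-\{v\}}\frac{d}{dX}F_{\{(\frak G-w)_i\}}(X).$$
The base case $n=1$ is $\frak G=P_1$, where $\frak G_i=P_{i+1}$ and Proposition~\ref{generatrixpath} gives $F_{\{\frak G_i\}}(X)=\exp(X)$, so $T_{P_1,v}(X)=1$, a polynomial of degree $0=n-1$.

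For the inductive step, each $\frak G-w$ has order $n-1$ and still contains $v$, so by the induction hypothesis $F_{\{(\frak G-w)_i\}}(X)=T_{\frak G-w,v}(X)\exp(X)$ with $\deg T_{\frak G-w,v}\leq n-2$. Setting $H(X)=\sum_{w}\left(T_{\frak G-w,v}(X)+\frac{d}{dX}T_{\frak G-w,v}(X)\right)$, a polynomial of degree $\leq n-2$, the displayed relation becomes the inhomogeneous linear ODE $F'-F=H(X)\exp(X)$. I would solve it with the ansatz $F=T(X)\exp(X)$, which reduces the equation to $T'(X)=H(X)$; hence $T(X)=T(0)+\int_0^X H(t)\,dt$ is a polynomial, and the single integration is precisely what raises the degree bound from $n-2$ to $n-1$. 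The initial condition $F(0)=\sigma(\frak G_0)=\sigma(\frak G)$ then forces $T_{\frak G,v}(0)=\sigma(\frak G)$ and fixes the constant, while $T'_{\frak G,v}=H$ is exactly the stated recurrence.

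It remains to justify that the genuine generatrix function coincides with this $T_{\frak G,v}(X)\exp(X)$ and that the polynomial is unique. For existence of a unique power-series solution of $F'-F=H(X)\exp(X)$ with prescribed $F(0)$, I would note the coefficientwise recursion $c_{i+1}=(c_i+k_i)/(i+1)$ determines all coefficients from $c_0$; since $F_{\{\frak G_i\}}$ satisfies both the ODE and the initial value, it must equal the solution constructed above. Uniqueness of $T_{\frak G,v}$ is then immediate, because $\exp(X)$ is a unit in $\Bbb Q[[X]]$, so $T_1\exp(X)=T_2\exp(X)$ implies $T_1=T_2$.
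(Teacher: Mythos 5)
Your proposal is correct and follows essentially the same route as the paper: the same identification $\min(\frak G_i)=(\min(\frak G)-\{v\})\cup\{z_i\}$, the same counter recurrence derived from part 1 of Theorem~\ref{rec}, the same passage to the first-order relation $F'-F=H(X)\exp(X)$ with $H$ supplied by the inductive hypothesis, and the same integration step yielding the degree bound. Your only deviations are cosmetic --- you differentiate the generatrix function before substituting the recurrence, which lets you work only with indices $i\geq 1$ and avoid the paper's $\sigma(\frak G_{-1})$ convention for the $j=0$ term, and you verify $F=T_{\frak G,v}(X)\exp(X)$ via coefficientwise uniqueness of the power-series initial value problem rather than by first dividing formal series by $\exp(X)$ as the paper does.
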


\begin{proof} First, observe that \begin{equation}\label{mingsi}\min(\frak G_i)=S\cup(\min(\frak G)-\{v\})\ \forall i\in\Bbb Z^+, \end{equation} where $$S=\begin{cases} \emptyset, & \text{if }i=0\text{ and }v\not\in\min (\frak G),\\ z_i, & \text{in other case}.\\ \end{cases}$$ We will prove the theorem by induction on the order $n$ of $\frak G$. When $n=1,\frak G=P_1$ and $v$ is the only vertex of the digraph, and it follows from Proposition \ref{generatrixpath} that the result holds with $T_{P_1,v}(X)=1$.

Let us suppose that what we want to prove is true for digraphs of order $n-1$, and let us consider a digraph $\frak G$ of order $n\geq 2$. If $\min(\frak G)-\{v\}=\emptyset$, then $$\sigma(\frak G_i)=\sigma(\frak G)\ \forall i\in\Bbb Z^+,$$ and therefore $$F_{\{\frak G_i\}_{i\in\Bbb Z^+}}(X)=\sum_{i=0}^{\infty}\frac{\sigma(\frak G)}{i!}X^i=\sigma(\frak G)\sum_{i=0}^{\infty}\frac{X^i}{i!}=\sigma(G)\exp (X).$$ If $\min(\frak G)-\{v\}\not=\emptyset$ then, since the independent term of the Maclaurin expansion of $\exp(X)$ is non-zero, there exists a unique formal series $$T_{\frak G,v}(X)=\sum_{i\in\Bbb Z^+}t_i X^i$$ in $\Bbb Q[X]$ such that \begin{equation}\label{mclgi}F_{\{\frak G_i\}_{i\in\Bbb Z^+}}(X)=T_{\frak G,v}(X)\exp(X).\end{equation} We will prove that in the end $T_{\frak G,v}(X)$ is a polynomial of degree at most $n-1$.

Using (\ref{mingsi}) and part 1 of Theorem \ref{rec} we obtain that $$\sigma(\frak G_j)=\sigma(\frak G_{j-1})+\sum_{w\in\min(\frak G)-\{v\}} \sigma((\frak G-w)+P_{j+1})\forall j\in\Bbb Z^+,$$ where we convey that $$\sigma(\frak G_{-1})=\begin{cases} \sigma(\frak G-\{v\}), & \text{if }v\in\min(\frak G),\\ 0, & \text{in other case}.\\ \end{cases}$$ If $i$ is a non-negative integer and $0\leq j\leq i$, by multiplying by $\frac{X^j}{j!}$ in this identity and summing over $j$ we get \begin{equation}\label{recvinmin}\sum_{j=0}^i\frac{\sigma(\frak G_j)}{j!}X^j=\sigma(\frak G_{-1})+\sum_{j=1}^i\frac{\sigma(\frak G_{j-1})}{j!}X^j+\sum_{w\in\min(\frak G)-\{v\}}\sum_{j=0}^i\frac{\sigma((\frak G-w)+P_{j+1})}{j!}X^j.\end{equation}

It follows from (\ref{recvinmin}) that \begin{equation}\label{recvinmin2}F_{\{\frak G_i\}_{i\in\Bbb Z^+}}(X)=\sigma(\frak G_{-1})+\sum_{i\in\Bbb Z^+}\frac{\sigma(\frak G_i)}{(i+1)!}X^{i+1}+\sum_{w\in\min(\frak G)-\{v\}}F_{\{(G-w)+P_{i+1}\}_{i\in\Bbb Z^+}}(X).\end{equation}

Obviously, \begin{equation}\label{dsumsgifip1}\frac{d}{dX}\sum_{i\in\Bbb Z^+}\frac{\sigma(\frak G_i)}{(i+1)!}X^{i+1}=F_{\{\frak G_i\}_{i\in\Bbb Z^+}}(X)\end{equation} and, by the inductive hypothesis, for each $w\in\min(\frak G)-\{v\}$, the identity \begin{equation}\label{fgmwppipo}F_{\{(\frak G-w)+P_{i+1}\}_{i\in\Bbb Z^+}}(X)=T_{\frak G-w,v}(X)\exp(X)\end{equation} holds, where $T_{\frak G-w,v}(X)$ is a polynomial of degree at most $n-2$. Deriving in (\ref{recvinmin2}) and using (\ref{dsumsgifip1}) and (\ref{fgmwppipo}), we get $$\frac{d}{dX} F_{\{\frak G_i\}_{i\in\Bbb Z^+}}(X)=F_{\{\frak G_i\}_{i\in\Bbb Z^+}}(X)+\sum_{w\in\min(\frak G)-\{v\}}(T_{\frak G-w,v}(X)+\frac{d}{dX} T_{\frak G-w,v}(X))\exp(X)$$ and, by (\ref{mclgi}), we obtain \begin{equation}\label{recvinmin3} \frac{d}{dX}F_{\{\frak G_i\}_{i\in\Bbb Z^+}}(X)=(T_{\frak G,v}(X)+\sum_{w\in\min(\frak G)-\{v\}}(T_{\frak G-w,v}(X)+\frac{d}{dX}T_{\frak G-w,v}(X)))\exp(X).\end{equation}

On the other hand, deriving in (\ref{mclgi}) we have \begin{equation}\label{derivegeneratrix}\frac{d}{dX} F_{\{\frak G_i\}_{i\in\Bbb Z^+}}(X)=(T_{\frak G,v}(X)+\frac{d}{dX} T_{\frak G,v}(X))\exp(X),\end{equation} and we conclude from (\ref{recvinmin3}) and (\ref{derivegeneratrix}), by simplifying the summand $T_{\frak G,v}(X)\exp(X)$ and then simplifying the factor $\exp(X)$  that $$\frac{d}{dX} T_{\frak G,v}(X)=\sum_{w\in\min(\frak G)-\{v\}}(T_{\frak G-w,v}(X)+\frac{d}{dX} T_{\frak G-w,v}(X)).$$

It holds that $$T_{\frak G,v}(0)=\sigma(\frak G_0)=\sigma(\frak G).$$

Finally, observe that $\deg(T_{\frak G,v}(X))\leq n-1$, because for each $w\in\min(\frak G)-\{v\}$ we have that $\deg(T_{\frak G-w,v}(X)+\frac{d}{dX} T_{\frak G-w,v}(X))\leq n-2$, and therefore the degree of its primitives is at most $n-1$.
\end{proof}

We will call to $T_{\frak G,v}(X)$ the companion polynomial of $\frak G$ with respect to the vertex $v$.

We can observe that whenever $v\in\min(\frak G)$ the sum in the recurrence equation goes over $\min(\frak G)-\{v\}$, while when $v\not\in\min(\frak G)$ it goes over $\min(\frak G)$.

It is important to stress that the companion polynomial not only depends on the digraph, but also depends on the vertex of the digraph. For instance, if we take the directed path $P_2$ with vertex set $\{v_1,v_2\}$ and arc set $\{(v_1,v_2)\}$, then $T_{P_2,v1}=X+1$, but $T_{P_2,v2}=1$. This same example also shows that if $\frak G$ is a digraph of order $n$, then the degree of $T_{\frak G,v}(X)$ can be strictly less than $n-1$.

A similar analysis can be done plugging the directed path in reverse order, that is, if for any nonnegative integer $i$ we take $i+1$ vertices $z_0,z_1,\dots,z_i$ with $z_0=v$ and $V\cap\{z_1,\dots,z_i\}=\emptyset$ and we consider the directed path  $P^{\star}_{i+1}$ with $i+1$ vertices $\{z_0,z_1,\dots,z_i\}$ and arcs $(z_i,z_{i-1}),\dots,(z_1,z_0)$, we can define the digraph $\frak G^{\star}_i=\frak G+P^{\star}_{i+1}$, in the way shown in the following two figures:

\begin{figure}[H]
\caption{Digraph $\frak G$}
\includegraphics[width=5 in]{digraph3.jpg}
\end{figure}

\begin{figure}[H]
\caption{Digraph $\frak G$ with infinite path adjoined in reverse order}
\includegraphics[width=5 in]{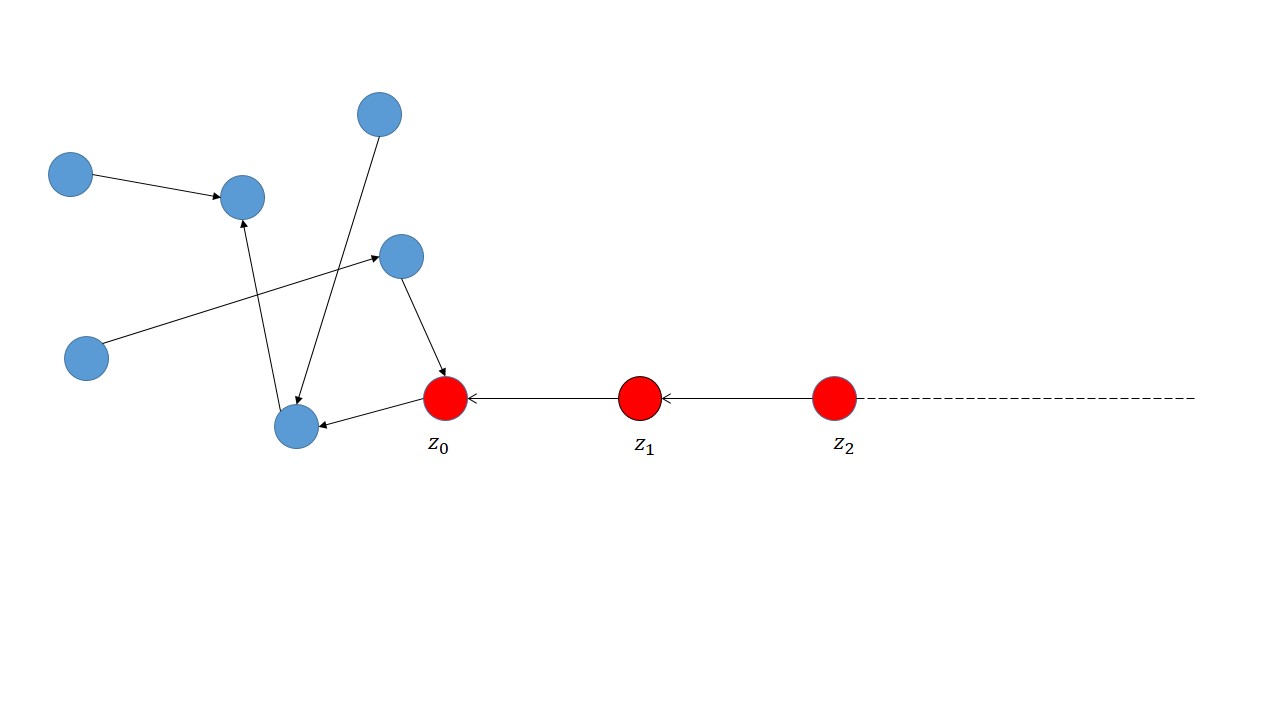}
\end{figure}

Now we have the following theorem, dual of Theorem \ref{rcp}:

\begin{theorem} Let $\frak G=(V,A)$ be a digraph of order $n$, and let $v\in V$. Then there exist a unique polynomial $T^{\star}_{\frak G,v}(X)\in\Bbb Q[X]$ with $\deg(T^{\star}_{\frak G,v}(X))\leq n-1$ such that $$F_{\{\frak G^{\star}_i\}_{i\in\Bbb Z^+}}(X)=T^{\star}_{\frak G,v}(X)\exp(X).$$ Besides, the polynomials $T^{\star}_{\frak G,v}(X)$ satisfy the recurrence relation $$T^{\star}_{P_1,v}(X)=1$$ and $$\frac{d}{dX} T^{\star}_{\frak G,v}(X)=\sum_{w\in\max(\frak G)-\{v\}}(T^{\star}_{\frak G-w,v}(X)+\frac{d}{dX} T^{\star}_{\frak G-w,v}(X))$$ with $$T^{\star}_{\frak G,v}(0)=\sigma(\frak G)$$
\end{theorem}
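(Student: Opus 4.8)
The plan is to deduce this dual theorem from Theorem \ref{rcp} by a reversal (duality) argument, rather than repeating the entire induction. The key observation is that adjoining the reversed path $P^{\star}_{i+1}$ to $\frak G$ at $v$ and then reversing every arc produces exactly the digraph obtained by adjoining the forward path $P_{i+1}$ to the reverse digraph $\frak G^r$ at $v$; that is, $(\frak G^{\star}_i)^r=(\frak G^r)_i$, where the right-hand side is the construction of Theorem \ref{rcp} applied to $\frak G^r$. Indeed, reversing the arcs of $\frak G$ gives $\frak G^r$, and reversing the arcs $(z_i,z_{i-1}),\dots,(z_1,z_0)$ of $P^{\star}_{i+1}$ gives precisely $(z_0,z_1),\dots,(z_{i-1},z_i)$, the arcs of $P_{i+1}$, while the distinguished vertex $z_0=v$ is unchanged (and the base case $i=0$ reads $(\frak G^{\star}_0)^r=\frak G^r=(\frak G^r)_0$).

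By Proposition \ref{countreverse} the counter is invariant under reversal, so $\sigma(\frak G^{\star}_i)=\sigma((\frak G^{\star}_i)^r)=\sigma((\frak G^r)_i)$ for every $i\in\Bbb Z^+$, and hence the two generatrix functions coincide: $F_{\{\frak G^{\star}_i\}_{i\in\Bbb Z^+}}(X)=F_{\{(\frak G^r)_i\}_{i\in\Bbb Z^+}}(X)$. Applying Theorem \ref{rcp} to $\frak G^r$ (also of order $n$) and the vertex $v$ furnishes a unique polynomial $T_{\frak G^r,v}(X)\in\Bbb Q[X]$ of degree at most $n-1$ with $F_{\{(\frak G^r)_i\}_{i\in\Bbb Z^+}}(X)=T_{\frak G^r,v}(X)\exp(X)$. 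I would therefore set $T^{\star}_{\frak G,v}(X):=T_{\frak G^r,v}(X)$; existence, uniqueness, and the degree bound then follow immediately, and $T^{\star}_{P_1,v}(X)=T_{P_1,v}(X)=1$ since $P_1^r=P_1$.

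It remains to transport the recurrence. From the proof of Proposition \ref{countreverse} one has $\min(\frak G^r)=\max(\frak G)$, and removing a vertex commutes with reversal, $(\frak G-w)^r=\frak G^r-w$, so that $T_{\frak G^r-w,v}(X)=T^{\star}_{\frak G-w,v}(X)$ for each $w$. Substituting these two identities into the recurrence of Theorem \ref{rcp} for $T_{\frak G^r,v}(X)$ yields exactly the claimed recurrence for $T^{\star}_{\frak G,v}(X)$, with the summation now ranging over $\max(\frak G)-\{v\}$. The initial value is $T^{\star}_{\frak G,v}(0)=T_{\frak G^r,v}(0)=\sigma(\frak G^r)=\sigma(\frak G)$, once more by Proposition \ref{countreverse}.

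Since the whole argument is a formal transport along reversal, there is no substantial obstacle; the only points requiring care are the bookkeeping identities $(\frak G^{\star}_i)^r=(\frak G^r)_i$ and $(\frak G-w)^r=\frak G^r-w$, together with the fact that reversal fixes the distinguished vertex $v$ and preserves the order $n$. As an alternative, one could mirror the inductive proof of Theorem \ref{rcp} line by line, exchanging $\min$ for $\max$ and invoking part 2 of Theorem \ref{rec} in place of part 1; the duality route is shorter and makes the underlying symmetry transparent.
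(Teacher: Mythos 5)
Your proposal is correct and is essentially the paper's own proof: the paper likewise deduces the dual theorem by applying Theorem~\ref{rcp} to $\frak G^r$ together with Proposition~\ref{countreverse} and the identity $\min(\frak G^r)=\max(\frak G)$. Your write-up merely makes explicit the bookkeeping identities $(\frak G^{\star}_i)^r=(\frak G^r)_i$ and $(\frak G-w)^r=\frak G^r-w$, which the paper leaves implicit.
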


\begin{proof} Use Theorem \ref{rcp} with $\frak G^r$ and Proposition \ref{countreverse}, and the fact that $\min(\frak G^r)=\max(\frak G)$.
\end{proof}

\section{Some differential equations associated to companion polynomials of digraphs}\label{deacp}

We will begin establishing the fundamental connection of companion polynomials of digraphs with Laguerre equation:

We will consider, for each $j\in\Bbb Z^+$, the directed path $P_{j+1}$ with vertex set $\{v_1,\dots,v_{j+1}\}$ and arc set $\{(v_1,v_2),\dots,(v_j,v_{j+1})\}$. If we consider the companion polynomial $T_j(X)=T(P_{j+1},v_1)(X)$ then we will obtain a recurrence relation for the polynomials $T_j(X)$ in the following proposition:

\begin{proposition}\label{cpolpi}
$$T_0(X)=1$$ and, for $j\geq 1$,
$$\frac{d}{dX}T_j(X)=\frac{d}{dX}T_{j-1}(X)+T_{j-1}(X)\text{ and }T_j(0)=1.$$
\end{proposition}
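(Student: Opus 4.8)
The plan is to derive everything as a direct specialization of Theorem~\ref{rcp} to the case $\frak G = P_{j+1}$ and $v = v_1$, so that by definition $T_j(X) = T_{P_{j+1},v_1}(X)$. First I would dispose of the base case: when $j=0$ the digraph $P_{j+1}=P_1$ consists of the single vertex $v_1$, and the first line of the recurrence in Theorem~\ref{rcp} gives $T_0(X)=T_{P_1,v_1}(X)=1$.

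The heart of the argument is a combinatorial identification of $\min(P_{j+1})$. Since the arcs of $P_{j+1}$ are $(v_1,v_2),\dots,(v_j,v_{j+1})$, each vertex $v_i$ with $i\le j$ carries the outgoing arc $(v_i,v_{i+1})$, so the unique sink is $v_{j+1}$; hence $\min(P_{j+1})=\{v_{j+1}\}$. For $j\ge 1$ the distinguished vertex $v_1$ has the outgoing arc $(v_1,v_2)$ and is therefore not a minimum point, so $\min(P_{j+1})-\{v_1\}=\{v_{j+1}\}$ and the sum in the recurrence of Theorem~\ref{rcp} reduces to a single term. The second observation I would use is that deleting the sink yields $P_{j+1}-v_{j+1}=P_j$ on the vertices $\{v_1,\dots,v_j\}$, with $v_1$ still the distinguished vertex, whence $T_{P_{j+1}-v_{j+1},v_1}(X)=T_{j-1}(X)$.

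Substituting these two facts into the derivative clause of Theorem~\ref{rcp} then gives immediately
$$\frac{d}{dX}T_j(X)=T_{j-1}(X)+\frac{d}{dX}T_{j-1}(X),$$
which is the asserted relation. Finally, the initial value follows from the clause $T_{\frak G,v}(0)=\sigma(\frak G)$ of Theorem~\ref{rcp} together with Proposition~\ref{contpath}, namely $T_j(0)=\sigma(P_{j+1})=1$.

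There is no genuine obstacle in this proof, as it is a specialization of the already established Theorem~\ref{rcp}; the only point requiring a moment of care is confirming that $v_1\notin\min(P_{j+1})$ for $j\ge 1$, which is what guarantees that removing $v_1$ from $\min(P_{j+1})$ leaves the single sink $v_{j+1}$ intact, so that the sum collapses to exactly one summand and the clean recurrence results.
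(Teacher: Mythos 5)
Your proof is correct and takes essentially the same approach as the paper's: both specialize Theorem~\ref{rcp} to $\frak G=P_{j+1}$ with $v=v_1$, using the two key facts $\min(P_{j+1})=\{v_{j+1}\}$ and $P_{j+1}-v_{j+1}=P_j$, with the initial value $T_j(0)=\sigma(P_{j+1})=1$ coming from Proposition~\ref{contpath}. Your explicit verification that $v_1\notin\min(P_{j+1})$ for $j\ge 1$, so that the sum in the recurrence collapses to the single term indexed by $v_{j+1}$, merely spells out a detail the paper leaves implicit.
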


\begin{proof} By induction on $i$. The base of the induction is obvious, and the step is proved by using Theorem \ref{rcp}, having into account that $\min(P_{j+1})=\{v_{j+1}\}$ and $P_{j+1}-v_{j+1}=P_j$.
\end{proof}

\begin{theorem}\label{jtlp} $T_j(X)=L_j(-X)\forall j\in\Bbb Z^+$, where $L_j$ is the $j$-th Laguerre polynomial.
\end{theorem}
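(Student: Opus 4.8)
The plan is to show that the sequence $L_j(-X)$ satisfies precisely the recurrence and initial conditions that Proposition \ref{cpolpi} established for $T_j(X)$, and then to conclude equality by the evident uniqueness of that recurrence. First I would recall the classical differential recurrence for the Laguerre polynomials in the standard normalization, namely
\begin{equation*}
L_0(x)=1,\qquad L_j(0)=1,\qquad \frac{d}{dx}L_j(x)=\frac{d}{dx}L_{j-1}(x)-L_{j-1}(x)\quad(j\geq 1).
\end{equation*}
This identity is a routine consequence of the Rodrigues formula (or of the generating function $\sum_{j\geq 0}L_j(x)t^j=(1-t)^{-1}\exp(-xt/(1-t))$), and I would either cite it or derive it in one line; it is the only nontrivial classical input.

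Next I would set $g_j(X)=L_j(-X)$ and simply transport the above relation through the substitution $x\mapsto -X$. Since $\frac{d}{dX}g_j(X)=-\,L_j'(-X)$, plugging $x=-X$ into the Laguerre recurrence gives $L_j'(-X)=L_{j-1}'(-X)-L_{j-1}(-X)$, whence
\begin{equation*}
\frac{d}{dX}g_j(X)=-L_{j-1}'(-X)+L_{j-1}(-X)=\frac{d}{dX}g_{j-1}(X)+g_{j-1}(X).
\end{equation*}
Moreover $g_0(X)=L_0(-X)=1$ and $g_j(0)=L_j(0)=1$. Thus $g_j$ obeys exactly the recurrence $\frac{d}{dX}g_j=\frac{d}{dX}g_{j-1}+g_{j-1}$ with $g_j(0)=1$ and $g_0=1$ of Proposition \ref{cpolpi}.

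It then remains to observe that this recurrence determines the sequence uniquely: given $T_{j-1}$, the relation fixes $\frac{d}{dX}T_j$ completely, hence $T_j$ up to an additive constant, and the constant is pinned down by $T_j(0)=1$. A straightforward induction on $j$, with base case $T_0=1=g_0$, therefore yields $T_j(X)=g_j(X)=L_j(-X)$ for all $j\in\Bbb Z^+$, which is the claim.

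An equivalent and perhaps cleaner route, which I would mention as an alternative, is to solve the recurrence of Proposition \ref{cpolpi} directly at the level of the bivariate generating function $G(X,t)=\sum_{j\geq 0}T_j(X)t^j$: the recurrence translates into $(1-t)\,\partial_X G=tG$ with $G(0,t)=\sum_j t^j=(1-t)^{-1}$, giving $G(X,t)=(1-t)^{-1}\exp(tX/(1-t))$, which is visibly the Laguerre generating function evaluated at $-X$. The main obstacle here is not analytic but bookkeeping: one must select the correct one among the several standard Laguerre recurrences (the three-term recurrence in the index, the second-order differential equation, and the various derivative relations) and keep the sign under $x\mapsto -X$ consistent throughout; once the differential recurrence above is in hand, the argument is essentially immediate.
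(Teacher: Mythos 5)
Your proof is correct and takes essentially the same route as the paper's: both rest on the classical differential recurrence $L_0(x)=1$, $\frac{d}{dx}L_j(x)=\frac{d}{dx}L_{j-1}(x)-L_{j-1}(x)$, $L_j(0)=1$ (the paper cites it via the Sheffer-sequence property), and both transport it through $x\mapsto -X$ to see that $L_j(-X)$ obeys exactly the recurrence and initial conditions of Proposition~\ref{cpolpi}. You merely spell out the uniqueness/induction step that the paper leaves implicit, and your generating-function variant is a correct optional addition.
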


\begin{proof} It is well-known that Laguerre polynomials form a Sheffer sequence (\cite[Chapter 16]{M1992}) in which \begin{equation}\label{reclaguerre}L_0(X)=1\text{ and }\frac{d}{dX}L_j(X)=\frac{d}{dX}L_{j-1}(X)-L_{j-1}(X)\forall j\geq 1, L_j(0)=1\ \forall j\geq 1.\end{equation} Since $$T_0(X)=L_0(-X)=1$$ and also $$\frac{d}{dX}(L_j(-X))=-(\frac{d}{dX}L_j)(-X)$$ and $$\frac{d}{dX}(L_{j-1}(-X))=-(\frac{d}{dX}L_{j-1})(-X),$$ the recurrence relation of Proposition \ref{cpolpi} is the same that the corresponding one in (\ref{reclaguerre}).
\end{proof}

Laguerre polynomials are solutions of Laguerre's equation $$xy^{\prime\prime}+(1-x)y^{\prime}+ny=0.$$ This equation has nonsingular solutions only if $n$ is a non-negative integer. They are orthogonal polynomials with respect to the inner product $$<f,g>=\int_0^{\infty} f(x)g(x) exp(-x)\ dx.$$ Laguerre polynomials have important applications in quantum mechanics, where they are used in the description of the static Wigner functions of oscillator systems, in the 3D isotropic harmonic oscillator and in Morse potential, as well as in the Schrodinger equation for the hydrogen-like atom.

We will give now an alternative characterization of Laguerre polynomials. We will call it the tabloidal form of $L_j(X)$:

\begin{theorem}\label{tfp} $$L_j(X)=(\sum_{i=0}^{\infty}\frac{\binom{i+j}{i}}{i!}(-X)^i)\exp(X)\forall j\in\Bbb Z^+.$$
\end{theorem}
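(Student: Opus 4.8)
The plan is to read off the statement as an immediate consequence of the tabloidal meaning of the companion polynomial $T_j(X)=T_{P_{j+1},v_1}(X)$ together with the identification $T_j(X)=L_j(-X)$ from Theorem~\ref{jtlp}. First I would invert that identification: since $T_j(X)=L_j(-X)$ for all $j\in\Bbb Z^+$, substituting $-X$ for $X$ gives $L_j(X)=T_j(-X)$. So it suffices to produce a closed expression for $T_j(-X)$, and this is exactly what the generating-function definition of $T_j$ supplies. By the definition of the companion polynomial (Theorem~\ref{rcp}) we have
\[
T_j(X)\exp(X)=F_{\{(P_{j+1})_i\}_{i\in\Bbb Z^+}}(X)=\sum_{i=0}^{\infty}\frac{\sigma(\frak G_i)}{i!}X^i,
\]
where $\frak G_i=P_{j+1}+P_{i+1}$ is obtained by attaching the directed path $z_0=v_1,z_1,\dots,z_i$ at the distinguished vertex $v_1$. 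Everything then reduces to computing the counters $\sigma(\frak G_i)$.

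The key step is to show that $\sigma(\frak G_i)=\binom{i+j}{i}$. I would argue as follows. In $\frak G_i$ the vertex $v_1$ is the unique source: it carries the two outgoing arcs $(v_1,v_2)$ and $(v_1,z_1)$ and has no incoming arc, while every other vertex receives an arc. Hence $\max(\frak G_i)=\{v_1\}$, and part~2 of Theorem~\ref{rec} gives $\sigma(\frak G_i)=\sigma(\frak G_i-v_1)$. Now $\frak G_i-v_1$ is the disjoint union of the directed path $v_2,\dots,v_{j+1}$, which is a copy of $P_j$, and the directed path $z_1,\dots,z_i$, which is a copy of $P_i$. Applying Theorem~\ref{contcc} to these two connected components, and then Proposition~\ref{contpath} (so that $\sigma(P_j)=\sigma(P_i)=1$), yields
\[
\sigma(\frak G_i)=\binom{j+i}{j,\,i}\,\sigma(P_j)\,\sigma(P_i)=\binom{i+j}{i}.
\]
The cases $i=0$ (where $\frak G_0=P_{j+1}$ and $\sigma=1=\binom{j}{0}$) are absorbed into the same formula, so no separate bookkeeping is needed.

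Substituting this value back, I obtain $T_j(X)=\exp(-X)\sum_{i=0}^{\infty}\frac{\binom{i+j}{i}}{i!}X^i$, and therefore
\[
L_j(X)=T_j(-X)=\Bigl(\sum_{i=0}^{\infty}\frac{\binom{i+j}{i}}{i!}(-X)^i\Bigr)\exp(X),
\]
which is the asserted tabloidal form. The only genuinely substantive point is the counter computation $\sigma(\frak G_i)=\binom{i+j}{i}$, i.e.\ correctly recognizing the head–tail digraph as two directed chains sharing the common source $v_1$; I expect that to be the main obstacle, and the delicate part there is handling the shared vertex $v_1$ so that the order of $\frak G_i$ is $i+j+1$ and removing the source splits it cleanly into $P_j\,\dot\cup\,P_i$. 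The remaining manipulations (the $X\mapsto-X$ substitution and the division by $\exp(X)$) are routine.
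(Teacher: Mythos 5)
Your proof is correct and takes essentially the same route as the paper's: both identify $v_1$ as the unique maximum point, apply part~2 of Theorem~\ref{rec} to reduce to $\frak G_i-v_1$, which splits into the two paths of lengths $j$ and $i$, and then invoke Theorem~\ref{contcc} together with $\sigma(P_n)=1$ to get $\sigma(\frak G_i)=\binom{i+j}{i}$, finishing via Theorem~\ref{rcp} and the identification $T_j(X)=L_j(-X)$ of Theorem~\ref{jtlp}. Your handling of the $i=0$ edge case and the explicit verification that $\max(\frak G_i)=\{v_1\}$ are fine and match the paper's (more tersely stated) argument.
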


\begin{proof} Following the notation before Theorem \ref{rcp}, if $j\in\Bbb Z^+$, $P_{j+1}$ is the directed path on the vertices $v_1,\dots,v_{j+1}$ and $P_{i+1}$ is the directed path on the vertices $v_1,z_1,\dots,z_i$, it follows from part 2 of Theorem \ref{rec} that $\sigma(P_{j+1}+P_{i+1})=\sigma((P_{j+1}-\{v_1\})+(P_{i+1}-\{v_1\}))$, and since $(P_{j+1}-\{v_1\})+(P_{i+1}-\{v_1\})$ has two connected components which are paths of lengths $j$ and $i$, respectively, then we deduce from Theorem \ref{contcc}, using that the counter of a directed path is $1$, that $$\sigma(P_{j+1}+P_{i+1})=\binom{i+j}{i}.$$ Now, we deduce from Theorem \ref{rcp} that $$\sum_{i=0}^{\infty}\frac{\binom{i+j}{i}}{i!}X^i=T_j(X)exp(X),$$ and the result follows immediately from this identity and Theorem \ref{jtlp}.
\end{proof}

The directed path is not the only digraph whose companion polynomial satisfies Laguerre's equation:

\begin{theorem} If $\frak G$ is a digraph of order $n$ associated to a rooted tree with root $v$, then $$T_{\frak G,v}(X)=\sigma(\frak G)L_{n-1}(-X).$$
\end{theorem}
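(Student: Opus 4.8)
The plan is to prove the statement by induction on the order $n$ of the rooted tree $\frak G$, feeding the recurrence for companion polynomials from Theorem \ref{rcp} into the Laguerre recurrence already isolated in the proofs of Proposition \ref{cpolpi} and Theorem \ref{jtlp}. First I would record the two combinatorial facts about the digraph of a rooted tree that make everything work. Since the arcs of a rooted tree point away from the root, a vertex is a minimum point (sink) exactly when it has no children, i.e. when it is a leaf; hence $\min(\frak G)$ is precisely the set of leaves of $\frak G$. For $n\geq 2$ the root $v$ has at least one child, so $v\notin\min(\frak G)$ and the sum in the recurrence of Theorem \ref{rcp} ranges over all leaves. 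Moreover, deleting a leaf $w$ yields a rooted tree $\frak G-w$ of order $n-1$ with the same root $v$, so the induction hypothesis applies to each $\frak G-w$. The base case $n=1$ is immediate: $\frak G=P_1$ and $T_{P_1,v}(X)=1=\sigma(P_1)L_0(-X)$.

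For the inductive step, write $d$ for $\tfrac{d}{dX}$. By the induction hypothesis $T_{\frak G-w,v}(X)=\sigma(\frak G-w)L_{n-2}(-X)$ for every leaf $w$, and the crucial point is that the Laguerre degree $n-2$ is the \emph{same} for every $w$. Substituting into the recurrence of Theorem \ref{rcp} gives
$$d\,T_{\frak G,v}(X)=\Big(\sum_{w\in\min(\frak G)}\sigma(\frak G-w)\Big)\big(L_{n-2}(-X)+d\,L_{n-2}(-X)\big).$$
The inner sum collapses to $\sigma(\frak G)$ by part 1 of Theorem \ref{rec}, and the factor $L_{n-2}(-X)+d\,L_{n-2}(-X)$ equals $d\,L_{n-1}(-X)$ by the Laguerre recurrence (\ref{reclaguerre}) applied at $-X$, which is exactly the identity underlying Proposition \ref{cpolpi} and Theorem \ref{jtlp}. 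Hence $d\,T_{\frak G,v}(X)=\sigma(\frak G)\,d\,L_{n-1}(-X)$.

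Finally I would integrate and pin down the constant of integration: the previous identity yields $T_{\frak G,v}(X)=\sigma(\frak G)L_{n-1}(-X)+C$, and evaluating at $X=0$ using $T_{\frak G,v}(0)=\sigma(\frak G)$ (from Theorem \ref{rcp}) together with $L_{n-1}(0)=1$ forces $C=0$, giving the claim.

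I do not expect a genuine obstacle here; the one step worth stating carefully rather than glossing over is the uniformity observation in the inductive step. Because every leaf-deletion returns a tree of the same order $n-1$, the induction hypothesis hands back the same polynomial $L_{n-2}$ in every summand, and this is precisely what allows the common factor $L_{n-2}(-X)+d\,L_{n-2}(-X)$ to be pulled out of the sum and the remaining $\sum_{w}\sigma(\frak G-w)$ to be recombined into $\sigma(\frak G)$ via Theorem \ref{rec}. If instead the descendants had varying orders, this factoring would fail, so the argument genuinely relies on removing a single sink at a time rather than, say, peeling off whole descendant subtrees at once.
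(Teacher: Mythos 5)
Your proof is correct and follows essentially the same route as the paper's: induction on $n$, observing that for $n\geq 2$ the root is not a sink so the recurrence of Theorem~\ref{rcp} sums over all of $\min(\frak G)$, applying the induction hypothesis to each leaf-deleted tree, collapsing $\sum_{w}\sigma(\frak G-w)$ to $\sigma(\frak G)$ via Theorem~\ref{rec}, converting $L_{n-2}(-X)+\frac{d}{dX}L_{n-2}(-X)$ into $\frac{d}{dX}L_{n-1}(-X)$ via Proposition~\ref{cpolpi} and Theorem~\ref{jtlp}, and matching constant terms at $X=0$. Your explicit remark on the uniformity of the degree $n-2$ across all leaf deletions is a point the paper leaves implicit, but it does not change the argument.
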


\begin{proof} We will prove it by induction on $n$. It is obvious if $n=1$. Let us suppose that $n>1$ and that it is true for $n-1$. Then, $\min(\frak G)-\{v\}=\min(\frak G)$, and for any $w\in\min(\frak G)$ the digraph $G-w$ is associated to a rooted tree with root $v$.

Now, by Theorem \ref{rcp} and the induction hypothesis we have $$\frac{d}{dX}T_{\frak G,v}(X)=\sum_{w\in\min(\frak G)}(\sigma(\frak G-w)L_{n-2}(-X)+\sigma(\frak G-w)\frac{d}{dX}L_{n-2}(-X)).$$ By Theorem \ref{rec}, this equals$$\sigma(\frak G)L_{n-2}(-X)+\sigma(\frak G)\frac{d}{dX} L_{n-2}(-X).$$ Factoring out $\sigma(\frak G)$, we obtain $$\sigma(\frak G)(L_{n-2}(-X)+\frac{d}{dX}L_{n-2}(-X))$$ and, by Proposition \ref{cpolpi}, this is $$\sigma(\frak G)\frac{d}{dX}L_{n-1}(-X).$$ Finally, the desired equality follows from the fact that both polynomials in the statement of this theorem have the same independent term.
\end{proof}

Now we will derive from empty digraphs a particular case of generalized Laguerre equation.

We will consider, for each $j\in\Bbb Z^+$, the empty graph $E_{j+1}$ with vertex set $\{v_1,\dots,v_{j+1}\}$ and an empty arc set. If we consider the companion polynomial $T_j(X)=T(E_{j+1},v_1)(X)$ then we will obtain a recurrence relation for the polynomials $T_j(X)$ in the following proposition:

\begin{proposition}\label{cpolpi2}
$$T_0(X)=1$$ and, for $j\geq 1$,
$$\frac{d}{dX}T_j(X)=j(\frac{d}{dX}T_{j-1}(X)+T_{j-1}(X))\text{ and }T_j(0)=(j+1)!.$$
\end{proposition}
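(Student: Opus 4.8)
The plan is to apply the recurrence of Theorem~\ref{rcp} directly to the empty digraph $E_{j+1}$ distinguished at the vertex $v_1$. First I would observe that $E_{j+1}$ has no arcs, so every vertex is a minimum point; thus $\min(E_{j+1})=\{v_1,\dots,v_{j+1}\}$ and $\min(E_{j+1})-\{v_1\}=\{v_2,\dots,v_{j+1}\}$, a set of exactly $j$ vertices. For each such $w$, the digraph $E_{j+1}-w$ is the empty digraph on the remaining $j$ vertices, one of which is $v_1$; hence it is isomorphic, as a rooted digraph, to $E_j$ distinguished at $v_1$. Since the companion polynomial depends only on the isomorphism type of the pair (digraph, distinguished vertex) --- both $\sigma$ and the defining recurrence being invariant under relabeling of vertices --- we obtain $T_{E_{j+1}-w,\,v_1}(X)=T_{E_j,\,v_1}(X)=T_{j-1}(X)$ for every $w\in\{v_2,\dots,v_{j+1}\}$.

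With this identification, the recurrence of Theorem~\ref{rcp} collapses into a sum of $j$ identical terms, giving
$$\frac{d}{dX}T_j(X)=\sum_{w\in\{v_2,\dots,v_{j+1}\}}\left(T_{j-1}(X)+\frac{d}{dX}T_{j-1}(X)\right)=j\left(\frac{d}{dX}T_{j-1}(X)+T_{j-1}(X)\right),$$
which is exactly the claimed derivative relation. The initial condition $T_j(0)=\sigma(E_{j+1})$ supplied by Theorem~\ref{rcp}, combined with Corollary~\ref{counten}, which gives $\sigma(E_{j+1})=(j+1)!$, yields $T_j(0)=(j+1)!$. For the base case, $E_1=P_1$ is a single vertex, so $T_0(X)=T_{P_1,\,v_1}(X)=1$, once more by Theorem~\ref{rcp}.

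The argument is essentially a direct specialization of the general recurrence, so no analytic difficulty arises. The one point meriting care is the isomorphism-invariance used to replace all $j$ summands by the single polynomial $T_{j-1}(X)$; I expect this to be the main (though minor) thing to make rigorous. It can be dispatched by noting that a bijection of vertex sets carrying arcs to arcs induces a bijection of dispositions, hence preserves $\sigma$ of every induced subdigraph and therefore preserves the entire recurrence defining the companion polynomial. Once that is in hand, the two displayed identities together with the base case complete the verification and the proposition follows.
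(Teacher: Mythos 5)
Your proposal is correct and follows essentially the same route as the paper's proof: apply Theorem~\ref{rcp} at $v_1$, observe that $\min(E_{j+1})=\{v_1,\dots,v_{j+1}\}$ and $E_{j+1}-v_k=E_j$ up to relabeling, and use Corollary~\ref{counten} for the initial condition. Your explicit justification of the isomorphism-invariance of $T_{\frak G,v}$ (via a bijection of dispositions preserving every counter $\sigma(\frak G_i)$) merely spells out what the paper dismisses with ``with a different labeling of the vertices, of course,'' so there is no substantive difference.
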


\begin{proof} By induction on $j$. The base of the induction is obvious, and the step is proved by using Theorem \ref{rcp}, having into account that $\min(E_{j+1})=\{v_1,\dots,v_{j+1}\}$ and that for $k\in\{2,\dots,j+1\}$ it holds that $E_{j+1}-v_k=E_j$ (with a different labeling of the vertices, of course), and using also Corollary \ref{counten}.
\end{proof}

In what follows we will put for every $j\in\Bbb Z^+$ \begin{equation}\label{coeftj}T_j(X)=\sum_k a_{j,k} X^k,\end{equation} and we will convey that $a_{j,k}=0$ when $k<0$ and when $k>j$.

\begin{lemma}\label{recurrcoefs} $$k a_{j,k}=j(k a_{j-1,k}+a_{j-1,k-1})\ \forall j\in\Bbb N,\forall k\in\Bbb Z.$$
\end{lemma}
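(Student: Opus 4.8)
The plan is to derive the claimed coefficient recurrence directly from the differential recurrence of Proposition~\ref{cpolpi2} by expanding everything as a power series and matching coefficients. Writing $T_j(X)=\sum_k a_{j,k}X^k$ as in~(\ref{coeftj}), I would substitute this expansion into the identity
$$\frac{d}{dX}T_j(X)=j\Bigl(\frac{d}{dX}T_{j-1}(X)+T_{j-1}(X)\Bigr)$$
and then read off the coefficient of a single monomial on each side.

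First I would differentiate termwise. The left-hand side becomes $\frac{d}{dX}T_j(X)=\sum_k k\,a_{j,k}X^{k-1}$, so the coefficient of $X^{k-1}$ there is exactly $k\,a_{j,k}$. On the right-hand side I have $j$ times the sum of two series: $\frac{d}{dX}T_{j-1}(X)=\sum_m m\,a_{j-1,m}X^{m-1}$ and $T_{j-1}(X)=\sum_m a_{j-1,m}X^m$. The first of these contributes $k\,a_{j-1,k}$ to the coefficient of $X^{k-1}$ (take $m=k$), and the second contributes $a_{j-1,k-1}$ to the coefficient of $X^{k-1}$ (take $m=k-1$). Hence the coefficient of $X^{k-1}$ on the right is $j\bigl(k\,a_{j-1,k}+a_{j-1,k-1}\bigr)$.

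Equating the coefficients of $X^{k-1}$ on the two sides yields precisely
$$k\,a_{j,k}=j\bigl(k\,a_{j-1,k}+a_{j-1,k-1}\bigr),$$
which is the statement of the lemma. To see that the identity holds for every integer $k$ (and not merely for those in the support of the polynomials), I would invoke the stated convention that $a_{j,k}=0$ whenever $k<0$ or $k>j$: outside the relevant range both sides vanish identically, and the reindexing $m\mapsto k-1$ remains valid because the series are in fact finite sums padded by zeros. This last bookkeeping with the boundary conventions is the only point requiring any care; the calculation itself is a routine termwise comparison, so I do not anticipate a genuine obstacle beyond keeping the two reindexings straight.
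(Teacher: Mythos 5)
Your proposal is correct and follows essentially the same route as the paper's proof: both substitute the expansion $T_j(X)=\sum_k a_{j,k}X^k$ into the differential recurrence of Proposition~\ref{cpolpi2}, differentiate termwise, and equate coefficients (the paper matches the coefficient of $X^k$ and then reindexes $k+1\mapsto k$, whereas you match the coefficient of $X^{k-1}$ directly, which is the same computation). Your remark about the boundary convention $a_{j,k}=0$ for $k<0$ or $k>j$ ensuring validity for all $k\in\Bbb Z$ is a sound piece of bookkeeping that the paper leaves implicit.
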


\begin{proof} We have by Proposition \ref{cpolpi2} that \begin{equation}\label{rtpjtpjmutjmu}\frac{d}{dX}T_j(X)=j(\frac{d}{dX}T_{j-1}(X)+T_{j-1}(X)).\end{equation} We deduce from (\ref{coeftj}) that \begin{equation}\label{coeftjmo}T_{j-1}(X)=\sum_k a_{j-1,k} X^k.\end{equation} By taking derivatives in (\ref{coeftj}) and in (\ref{coeftjmo}) we obtain \begin{equation}\label{dcoeftj}\frac{d}{dX}T_j(X)=\sum_k k a_{j,k} X^{k-1}=\sum_k (k+1) a_{j,k+1} X^k\end{equation} and \begin{equation}\label{dcoeftjmo}\frac{d}{dX}T_{j-1}(X)=\sum_k k a_{j-1,k} X^{k-1}=\sum_k (k+1) a_{j-1,k+1} X^k.\end{equation} Now we deduce from (\ref{rtpjtpjmutjmu}),(\ref{coeftjmo}),(\ref{dcoeftj}) and (\ref{dcoeftjmo}) that $$(k+1) a_{j,k+1}=j((k+1) a_{j-1,k+1}+a_{j-1,k}),$$ and reindexing the $k+1$ we get the desired result.
\end{proof}

\begin{lemma} $$a_{j,1}=\frac{j(j+1)!}{2}\ \forall j\in\Bbb Z^+.$$
\end{lemma}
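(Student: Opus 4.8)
The plan is to prove the formula by induction on $j$, reducing everything to the special case $k=1$ of the recurrence established in Lemma \ref{recurrcoefs}. The base case $j=0$ is immediate: since $T_0(X)=1$ by Proposition \ref{cpolpi2}, the linear coefficient vanishes, so $a_{0,1}=0$, which agrees with $\frac{0\cdot 1!}{2}=0$.

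For the inductive step, I would first specialize Lemma \ref{recurrcoefs} to $k=1$, which gives
$$a_{j,1}=j\bigl(a_{j-1,1}+a_{j-1,0}\bigr).$$
The crucial auxiliary fact is that the constant term of $T_{j-1}$ is known in closed form: by Proposition \ref{cpolpi2} we have $a_{j-1,0}=T_{j-1}(0)=j!$ (note the index shift, so this is $j!$ rather than $(j-1)!$). Substituting this value together with the inductive hypothesis $a_{j-1,1}=\frac{(j-1)\,j!}{2}$ then yields
$$a_{j,1}=j\left(\frac{(j-1)\,j!}{2}+j!\right).$$

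It remains only to simplify the right-hand side. Factoring out $j\cdot j!$ gives $a_{j,1}=j\cdot j!\cdot\frac{(j-1)+2}{2}=j\cdot j!\cdot\frac{j+1}{2}=\frac{j(j+1)!}{2}$, which completes the induction. The only step requiring any care is this routine algebraic manipulation together with correct bookkeeping of the index shifts (in particular using $a_{j-1,0}=j!$); no genuine obstacle arises, since the entire argument is a one-line specialization of the previously proved coefficient recurrence.
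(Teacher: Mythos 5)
Your proof is correct and follows essentially the same route as the paper's: induction on $j$, specializing Lemma \ref{recurrcoefs} to $k=1$ to get $a_{j,1}=j(a_{j-1,1}+a_{j-1,0})$, and substituting $a_{j-1,0}=j!$ from Proposition \ref{cpolpi2} together with the inductive hypothesis. Your handling of the index shift and the explicit base case are both accurate, so nothing further is needed.
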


\begin{proof} We will prove it by induction on $j$. For $j=0$, it holds trivially. Let us suppose that it is true for $j-1$. By the previous lemma, $$a_{j,1}=j(a_{j-1,1}+a_{j-1,0})$$ and since by Proposition \ref{cpolpi2} $$a_{j-1,0}=j!$$ and by the inductive hypothesis $$a_{j-1,1}=\frac{(j-1)j!}{2},$$ we obtain that $$a_{j,1}=j(\frac{(j-1)j!}{2}+j!)=j\cdot j!(\frac{j+1}{2})=\frac{j(j+1)!}{2}.$$
\end{proof}

\begin{lemma} $$k(k+1)a_{j,k}=(j+1-k)a_{j,k-1}\ \forall j\in\Bbb Z^+,\forall k\in\Bbb Z$$
\end{lemma}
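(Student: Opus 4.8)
The plan is to prove the identity by induction on $j$, using as the two engines the recurrence of Lemma~\ref{recurrcoefs}, namely $k a_{j,k}=j(k a_{j-1,k}+a_{j-1,k-1})$, together with the inductive hypothesis applied at the level $j-1$. Write $P(j)$ for the assertion that $k(k+1)a_{j,k}=(j+1-k)a_{j,k-1}$ for every $k\in\Bbb Z$. The base case $P(0)$ is immediate, since $a_{0,k}=0$ for $k\neq 0$. Before touching the induction proper, I would dispose of the boundary ranges: for $k\le 0$ and for $k>j+1$ the assertion is trivial because the relevant coefficients vanish; the case $k=j+1$ is trivial because then $a_{j,j+1}=0$ and $j+1-k=0$; and the case $k=1$ is exactly the content of the lemma establishing $a_{j,1}=\frac{j(j+1)!}{2}$, once one substitutes $a_{j,0}=T_j(0)=(j+1)!$ from Proposition~\ref{cpolpi2}. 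Thus the only substantive range is $2\le k\le j$.

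For that range I would first record an auxiliary identity valid for \emph{every} $k\in\Bbb Z$:
\begin{equation}\label{auxstar}
k(k+1)a_{j,k}=j(j+1)a_{j-1,k-1}.
\end{equation}
This follows by multiplying Lemma~\ref{recurrcoefs} by $(k+1)$, obtaining $k(k+1)a_{j,k}=j(k+1)k\,a_{j-1,k}+j(k+1)a_{j-1,k-1}$, and then replacing $k(k+1)a_{j-1,k}$ by $(j-k)a_{j-1,k-1}$ via the inductive hypothesis $P(j-1)$ at index $k$; the bracket $(j-k)+(k+1)$ collapses to $j+1$. Crucially no division intervenes here, so \eqref{auxstar} holds for all $k$.

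In view of \eqref{auxstar}, the target $P(j)$ at index $k$ is equivalent to $(j+1-k)a_{j,k-1}=j(j+1)a_{j-1,k-1}$. To prove this I would apply Lemma~\ref{recurrcoefs} at index $k-1$, giving $(k-1)a_{j,k-1}=j\bigl((k-1)a_{j-1,k-1}+a_{j-1,k-2}\bigr)$, and then eliminate $a_{j-1,k-2}$ using $P(j-1)$ at index $k-1$, which reads $(k-1)k\,a_{j-1,k-1}=(j-k+1)a_{j-1,k-2}$. Substituting and simplifying the factor $1+\frac{k}{j-k+1}=\frac{j+1}{j-k+1}$ leaves $(k-1)a_{j,k-1}=j(k-1)\frac{j+1}{j-k+1}a_{j-1,k-1}$; cancelling $k-1$ and multiplying by $j+1-k=j-k+1$ yields precisely $(j+1-k)a_{j,k-1}=j(j+1)a_{j-1,k-1}$, which combined with \eqref{auxstar} closes the induction.

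The places where I expect the argument to require the most care are exactly the degeneracies at the endpoints. The cancellation of $k-1$ forbids $k=1$, which is the whole reason $a_{j,1}$ had to be pinned down by its own lemma; and the division by $j-k+1$ requires $k\le j$, the complementary cases $k=j+1$ and $k>j+1$ being absorbed by the vanishing observations above. As a consistency check one may note that $P(j)$ is equivalent to the differential equation $XT_j''(X)+(X+2)T_j'(X)-jT_j(X)=0$: the left-hand side of $P(j)$ is the coefficient array of $(XT_j)''$ and the right-hand side that of $jT_j-XT_j'$. Verifying this generalized-Laguerre-type equation directly from Proposition~\ref{cpolpi2} is an alternative route, but it does not seem to be shorter than the coefficient induction just outlined.
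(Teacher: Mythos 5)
Your proof is correct, and at its core it is the same argument as the paper's: an induction on $j$ driven by the recurrence $k a_{j,k}=j(k a_{j-1,k}+a_{j-1,k-1})$ of Lemma~\ref{recurrcoefs}, with the inductive hypothesis invoked exactly twice (at indices $k$ and $k-1$), the case $k=1$ anchored on the lemma $a_{j,1}=\frac{j(j+1)!}{2}$, and the degenerate ranges disposed of by vanishing of coefficients. The difference is in the packaging of the induction step, and it is a genuine improvement in clarity. The paper multiplies the target by $k-1$ and chases a single chain of equivalences, regrouping terms until both hypotheses can be applied; because it never divides by $j+1-k$, its computation covers all $k>1$ at once. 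You instead isolate the division-free transfer identity $k(k+1)a_{j,k}=j(j+1)a_{j-1,k-1}$, valid for every $k\in\Bbb Z$, which splits the step into two short halves; the price is that your second half divides by $j-k+1$ and cancels $k-1$, forcing the explicit extra endpoint cases $k=j+1$ and $k>j+1$, which you correctly absorb by the vanishing conventions. What your route buys is worth noting: the transfer identity, rewritten as $a_{j,k}=\frac{j(j+1)}{k(k+1)}a_{j-1,k-1}$ and iterated down to $a_{j-k,0}=(j-k+1)!$, yields the closed form $a_{j,k}=\binom{j}{k}\binom{j+1}{k+1}(j-k)!$ of the subsequent theorem immediately, making its separate induction on $k$ unnecessary; and your closing observation that the lemma is equivalent to the equation $XT_j''+(X+2)T_j'-jT_j=0$ correctly anticipates the corollary the paper derives from it.
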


\begin{proof} If $k\leq 0$ the equality holds obviously. We will prove the result by induction on $j$. For $j=0$ it is trivially true.
Let us suppose that it is true for $j-1$. For $k=1$ the result follows from the previous lemma. For $k>1$, by multiplying by $k-1$ we obtain that what we want to prove is equivalent to $$(k^2-1)k a_{j,k}=(j+1-k)(k-1) a_{j,k-1},$$ and by Lemma \ref{recurrcoefs}, this is the same as  \begin{equation}\label{bsj}(k^2-1)j(k a_{j-1,k}+a_{j-1,k-1})=(j+1-k)j((k-1)a_{j-1,k-1}+a_{j-1,k-2}).\end{equation} After symplifying $j$ and regrouping terms we can see that (\ref{bsj}) is equivalent to \begin{equation}\label{recurrcoefs2}(k^2-1)k a_{j-1,k}+(2k-j)(k-1)a_{j-1,k-1}=(j+1-k)a_{j-1,k-2}.\end{equation} By the induction hypothesis, $$k(k+1)a_{j-1,k}=(j-k)a_{j-1,k-1},$$ and hence $$(k^2-1)k a_{j-1,k}=(k-1)(j-k)a_{j-1,k-1},$$ and therefore (\ref{recurrcoefs2}) is equivalent to $$(k-1)k a_{j-1,k-1}=(j+1-k)a_{j-1,k-2},$$ and this is true by induction hypothesis.
\end{proof}

\begin{theorem} $$a_{j,k}={j \choose k} {j+1 \choose k+1}(j-k)!\ \forall j,k\in\Bbb Z^+.$$
\end{theorem}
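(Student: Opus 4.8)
The claim is a closed-form formula for the coefficients $a_{j,k}$ of the companion polynomials $T_j(X)$ of the empty digraphs $E_{j+1}$, namely $a_{j,k}=\binom{j}{k}\binom{j+1}{k+1}(j-k)!$. I have at my disposal two recurrences already proved: Lemma~\ref{recurrcoefs}, which reads $k\,a_{j,k}=j(k\,a_{j-1,k}+a_{j-1,k-1})$, relating row $j$ to row $j-1$, and the third lemma $k(k+1)a_{j,k}=(j+1-k)a_{j,k-1}$, which stays within a single row $j$ and relates adjacent coefficients. I also have the boundary data $a_{j,0}=T_j(0)=(j+1)!$ from Proposition~\ref{cpolpi2}. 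The plan is to verify that the proposed closed form satisfies one of these recurrences together with the correct initial condition, and then invoke uniqueness.

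**The approach.** The cleanest route is to use the within-row recurrence $k(k+1)a_{j,k}=(j+1-k)a_{j,k-1}$, since it lets me build up all coefficients in a fixed row $j$ starting from $a_{j,0}=(j+1)!$ by induction on $k$. First I would confirm the base case $k=0$: the candidate formula gives $a_{j,0}=\binom{j}{0}\binom{j+1}{1}(j-0)!=(j+1)\cdot j!=(j+1)!$, matching Proposition~\ref{cpolpi2}. Then, assuming the formula holds for $a_{j,k-1}$, I would substitute both the candidate for $a_{j,k-1}$ and the candidate for $a_{j,k}$ into the within-row recurrence and check that the identity reduces to a true statement about binomial coefficients and factorials. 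Concretely, I must verify
\[
k(k+1)\binom{j}{k}\binom{j+1}{k+1}(j-k)! \;=\; (j+1-k)\binom{j}{k-1}\binom{j+1}{k}(j-k+1)!.
\]

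**Reducing the identity.** The verification is a routine but careful manipulation of binomial coefficients. Using $\binom{j}{k}=\frac{j-k+1}{k}\binom{j}{k-1}$ and $\binom{j+1}{k+1}=\frac{j+1-k}{k+1}\binom{j+1}{k}$, the left-hand side becomes
\[
k(k+1)\cdot\frac{j-k+1}{k}\binom{j}{k-1}\cdot\frac{j+1-k}{k+1}\binom{j+1}{k}(j-k)!,
\]
and after the factors $k$ and $k+1$ cancel this is $(j-k+1)(j+1-k)\binom{j}{k-1}\binom{j+1}{k}(j-k)!$. Since $(j-k+1)(j-k)!=(j-k+1)!$, this equals exactly the right-hand side, so the induction step closes. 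Because each row $j$ is thereby determined recursively in $k$ from the correct value $a_{j,0}$, and the candidate formula reproduces both the base case and the step, the two agree for all $j,k\in\mathbb{Z}^+$.

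**Where the difficulty lies.** There is no conceptual obstacle here; the result is a consequence of the earlier lemmas plus one binomial identity. The only thing requiring care is bookkeeping at the boundary of the range $0\le k\le j$: I should check that the convention $a_{j,k}=0$ for $k<0$ and $k>j$ is consistent with the closed form (the factor $(j-k)!$ is undefined for $k>j$, but $\binom{j}{k}=0$ there, so one interprets the product as $0$, and for $k<0$ the binomial $\binom{j}{k}$ vanishes as well). As a sanity cross-check, I would also confirm the candidate is consistent with the cross-row recurrence of Lemma~\ref{recurrcoefs}; this is optional but reassuring. The main care point, then, is simply handling the edge indices correctly rather than any genuine mathematical obstruction.
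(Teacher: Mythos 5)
Your proof is correct and follows essentially the same route as the paper's: induction on $k$ using the within-row recurrence $k(k+1)a_{j,k}=(j+1-k)a_{j,k-1}$ from the preceding lemma, anchored at $a_{j,0}=(j+1)!$ from Proposition~\ref{cpolpi2}. The paper states this in one line; you have simply supplied the binomial-coefficient verification and the boundary-index bookkeeping that the paper leaves to the reader.
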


\begin{proof} It is easily proved by induction on $k$ using the previous lemma and the fact that, by Proposition \ref{cpolpi2}, $$a_{j,0}=(j+1)!$$
\end{proof}

In next corollary $L^{(1)}_j (X)$ is the $j$-th generalized Laguerre polynomial corresponding to $\alpha=1$ in the family of generalized Laguerre polynomials $L^{(\alpha)}_j(X)$.

\begin{corollary} $$T_j(-X)=j!L^{(1)}_j (X)\ \forall j\in\Bbb Z^+.$$
\end{corollary}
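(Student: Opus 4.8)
The goal is to prove that $T_j(-X) = j! L_j^{(1)}(X)$, where $a_{j,k} = \binom{j}{k}\binom{j+1}{k+1}(j-k)!$ gives the coefficients of $T_j(X)$. My plan is to compare the explicit coefficient formula just established against the known closed form for the generalized Laguerre polynomials. Recall that the generalized Laguerre polynomial admits the explicit expansion
\begin{equation}
L_j^{(\alpha)}(X) = \sum_{k=0}^{j} (-1)^k \binom{j+\alpha}{j-k}\frac{X^k}{k!}.
\end{equation}
Specializing to $\alpha = 1$, I would write $\binom{j+1}{j-k} = \binom{j+1}{k+1}$, so that the coefficient of $X^k$ in $L_j^{(1)}(X)$ is $(-1)^k \binom{j+1}{k+1}/k!$.

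First I would compute the coefficient of $X^k$ in $T_j(-X)$. From the convention \eqref{coeftj} we have $T_j(X) = \sum_k a_{j,k} X^k$, hence the coefficient of $X^k$ in $T_j(-X)$ is $(-1)^k a_{j,k}$. Using the theorem just proved, this equals
\begin{equation}
(-1)^k \binom{j}{k}\binom{j+1}{k+1}(j-k)!.
\end{equation}
Next I would compute the coefficient of $X^k$ in $j! L_j^{(1)}(X)$, which by the explicit formula above is $(-1)^k j!\,\binom{j+1}{k+1}/k!$. The remaining task is then the purely algebraic verification that these two coefficients agree for every $k$.

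The core identity to check is therefore
\begin{equation}
\binom{j}{k}(j-k)! = \frac{j!}{k!}.
\end{equation}
This is immediate, since $\binom{j}{k}(j-k)! = \frac{j!}{k!\,(j-k)!}(j-k)! = \frac{j!}{k!}$. Multiplying both sides by $(-1)^k\binom{j+1}{k+1}$ recovers exactly the matching of coefficients, completing the identification $T_j(-X) = j! L_j^{(1)}(X)$.

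I do not anticipate a genuine obstacle here: the heavy lifting was already done in establishing the closed form $a_{j,k} = \binom{j}{k}\binom{j+1}{k+1}(j-k)!$ through the preceding chain of lemmas. The only point requiring mild care is selecting the correct normalization and the right form of the explicit Laguerre expansion (in particular, writing $\binom{j+1}{j-k} = \binom{j+1}{k+1}$ and tracking the sign $(-1)^k$ introduced by substituting $-X$), so that the factorial bookkeeping lines up cleanly. Once the explicit coefficient formula for $L_j^{(1)}$ is quoted, the proof reduces to the one-line binomial simplification above.
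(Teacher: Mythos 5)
Your proof is correct and follows essentially the same route as the paper: both substitute $-X$ into the coefficient formula $a_{j,k}=\binom{j}{k}\binom{j+1}{k+1}(j-k)!$, simplify $\binom{j}{k}(j-k)!=\frac{j!}{k!}$, and match the result against the explicit expansion $L_j^{(1)}(X)=\sum_{k=0}^{j}\binom{j+1}{k+1}\frac{(-1)^k}{k!}X^k$. The only difference is presentational: you spell out the specialization $\binom{j+1}{j-k}=\binom{j+1}{k+1}$ from the general $L_j^{(\alpha)}$ formula, which the paper leaves implicit.
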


\begin{proof}$$T_j(-X)=\sum_{k=0}^j\frac{j!}{k!(j-k)!}{j+1 \choose k+1}(j-k)!(-1)^k X^k=j!\sum_{k=0}^j {j+1 \choose k+1}\frac{(-1)^k}{k!} X^k=j!L^{(1)}_j (X).$$
\end{proof}

\begin{corollary} The family of polynomials $T_j(X)$ satisfy the differential equation $$X\frac{d^2 Y}{dX^2}+(2+X)\frac{d Y}{dX}-jY=0.$$
\end{corollary}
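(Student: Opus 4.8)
The plan is to verify the differential equation directly at the level of power-series coefficients, using only the recurrence established in the third lemma above (namely $k(k+1)a_{j,k}=(j+1-k)a_{j,k-1}$), so that no external property of generalized Laguerre polynomials is needed. Writing $Y=T_j(X)=\sum_k a_{j,k}X^k$, I would first expand each term of the operator $X\frac{d^2Y}{dX^2}+(2+X)\frac{dY}{dX}-jY$ as a power series in $X$ and reindex every sum so that it is displayed in the common form $\sum_k(\cdots)X^k$.

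Carrying out the differentiations and reindexing, the coefficient of $X^k$ in $X\frac{d^2Y}{dX^2}$ is $(k+1)k\,a_{j,k+1}$, the coefficient of $X^k$ in $2\frac{dY}{dX}$ is $2(k+1)a_{j,k+1}$, the coefficient of $X^k$ in $X\frac{dY}{dX}$ is $k\,a_{j,k}$, and the coefficient of $X^k$ in $-jY$ is $-j\,a_{j,k}$. Adding these, the coefficient of $X^k$ in the whole expression collapses to $(k+1)(k+2)a_{j,k+1}+(k-j)a_{j,k}$. Thus the claimed equation holds if and only if $(k+1)(k+2)a_{j,k+1}=(j-k)a_{j,k}$ for every $k$, and this is precisely the identity of the third lemma with $k$ replaced by $k+1$. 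Since that lemma was proved for all $j\in\Bbb Z^+$ and all $k\in\Bbb Z$, the equality holds coefficientwise, which finishes the argument.

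An alternative, more conceptual route would bypass the coefficient computation entirely and invoke the preceding corollary, $T_j(-X)=j!\,L^{(1)}_j(X)$, together with the known fact that the generalized Laguerre polynomial $L^{(\alpha)}_j$ with $\alpha=1$ satisfies $x\,y''+(2-x)\,y'+j\,y=0$. Setting $Y(X)=T_j(X)=j!\,L^{(1)}_j(-X)$ and substituting $u=-X$ converts this equation, after tracking the sign changes produced by the chain rule in the first and second derivatives, into $X\frac{d^2Y}{dX^2}+(2+X)\frac{dY}{dX}-jY=0$. The only delicate point in either approach is bookkeeping: the reindexing of the series in the first route, and the signs coming from the substitution $u=-X$ in the second. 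Neither is a genuine obstacle, so I expect the direct coefficient argument via the third lemma to be the cleanest and most self-contained to write down.
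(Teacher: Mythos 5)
Your primary argument is correct, and it takes a genuinely different route from the paper. The paper's proof simply quotes the classical fact that the generalized Laguerre polynomials $L^{(1)}_j(Z)$ satisfy $Z Y''+(2-Z)Y'+jY=0$, notes that scalar multiples satisfy it too, and performs the change of variable $X=-Z$ --- i.e.\ exactly your ``alternative, more conceptual route,'' resting on the preceding corollary $T_j(-X)=j!\,L^{(1)}_j(X)$. Your main argument instead verifies the equation coefficientwise: your bookkeeping checks out, since the coefficient of $X^k$ in $XY''+(2+X)Y'-jY$ is indeed $(k+1)k\,a_{j,k+1}+2(k+1)a_{j,k+1}+(k-j)a_{j,k}=(k+1)(k+2)a_{j,k+1}+(k-j)a_{j,k}$, and its vanishing is precisely the paper's lemma $k(k+1)a_{j,k}=(j+1-k)a_{j,k-1}$ shifted by one index, which the paper proved for all $j\in\Bbb Z^+$ and all $k\in\Bbb Z$ (the convention $a_{j,k}=0$ outside $0\le k\le j$ handles the boundary cases $k\ge j$ automatically). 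What your route buys is self-containedness and a shorter dependency chain: you need only the two-term recurrence from the lemma, bypassing both the explicit formula $a_{j,k}=\binom{j}{k}\binom{j+1}{k+1}(j-k)!$ and the identification with $L^{(1)}_j$, whereas the paper's route is shorter on the page but imports the generalized Laguerre ODE as an external fact and situates the $T_j$ within that classical family, which is useful context for the rest of the paper. One could even remark that your computation reproves, rather than uses, the $\alpha=1$ Laguerre equation in the variable $-X$.
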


\begin{proof} Generalized Laguerre polynomials $L^{(1)}_j (Z)$ satisfy the differential equation $$Z\frac{d^2 Y}{d Z^2}+(2-Z)\frac{d Y}{d Z}+jY=0,$$ and obviously polynomials proportional to them also satisfy it. Now we make the change of variable $X=-Z$.
\end{proof}

We will give, in a similar way as we did with directed paths, the tabloidal form of $L^{(1)}_j(X)$:

\begin{theorem}$$L^{(1)}_j(X)=(\sum_{i=0}^{\infty}{i+j+1 \choose j}\frac{1}{i!}(-X)^i)\exp(X)\forall j\in\Bbb Z^+.$$
\end{theorem}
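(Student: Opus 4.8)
The plan is to mirror the proof of Theorem~\ref{tfp}, replacing the directed-path head by the empty digraph $E_{j+1}$ and invoking the companion polynomials $T_j(X)=T(E_{j+1},v_1)(X)$ already studied in this section. First I would, following the notation preceding Theorem~\ref{rcp}, identify $(E_{j+1})_i=E_{j+1}+P_{i+1}$: this is the digraph on the $i+j+1$ vertices $\{v_1,\dots,v_{j+1},z_1,\dots,z_i\}$ whose only arcs form the directed path $v_1=z_0\to z_1\to\cdots\to z_i$, so that $v_2,\dots,v_{j+1}$ remain isolated. Its underlying undirected graph therefore has exactly $j+1$ connected components: one copy of $P_{i+1}$ of order $i+1$, together with $j$ singletons.

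Next I would compute the counter of this digraph. By Theorem~\ref{contcc}, together with $\sigma(P_{i+1})=1$ (Proposition~\ref{contpath}) and the fact that a singleton has counter $1$, we obtain $\sigma((E_{j+1})_i)=\binom{i+j+1}{i+1,1,\dots,1}=\frac{(i+j+1)!}{(i+1)!}=j!\binom{i+j+1}{j}$; as a sanity check this gives $\sigma((E_{j+1})_0)=(j+1)!$, matching the value $T_j(0)=(j+1)!$ from Proposition~\ref{cpolpi2}. Feeding this counter into the definition of the generatrix function and applying Theorem~\ref{rcp} then yields the identity $j!\sum_{i=0}^{\infty}\binom{i+j+1}{j}\frac{X^i}{i!}=T_j(X)\exp(X)$.

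Finally I would translate this into a statement about $L^{(1)}_j$. Substituting $X\mapsto-X$ in the last identity and using the earlier corollary $T_j(-X)=j!\,L^{(1)}_j(X)$ gives $j!\sum_{i=0}^{\infty}\binom{i+j+1}{j}\frac{(-X)^i}{i!}=j!\,L^{(1)}_j(X)\exp(-X)$; cancelling $j!$ and multiplying through by $\exp(X)$ produces exactly the claimed tabloidal form. I do not expect a genuine obstacle here, since the argument is the precise analogue of Theorem~\ref{tfp}; the only points requiring care are the correct bookkeeping of the connected components of $(E_{j+1})_i$ (so that the multinomial collapses to $j!\binom{i+j+1}{j}$ rather than to a shifted binomial) and the sign substitution $X\mapsto-X$ that converts the generatrix identity into the Laguerre identity.
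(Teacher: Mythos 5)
Your proposal is correct and is essentially the paper's own proof: the paper likewise proves the identity by following the line of the proof of Theorem~\ref{tfp}, computing $\sigma(E_{j+1}+P_{i+1})=\binom{i+j+1}{i+1,1,\dots,1}=\frac{(i+j+1)!}{(i+1)!}$ via Theorem~\ref{contcc} and then invoking Theorem~\ref{rcp} together with the corollary $T_j(-X)=j!\,L^{(1)}_j(X)$. Your write-up merely makes explicit the bookkeeping (the $j$ singleton components, the rewriting $\frac{(i+j+1)!}{(i+1)!}=j!\binom{i+j+1}{j}$, and the substitution $X\mapsto -X$) that the paper leaves implicit.
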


\begin{proof} It follows the line of the proof of Theorem \ref{tfp}, having into account that in this case, by Theorem \ref{contcc}, $$\sigma(E_{j+1}+P_{i+1})={i+j+1 \choose i+1,1,\dots,1}=\frac{(i+j+1)!}{(i+1)!}.$$
\end{proof}

At this point we need a systematic way to associate a differential equation to a digraph with a distinguished vertex. We will do this through the use of the companion polynomial. First we will establish a way to obtain a Laguerre-like differential equation from a polynomial.

\begin{theorem}\label{lcxidilag} If $P\in\Bbb Q[X]$ and $\deg(P)=n$, then $P$ can be expressed in a unique way as a linear combination of $$L_n(X),X\frac{d}{dX}L_n(X),\dots,X^n\frac{d^n}{dX^n}L_n(X).$$
\end{theorem}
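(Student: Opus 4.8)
The plan is to prove the statement by exhibiting the set $\{L_n(X), X\frac{d}{dX}L_n(X), \dots, X^n\frac{d^n}{dX^n}L_n(X)\}$ as a basis of the $(n+1)$-dimensional vector space $\Bbb Q[X]_{\leq n}$ of polynomials of degree at most $n$. Since there are exactly $n+1$ such polynomials and the target space has dimension $n+1$, uniqueness of the linear combination is equivalent to linear independence, which in turn is equivalent to the claim that these polynomials span $\Bbb Q[X]_{\leq n}$. First I would verify that each candidate polynomial $X^k\frac{d^k}{dX^k}L_n(X)$ indeed has degree at most $n$: the Laguerre polynomial $L_n(X)$ has degree $n$, so its $k$-th derivative has degree $n-k$, and multiplying by $X^k$ raises the degree back to $n$. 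Thus all $n+1$ polynomials lie in $\Bbb Q[X]_{\leq n}$, and the whole question reduces to linear independence.

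To establish linear independence, I would examine the leading coefficients. Write $L_n(X) = c_n X^n + (\text{lower order})$ with $c_n = (-1)^n/n! \neq 0$. Then the $k$-th derivative contributes a leading term $c_n \cdot n(n-1)\cdots(n-k+1)\, X^{n-k}$, and multiplying by $X^k$ gives $X^k\frac{d^k}{dX^k}L_n(X) = c_n\, \frac{n!}{(n-k)!}\, X^n + (\text{lower order})$. The cleaner route, however, is to filter by degree from the bottom: the polynomial $X^n\frac{d^n}{dX^n}L_n(X) = c_n\, n!\, X^n$ is a nonzero multiple of $X^n$, so it alone spans the top degree. I would then argue by descending induction on the degree that the family is triangular with respect to the monomial basis $\{1, X, \dots, X^n\}$ — or, more robustly, set up the $(n+1)\times(n+1)$ coefficient matrix whose $(j,k)$ entry is the coefficient of $X^j$ in $X^k\frac{d^k}{dX^k}L_n(X)$, and show this matrix is invertible.

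The key computational fact I would isolate is the coefficient of $X^j$ in $X^k\frac{d^k}{dX^k}L_n(X)$. Writing $L_n(X)=\sum_{m=0}^n \ell_m X^m$, the polynomial $X^k\frac{d^k}{dX^k}L_n(X)$ equals $\sum_{m\geq k} \ell_m\, \frac{m!}{(m-k)!}\, X^m$, so its coefficient of $X^j$ is $\ell_j\, \frac{j!}{(j-k)!}$ (which vanishes when $j<k$). Hence the coefficient matrix factors as the product of the diagonal matrix $\mathrm{diag}(\ell_0,\dots,\ell_n)$ with the matrix $M$ whose $(j,k)$ entry is $\frac{j!}{(j-k)!}$. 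The matrix $M$ is lower triangular in $(j,k)$ with diagonal entries $\frac{j!}{0!}=j! \neq 0$, hence invertible; and the diagonal factor is invertible because every coefficient $\ell_j$ of the Laguerre polynomial $L_n$ is nonzero (indeed $\ell_j = \binom{n}{j}\frac{(-1)^j}{j!}$). The product of two invertible matrices is invertible, which proves both existence and uniqueness of the representation.

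The main obstacle I anticipate is the bookkeeping in showing the coefficient matrix is invertible cleanly, rather than any conceptual difficulty. Specifically, one must confirm that no coefficient $\ell_j$ of $L_n$ vanishes (otherwise the diagonal factor degenerates and the triangular structure could collapse); this is where the explicit formula $\ell_j = (-1)^j\binom{n}{j}/j!$ for the Laguerre coefficients is essential, and I would cite it from the Sheffer-sequence characterization already invoked in Theorem~\ref{jtlp}. Once the nonvanishing of all $\ell_j$ is secured, the triangular-times-diagonal factorization makes invertibility immediate, so the remainder of the argument is routine linear algebra.
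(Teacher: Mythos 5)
Your proposal is correct and takes essentially the same approach as the paper: both reduce the statement to the invertibility of the coefficient matrix of the polynomials $X^k\frac{d^k}{dX^k}L_n(X)$ in the monomial basis, which is triangular precisely because the explicit Laguerre coefficients $\ell_j=(-1)^j\binom{n}{j}/j!$ are all nonzero for $0\le j\le n$. The only cosmetic difference is that the paper evaluates the determinant outright as $(-1)^{\lfloor (n+1)/2\rfloor}\binom{n}{0}\binom{n}{1}\cdots\binom{n}{n}$, whereas you factor the matrix as a diagonal matrix times a lower-triangular one; both yield the same conclusion.
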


\begin{proof} Since $$X^i\frac{d^i}{d X^i} L_n(X)=\sum_{j=0}^{n-i} {n \choose i+j} \frac{(-1)^{i+j}}{j!} X^{i+j}\ \forall i\in\Bbb Z^+,$$ the matrix formed by the coefficients of these polynomials is triangular, and its determinant is $$(-1)^{\lfloor (n+1)/2\rfloor} {n \choose 0}{n \choose 1}\cdot\dots\cdot {n \choose n},$$ where $\lfloor\rfloor$ denotes the integer part, and therefore it is non-zero.
\end{proof}

Observe that if $n,i\in\Bbb Z^+$, we can put \begin{equation}\label{dxilnx}\frac{d^i}{dX^i}L_n(X)=A_{n,i}(X)L_n(X)+B_{n,i}(X)\frac{d}{dX}L_n(X),\end{equation} where $A_{n,i}(X),B_{n,i}(X)$ are rational fractions (with denominator $X^{n-1}$ when $n\geq 1$). More specifically, we have:

\begin{proposition}\label{stairab} The coefficients $A_{n,i}(X),B_{n,i}(X)$ satisfy the following recurrence relations:
$$A_{n,0}(X)=1,B_{n,0}(X)=0$$ and, if $i\geq 1$, then $$A_{n,i}(X)=\frac{d}{dX} A_{n,i-1}(X)-\frac{n}{X}B_{n,i-1}(X),$$ $$B_{n,i}(X)=A_{n,i-1}(X)+\frac{d}{dX} B_{n,i-1}(X)+\frac{X-1}{X}B_{n,i-1}(X).$$
\end{proposition}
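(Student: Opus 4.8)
The plan is to prove both recurrences simultaneously by induction on the differentiation order $i$, starting from the defining identity $(\ref{dxilnx})$. The base case $i=0$ is immediate: the identity $\frac{d^0}{dX^0}L_n(X)=L_n(X)$ forces $A_{n,0}(X)=1$ and $B_{n,0}(X)=0$, which is exactly what is claimed. For the inductive step I would assume $(\ref{dxilnx})$ holds for some $i-1\geq 0$, namely
\begin{equation}\label{planih}\frac{d^{i-1}}{dX^{i-1}}L_n(X)=A_{n,i-1}(X)L_n(X)+B_{n,i-1}(X)\frac{d}{dX}L_n(X),\end{equation}
and differentiate both sides once with respect to $X$ to obtain an expression for $\frac{d^i}{dX^i}L_n(X)$.

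Differentiating the right-hand side of $(\ref{planih})$ by the product rule produces
$$\frac{d^i}{dX^i}L_n(X)=\left(\frac{d}{dX}A_{n,i-1}(X)\right)L_n(X)+A_{n,i-1}(X)\frac{d}{dX}L_n(X)+\left(\frac{d}{dX}B_{n,i-1}(X)\right)\frac{d}{dX}L_n(X)+B_{n,i-1}(X)\frac{d^2}{dX^2}L_n(X).$$
The crucial input here is that $L_n$ satisfies Laguerre's equation $X\frac{d^2}{dX^2}L_n(X)+(1-X)\frac{d}{dX}L_n(X)+nL_n(X)=0$, which lets me solve for the second derivative as
$$\frac{d^2}{dX^2}L_n(X)=-\frac{n}{X}L_n(X)-\frac{1-X}{X}\frac{d}{dX}L_n(X)=-\frac{n}{X}L_n(X)+\frac{X-1}{X}\frac{d}{dX}L_n(X).$$
Substituting this back eliminates the second derivative and leaves an expression purely in terms of $L_n(X)$ and $\frac{d}{dX}L_n(X)$.

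Collecting the coefficient of $L_n(X)$ then gives $\frac{d}{dX}A_{n,i-1}(X)-\frac{n}{X}B_{n,i-1}(X)$, which I identify as $A_{n,i}(X)$, and collecting the coefficient of $\frac{d}{dX}L_n(X)$ gives $A_{n,i-1}(X)+\frac{d}{dX}B_{n,i-1}(X)+\frac{X-1}{X}B_{n,i-1}(X)$, which I identify as $B_{n,i}(X)$. Since by Theorem \ref{lcxidilag} the representation of any polynomial combination in terms of $L_n(X)$ and its first derivative is unique (the two functions are linearly independent over the rational fractions, as one is a nonzero polynomial and the other has strictly smaller degree), these identifications are forced and the recurrences follow. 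I would note that the $\frac{X-1}{X}$ appearing in the $B_{n,i}$ recurrence is precisely the coefficient coming from Laguerre's equation, so the structure of the recurrence mirrors the structure of that equation.

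The only mild subtlety, rather than a genuine obstacle, is bookkeeping with the rational fractions: the factor $\frac{n}{X}$ introduces a denominator, so I should confirm that $A_{n,i}(X)$ and $B_{n,i}(X)$ remain rational fractions with denominator a power of $X$ (as already asserted in the sentence preceding the proposition), which is clear since the recurrences only ever add, differentiate, and multiply by $\frac{1}{X}$-type factors. Provided the uniqueness from Theorem \ref{lcxidilag} is invoked to legitimize reading off coefficients, the argument is a direct computation with no hidden difficulty.
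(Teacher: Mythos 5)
Your core argument coincides with the paper's: the paper's entire proof of this proposition is the single observation (\ref{sdlp}), namely Laguerre's equation solved for $\frac{d^2}{dX^2}L_n(X)$, and your induction --- differentiate the order-$(i-1)$ instance of (\ref{dxilnx}), eliminate the second derivative via (\ref{sdlp}), and collect the coefficients of $L_n(X)$ and $\frac{d}{dX}L_n(X)$ --- is exactly the computation the paper leaves implicit. The recurrences you extract are the stated ones, so the proof is correct in substance and takes the same route.

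One side remark in your write-up is false, though it does not damage the argument: $L_n(X)$ and $\frac{d}{dX}L_n(X)$ are \emph{not} linearly independent over the field of rational fractions. Over that field any two nonzero elements are linearly dependent, and concretely $\bigl(-\frac{d}{dX}L_n(X)\bigr)\cdot L_n(X)+L_n(X)\cdot\frac{d}{dX}L_n(X)=0$ is a nontrivial relation whose coefficients are even polynomials, so the representation $P=A\,L_n+B\,\frac{d}{dX}L_n$ with rational-fraction coefficients is not unique; nor does Theorem \ref{lcxidilag} supply such uniqueness, since it concerns uniqueness of \emph{constant} coefficients in the basis $X^i\frac{d^i}{dX^i}L_n(X)$, a different statement. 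Fortunately no uniqueness is needed: the proposition is naturally read as asserting that the coefficients \emph{defined} by the stated recurrences satisfy (\ref{dxilnx}), i.e., an existence claim, and your differentiation-and-substitution computation establishes precisely that by exhibiting one valid pair $(A_{n,i},B_{n,i})$ at each step. Delete the ``identifications are forced'' justification and the argument is exactly the paper's; your closing observation that the recurrences preserve the form of rational fractions with denominator a power of $X$ is correct and consistent with the remark preceding the proposition.
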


\begin{proof} Use the fact that \begin{equation}\label{sdlp} \frac{d^2}{dX^2}L_n(X)=\frac{-n}{X}L_n(X)+\frac{X-1}{X}\frac{d}{dX}L_n(X).\end{equation}
\end{proof}

\begin{theorem} If $P[X]\in\Bbb Q[X]$ and $\deg(P[X])=n$, then $P$ can be expressed as $$P(X)=Q(X)L_n(X)+R(X)\frac{d}{dX}L_n(X),$$ where $Q(X),R(X)$ are polynomials in $\Bbb Q[X]$ of degrees at most $n-1$ and $n$, respectively.
\end{theorem}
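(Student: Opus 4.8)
The plan is to combine the basis expansion of Theorem~\ref{lcxidilag} with the reduction formulas~(\ref{dxilnx}), and then show that the resulting coefficients are genuine polynomials of the stated degrees. First I would apply Theorem~\ref{lcxidilag}: since $\deg P=n$, there are unique rationals $c_0,\dots,c_n\in\Bbb Q$ with
$$P(X)=\sum_{i=0}^n c_i\,X^i\frac{d^i}{dX^i}L_n(X).$$
Substituting~(\ref{dxilnx}), namely $\frac{d^i}{dX^i}L_n=A_{n,i}L_n+B_{n,i}\frac{d}{dX}L_n$, and regrouping the two pieces gives
$$P(X)=Q(X)L_n(X)+R(X)\frac{d}{dX}L_n(X),$$
where $Q:=\sum_{i=0}^n c_i\,X^iA_{n,i}$ and $R:=\sum_{i=0}^n c_i\,X^iB_{n,i}$. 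Everything then reduces to proving that each $X^iA_{n,i}$ and each $X^iB_{n,i}$ is a polynomial of the required degree.

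The main obstacle is precisely this integrality: by Proposition~\ref{stairab} the $A_{n,i},B_{n,i}$ are only rational fractions with powers of $X$ in the denominator, so a priori $X^iA_{n,i}$ could retain a pole at $X=0$, and one must verify that these poles cancel. I would handle this by setting $a_i:=X^iA_{n,i}$ and $b_i:=X^iB_{n,i}$ and translating Proposition~\ref{stairab} into recurrences for $a_i,b_i$. Multiplying each identity of Proposition~\ref{stairab} by $X^i$ and using the quotient rule in the form $X^i\frac{d}{dX}A_{n,i-1}=Xa_{i-1}'-(i-1)a_{i-1}$ (and similarly for $B$), one checks that every $1/X$ is absorbed, yielding
$$a_i=Xa_{i-1}'-(i-1)a_{i-1}-n\,b_{i-1},\qquad b_i=Xa_{i-1}+Xb_{i-1}'+(X-i)\,b_{i-1},$$
with base case $a_0=1$, $b_0=0$. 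Since the right-hand sides involve only differentiation, multiplication by $X$, and $\Bbb Q$-linear combinations, an induction on $i$ shows immediately that every $a_i$ and $b_i$ is a polynomial; this is the heart of the argument.

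It then remains to carry out the degree bookkeeping, again by induction on $i$ using the same two recurrences. Assuming $\deg a_{i-1}\le i-2$ and $\deg b_{i-1}\le i-1$, the dominant contribution to $a_i$ comes from $-n\,b_{i-1}$, giving $\deg a_i\le i-1$, while the dominant contribution to $b_i$ comes from $Xb_{i-1}$, giving $\deg b_i\le i$; the small cases $a_0,b_0,a_1,b_1$ are checked by hand. Consequently $\deg(X^iA_{n,i})\le n-1$ (using $\deg a_0=0\le n-1$ for $n\ge1$ and $\deg a_i\le i-1$ otherwise) and $\deg(X^iB_{n,i})\le i\le n$ for all $0\le i\le n$. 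Summing over $i$ gives $\deg Q\le n-1$ and $\deg R\le n$, as required; the degenerate case $n=0$, where $P=Q$ is constant and $R=0$, is dealt with separately.
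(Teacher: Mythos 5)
Your proposal is correct and takes essentially the same route as the paper, which likewise obtains the decomposition by combining Theorem~\ref{lcxidilag} with the reduction~(\ref{dxilnx}) and treats the polynomiality and degree bounds as immediate. Your recurrences for $a_i=X^iA_{n,i}$ and $b_i=X^iB_{n,i}$ (which check out, e.g.\ $a_2=-nX$, $b_2=X^2-X$) simply make explicit the pole-cancellation and degree bookkeeping that the paper leaves to the reader.
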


\begin{proof} It is an immediate consequence of Theorem \ref{lcxidilag} and the comment before the previous proposition.
\end{proof}

In the next theorem we will use prime notation for first and second derivatives.

\begin{theorem}\label{lde} If $Y=P(X)$ is a polynomial of degree $n$ in $\Bbb Q[X]$ and $Q(X),R(X)$ are as in the previous theorem, then $X Y'' (R(X) (n R(X)-X Q'(X))+Q(X) (X R'(X)+(X-1) R(X))+X Q(X)^2)+Y' (R(X) (X (X Q''(X)-2 n R'(X))-n (X-2) R(X))-Q(X)(X (2 X Q'(X)+X R''(X)+2 (X-1) R'(X))+(X^2-2 X+2) R(X))+(1-X) X Q(X)^2)+Y (Q(X) (X (3 n R'(X)-X Q''(X)+(X-1) Q'(X))+n(X-2) R(X))+R(X) (X ((1-X) Q''(X)-n R''(X))+(-(3 n+2) X+X^2+2) Q'(X)+n (X-2) R'(X))+X (R'(X) (2 n R'(X)-X Q''(X))+Q'(X)(X R''(X)+2 (X-1) R'(X))+2 X Q'(X)^2)+n X Q(X)^2+(n-1) n R(X)^2)$
\end{theorem}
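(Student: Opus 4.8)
The plan is to use the representation $P(X)=Q(X)L_n(X)+R(X)\frac{d}{dX}L_n(X)$ furnished by the previous theorem, together with the second-order reduction (\ref{sdlp}), to show that $Y=P(X)$, $Y'$ and $Y''$ all lie in the two-dimensional $\Bbb Q(X)$-vector space $W$ spanned by $L_n(X)$ and $\frac{d}{dX}L_n(X)$. Any three vectors of $W$ are linearly dependent over $\Bbb Q(X)$, and the coefficients of that dependence, after clearing denominators, are precisely the polynomials multiplying $Y''$, $Y'$ and $Y$ in the statement.

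First I would record the effect of differentiation on $W$. If $F=aL_n+b\frac{d}{dX}L_n$ with $a,b\in\Bbb Q(X)$, then differentiating and replacing $\frac{d^2}{dX^2}L_n$ by the right-hand side of (\ref{sdlp}) gives
\[
F'=\Bigl(a'-\frac{nb}{X}\Bigr)L_n+\Bigl(a+b'+\frac{(X-1)b}{X}\Bigr)\frac{d}{dX}L_n.
\]
Applying this map to the pair $(a,b)=(Q,R)$ produces the coefficient pair of $P'$, and applying it once more produces that of $P''$. Writing $r_0=(Q,R)$ and letting $r_1,r_2$ be the resulting pairs, so that $P=r_0\cdot w$, $P'=r_1\cdot w$, $P''=r_2\cdot w$ with $w=(L_n,\frac{d}{dX}L_n)$, the three vectors $r_0,r_1,r_2$ lie in a two-dimensional space and hence satisfy the Cramer-type identity $\det(r_1,r_2)\,r_0-\det(r_0,r_2)\,r_1+\det(r_0,r_1)\,r_2=0$, where $\det(u,v)=u_1v_2-u_2v_1$. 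Taking the dot product of this vector identity with $w$ yields the scalar relation
\[
\det(r_1,r_2)\,P-\det(r_0,r_2)\,P'+\det(r_0,r_1)\,P''=0,
\]
which holds identically.

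It then remains to multiply this relation by $X^2$ to clear the denominators introduced by the $\frac{1}{X}$ terms in (\ref{sdlp}), and to expand the three minors; one checks that $X^2\det(r_0,r_1)$ is the coefficient of $Y''$, that $-X^2\det(r_0,r_2)$ is the coefficient of $Y'$, and that $X^2\det(r_1,r_2)$ is the coefficient of $Y$. The hard part will be this last expansion. While $X^2\det(r_0,r_1)$ and $X^2\det(r_0,r_2)$ are manifestly polynomials, the pair $r_2$ carries a genuine $\frac{1}{X^2}$ denominator, so a priori $X^2\det(r_1,r_2)$ only lies in $\frac{1}{X}\Bbb Q[X]$; the substantive point is to verify that the residual $\frac{1}{X}$ term cancels, leaving the polynomial displayed in the statement. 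I would carry this out by expanding each minor in the monomials $Q^2,QR,R^2,QQ',QR',RQ',RR',QQ'',RQ'',QR'',RR''$ weighted by powers of $X$ and by $n$, and matching coefficients against the stated expression; this bookkeeping is routine but lengthy, and confirming the cancellation of the pole at $X=0$ in the $Y$-coefficient is the only genuinely delicate step.
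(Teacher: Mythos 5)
Your proposal is correct and is essentially the paper's own argument: the paper likewise differentiates $P(X)=Q(X)L_n(X)+R(X)\frac{d}{dX}L_n(X)$ twice, reduces via (\ref{sdlp}) so that $Y$, $Y'$, $Y''$ all lie in the $\Bbb Q(X)$-span of $L_n$ and $\frac{d}{dX}L_n$, and then eliminates $L_n$ and $\frac{d}{dX}L_n$, which is exactly the elimination your Cramer-type minor identity makes explicit. Your added care about the pole at $X=0$ in $X^2\det(r_1,r_2)$ is well placed (the $X^{-3}$ terms, all proportional to $nR(X)^2$, do cancel), but it is a detail within the same proof rather than a different route.
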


\begin{proof} Derivate twice in the expression for $P(X)$ obtained in the previous theorem, using (\ref{sdlp}), and eliminate $L_n(X)$ and $\frac{d}{dX}L_n(X)$ after that.
\end{proof} 

We will call the previous differential equation the laguerrean differential equation associated to $P(X)$, and we will denote it by $\frak L (P(X))$. It is of the form $$U(X)Y^{\prime\prime}+V(X)Y^{\prime}+W(X)=0,$$ with $U(X),V(X),W(X)$ polynomials in $\Bbb Q[X]$. Of course, when $P(X)=L_n(X)$, the laguerrean is just Laguerre's equation. The laguerrean just defined is very general, because $P(X)$ is an arbitrary polynomial, but the problem is that the degree of $U(X),V(X),W(X)$ is very high. Fortunately, for families of companion polynomials of digraphs evaluated in $-X$ we will find families of differential equations in which the degrees of $U(X),V(X),W(X)$ are low, and therefore are more close to Laguerre polynomials and Laguerre equation. Besides, the families of polynomials that we will find are sometimes orthogonal.

\section{Differential equation associated to a dispositional digraph with two lines}\label{disptwolines}

For $n_1,n_2\in\Bbb Z^+$ with $n_1\leq n_2$, let $\frak G_{n_1,n_2}$ be the digraph shown in the following figure:

\begin{figure}[H]
\caption{Digraph $\frak G_{n_1,n_2}$}
\includegraphics[width=5 in]{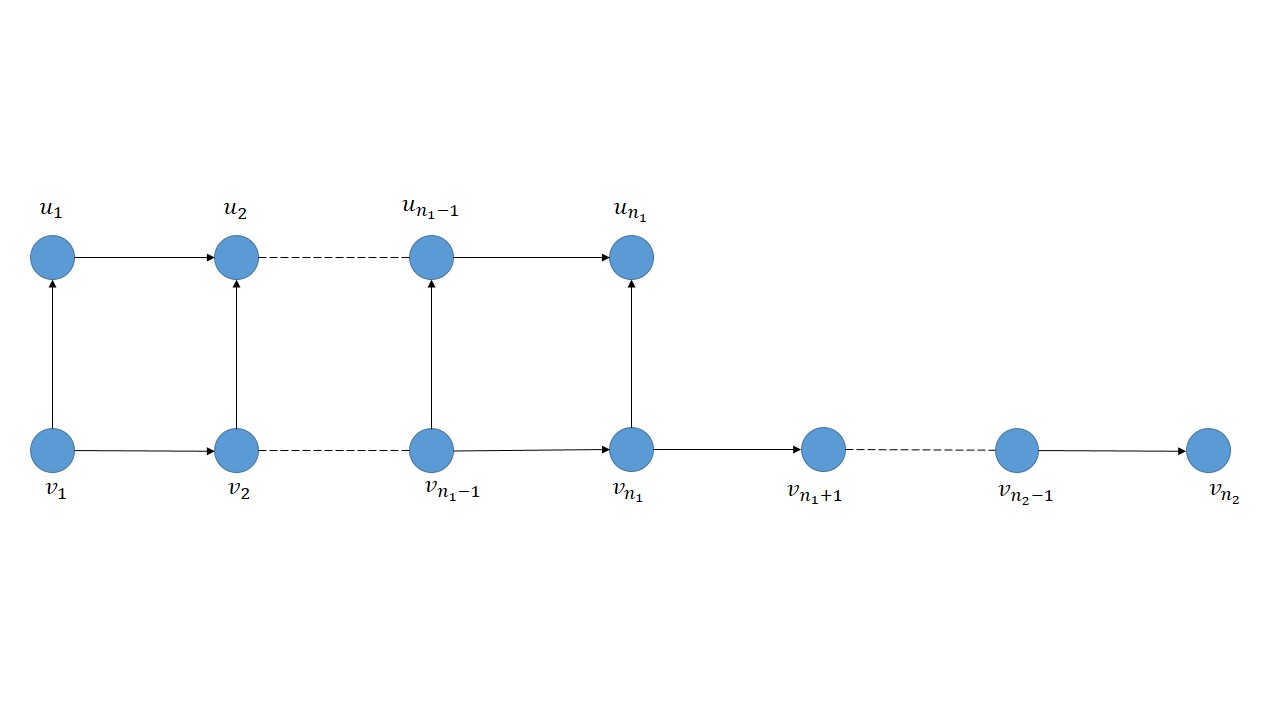}
\end{figure}

with $n_1$ vertices $u_1,\dots,u_{n_1}$ in the upper row and $n_2$ vertices $v_1,\dots,v_{n_2}$ in the lower row.

It corresponds to the dispositional digraph $\mathcal G([n_2,0],[n_1,0])$.

If $r\in\{1,\dots,n_2\}$, we can take the vertex $v=v_r$, and we consider for $i\in\Bbb Z^+$ the corresponding digraphs $(\frak G_{n_1,n_2})_i$ obtained by plugging a directed path $P_{i+1}$ at vertex $v$ as we described in Section \ref{cpdigraph}. We show in the following figure the case when $r=2$,

\begin{figure}[H]
\caption{Digraphs $(\frak G_{n_1,n_2})_i$ for $v=v_2$}
\includegraphics[width=5 in]{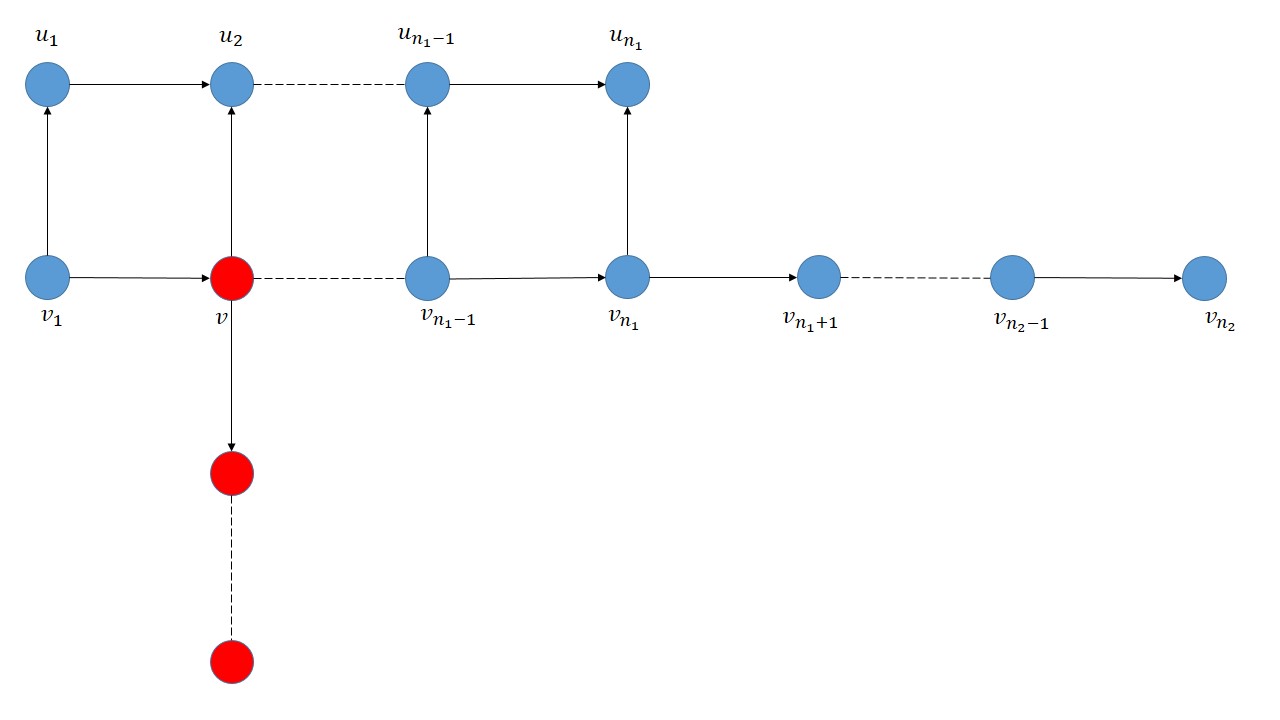}
\end{figure}

\begin{theorem}\label{recrelcp} If $0\leq n_1\leq n_2$ and $n_2\geq r$, then $$T_{\frak G_{n_1,n_2},v}(X)=\sigma(\frak G_{n_1,n_2})\sum_{i\in\Bbb Z^+} f(n_1,n_2,r,i) X^i\ \frac{d^i L_{n_1+n_2-r}}{d X^i}(-X),$$ where the $f(n_1,n_2,r,i)$ satisfy the following recurrence relations:

\begin{enumerate}
\item $f(n_1,n_2,r,i)=0$ for $i>\min\{n_1,r-1\}$
\item $f(n_1,n_2,r,i)=\frac{n_1(n_1-n_2-2)(n_1+n_2-r-i)}{(n_1-n_2-1)(n_1+n_2)(n_1+n_2-r)} f(n_1-1,n_2,r,i)+\frac{(n_1-n_2)(n_2+1)(n_1+n_2-r-i)}{(n_1-n_2-1)(n_1+n_2)(n_1+n_2-r)} f(n_1,n_2-1,r,i)$, for $1\leq n_1\leq n_2-1$, $n_2\geq r+1$, $i\leq\min\{n_1,r-1\}$\newline

\item $f(n_1,r,r,i)=\frac{(i-n_1)(n_1-r-2)}{r^2+r+n_1(1-n_1)}f(n_1-1,r,r,i)+\frac{r^2+(1-n_1)r-n_1}{i!(r^2+r+n_1(1-n_1))}$, for $1\leq n_1\leq r,i\leq n_1$

\item $f(0,n_2,r,0)=1$, for $n_2\geq r$\newline

\item $f(n_1,n_1,r,i)=\frac{2n_1-r-i}{2n_1-r}f(n_1-1,n_1,r,i)$, for $n_1>r$, $i\leq r-1$.
\end{enumerate}
\end{theorem}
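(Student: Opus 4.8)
The plan is to prove the displayed formula by induction on the order $n_1+n_2$ of $\frak G_{n_1,n_2}$, driven by the companion-polynomial recurrence of Theorem~\ref{rcp}. The first task is to identify the minimum points (sinks) of $\frak G_{n_1,n_2}$. When $n_1<n_2$ there are exactly two, the rightmost vertex of each row, and deleting them produces $\frak G_{n_1-1,n_2}$ and $\frak G_{n_1,n_2-1}$, both of order one less and both lowering the index $m:=n_1+n_2-r$ by one; when $n_1=n_2$ only the head of the last vertical arc is a sink. Since $v=v_r$ sits in the longer row, whether $v$ is itself a sink is governed by whether $r=n_2$, and this is precisely what splits the inductive step into the three cases matching recurrences (2), (3) and (5). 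The base case $n_1=0$ is the path $P_{n_2}$ with a tail at $v_r$: here $\sigma=1$ and a short sub-induction on $n_2$ (using $\min(P_{n_2})=\{v_{n_2}\}$) gives $T_{P_{n_2},v_r}(X)=L_{n_2-r}(-X)$, which is recurrence (4) together with the vanishing in (1).

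The technical heart is a single contiguous identity for the basis functions $\phi_{m,i}(X):=X^i\frac{d^i L_m}{dX^i}(-X)$, namely
\[
\Bigl(1+\tfrac{d}{dX}\Bigr)\Bigl(X^i\tfrac{d^i L_{m-1}}{dX^i}(-X)\Bigr)=\frac{m-i}{m}\,\frac{d}{dX}\Bigl(X^i\tfrac{d^i L_{m}}{dX^i}(-X)\Bigr).
\]
I would derive this from the two standard Laguerre relations $\frac{d}{dX}L_m=\frac{d}{dX}L_{m-1}-L_{m-1}$ and $x\frac{d}{dX}L_m=m(L_m-L_{m-1})$, rewritten at the argument $-X$ (with $\ell_m:=L_m(-X)$) as $\ell_m'=\ell_{m-1}'+\ell_{m-1}$ and $\ell_{m-1}=(1-\tfrac{X}{m}\tfrac{d}{dX})\ell_m$. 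The one point demanding care is that $\frac{d^i L_m}{dX^i}(-X)$ means $L_m^{(i)}(-X)$, so it differs from $\frac{d^i}{dX^i}\bigl(L_m(-X)\bigr)$ by a factor $(-1)^i$; tracking this sign is what turns the raw computation into the clean factor $\frac{m-i}{m}$ above.

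Granting the identity, the induction is mechanical. Substituting the inductive expressions for the smaller digraphs (written in the $\phi_{m-1,\cdot}$ basis) into $\frac{d}{dX}T_{\frak G}=\sum_{w}(1+\frac{d}{dX})T_{\frak G-w}$ and applying the identity termwise turns the right-hand side into $\frac{d}{dX}$ of a combination of the $\phi_{m,i}$, which I then integrate and normalize by $T_{\frak G}(0)=\sigma(\frak G)$ (using that $f(\,\cdot\,,0)=1$ propagates through all the recurrences). The scalar factors arise from two sources: the $\frac{m-i}{m}=\frac{n_1+n_2-r-i}{n_1+n_2-r}$ of the identity, and the ratios $\sigma(\frak G_{n_1-1,n_2})/\sigma(\frak G_{n_1,n_2})$, $\sigma(\frak G_{n_1,n_2-1})/\sigma(\frak G_{n_1,n_2})$, which I evaluate from the ballot formula $\sigma(\frak G_{n_1,n_2})=\frac{(n_1+n_2)!\,(n_2-n_1+1)}{n_1!\,(n_2+1)!}$ (a consequence of Theorems~\ref{rec} and \ref{contcc}); their product reproduces exactly the rational coefficients of (2), and in the equal-row case gives (5). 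The sole extra term occurs in (3): when $r=n_2$ the vertex $v$ is a sink, so the companion recurrence omits a summand that the $\sigma$-recurrence $\sigma(\frak G_{n_1,n_2})=\sigma(\frak G_{n_1-1,n_2})+\sigma(\frak G_{n_1,n_2-1})$ retains, and the mismatch appears as a nonzero integration constant $C=\sigma(\frak G_{n_1,n_2-1})$. Expanding that constant in the basis via the Taylor identity $\sum_i\frac{X^i}{i!}L_m^{(i)}(-X)=L_m(0)=1$ contributes precisely $\frac{\sigma(\frak G_{n_1,n_2-1})}{\sigma(\frak G_{n_1,n_2})}\cdot\frac{1}{i!}$, the inhomogeneous part of (3).

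I expect two points to be the real obstacles. The first is pinning down the contiguous identity with the correct normalization; everything downstream is diagonal in $i$ only because of its clean shape, so the $(-1)^i$ bookkeeping and the exact $\frac{m-i}{m}$ must be gotten right. The second is the support statement (1), namely $f=0$ for $i>\min\{n_1,r-1\}$, which is \emph{not} automatic from a diagonal recurrence: nonzero values at $i>0$ are injected only at the boundary $n_2=r$ through the $\frac1{i!}$ term of (3), whose range is $i\le m=n_1$, while the equal-row case (5) carries no injection at all. Tracking how these injections propagate, and handling the coincidence $n_1=n_2=r$ where (3) and (5) overlap, is the delicate part needed to obtain the sharp bound $i\le\min\{n_1,r-1\}$.
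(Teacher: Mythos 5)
Your proposal is correct, and it reaches the theorem by a genuinely different engine from the paper's. The paper works in the verification direction: it treats the five recurrences as the \emph{definition} of the numbers $f(n_1,n_2,r,i)$, forms the ansatz $U(n_1,n_2,r,X)=\sigma(\frak G_{n_1,n_2})\sum_i f(n_1,n_2,r,i)X^i\frac{d^iL_{n_1+n_2-r}}{dX^i}(-X)$, first proves $f(\cdot,\cdot,r,0)=1$ by a preliminary induction (your normalization step), and then verifies that $U$ satisfies the recurrence of Theorem~\ref{rcp} by brute force: it expands $\frac{d^iL_n}{dX^i}(-X)$ into monomials and shows, using $\sigma(\frak G_{n_1,n_2})=\binom{n_1+n_2}{n_1}-\binom{n_1+n_2}{n_1-1}$, that every monomial coefficient of the relevant difference vanishes, displaying only the representative case $r+1\le n_1\le n_2-1$, $0\le i\le r-1$, and declaring the remaining cases \lq very similar\rq; the constant of integration is then killed at $X=0$ exactly as you do. Your contiguous identity is correct (it is the $i$-fold derivative of $xL_m'(x)=m\bigl(L_m(x)-L_{m-1}(x)\bigr)$ combined with $L_m'=L_{m-1}'-L_{m-1}$, both read at the argument $-X$; the factor $\frac{m-i}{m}$ and the sign bookkeeping check out), and it buys what the paper leaves implicit: the coefficients of (2) and (5) are \emph{derived} as $\frac{m-i}{m}$ times ballot-quotients of counters (your quotients agree with the stated rational functions, case (5) simplifying via $\sigma(\frak G_{n_1-1,n_1})=\sigma(\frak G_{n_1,n_1})$), and the inhomogeneous $\frac{1}{i!}$ term of (3) is explained structurally as the integration constant $\sigma(\frak G_{n_1,n_2-1})$ redistributed over the basis by the Taylor identity $\sum_i\frac{X^i}{i!}L_m^{(i)}(-X)=1$ --- precisely the boundary case the paper's \lq similar cases\rq\ remark never writes out. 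Your tracking of the support condition (1) is likewise honest added work, since in the paper (1) is simply part of the definition of the $f$'s rather than something proved. One small repair: at $n_1=n_2=r$ the cases (3) and (5) do not overlap, since (5) requires $n_1>r$; instead, $v=v_r$ is then \emph{not} a sink (the arc $(v_r,u_r)$ is present), so your case-(3) derivation must run homogeneously there --- consistently so, because the inhomogeneous numerator $(r+1)(r-n_1)$ vanishes at $n_1=r$ and $\sigma(\frak G_{r,r})=\sigma(\frak G_{r-1,r})$ forces the integration constant to be zero.
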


\begin{proof} Let $$U(n_1,n_2,r,X)=\sigma(\frak G_{n_1,n_2})\sum_{i\in\Bbb Z^+} f(n_1,n_2,r,i) X^i\ \frac{d^i L_{n_1+n_2-r}}{d X^i}(-X),$$ where the values $f(n_1,n_2,r,i)$ are determined by the recurrence relations described in the statement of this theorem.

 First, we will prove by induction on $n_1+n_2$ that $f(n_1,n_2,r,0)=1\ \forall n_1,n_2,r$. It is trivially true for $n_1+n_2=1$. Let us assume, by induction hypothesis, that it holds for $n^{\prime}_1+n^{\prime}_2=n_1+n_2-1$. Then it holds also for $n^{\prime}_1+n^{\prime}_2=n_1+n_2$, because under the conditions indicated in the second recurrence relation in the statement of this theorem we have $$\frac{n_1(n_1-n_2-2)(n_1+n_2-r)}{(n_1-n_2-1)(n_1+n_2)(n_1+n_2-r)}+\frac{(n_1-n_2)(n_2+1)(n_1+n_2-r)}{(n_1-n_2-1)(n_1+n_2)(n_1+n_2-r)}=1,$$ under the conditions of the third one, $$\frac{-n_1(n_1-r-2)}{r^2+r+n_1(1-n_1)}+\frac{r^2+(1-n_1)r-n_1}{(r^2+r+n_1(1-n_1))}=1,$$ under the conditions of the fourth one we get also $1$, and under the conditions of the fifth one, $$\frac{2n_1-r}{2n_1-r}=1.$$

 Next, we will prove that $U(n_1,n_2,r,X)$ and $\frac{\partial U(n_1,n_2,r,X)}{\partial X}$ satisfy the recurrence obtained in Theorem \ref{rcp}. We need to distinguish the possible casses that can occur. Since the proof is very similar in all cases, we will show it only for $r+1\leq n_1\leq n_2-1$ and $0\leq i\leq r-1$. In this case, $\min (\frak G_{n_1,n_2})-\{v\}=\{u_{n_1},v_{n_2}\}$.

If $n\in\Bbb Z^+$ and $0\leq i\leq n$, then $$\frac{d^i L_n}{d X^i}(-X)=\sum_{j=0}^n \frac{n!(-1)^i}{j!(n-j)!(j-i)!} X^{j-i}.$$ (Obviously, by convention we will agree that a quotient in which the factorial of a negative number appears in the denominator is $0$, so that the index $j$ in the summation goes from $i$ to $n$). We deduce from the previous identity, using the obvious formula \begin{equation}\label{sgnund}\sigma(\frak G_{n_1,n_2})={n_1+n_2 \choose n_1}-{n_1+n_2 \choose n_1-1},\end{equation} that \begin{equation}\label{upnundrx}\begin{split}\frac{\partial U(n_1,n_2,r,X)}{\partial X}-U(n_1-1,n_2,r,X)-U(n_1,n_2-1,r,X)-\frac{\partial U(n_1-1,n_2,r,X)}{\partial X}-\\
\frac{\partial U(n_1,n_2-1,r,X)}{\partial X}=\sum_{j=0}^{n_1+n_2-r-1}\sum_{i=0}^{r-1} \frac{(-1)^i(n_1+n_2-r-1)!}{j!(j-i+1)!(n_1+n_2-r-j-1)!}\\
((i-n_1-n_2+r)(\sigma(\frak G_{n_1-1,n_2})f(n_1-1,n_2,r,i)+\sigma(\frak G_{n_1,n_2-1})f(n_1,n_2-1,r,i))-\\
(n_1+n_2-r)\sigma(\frak G_{n_1,n_2})f(n_1,n_2,r,i))X^j\end{split}\end{equation} Now we obtain from (\ref{sgnund}) and the second recurrence relation in the statement of this theorem that all the coefficients of the powers of $X$ in the right-hand side in (\ref{upnundrx}) are zero, and therefore that $$\frac{\partial U(n_1,n_2,r,X)}{\partial X}=U(n_1-1,n_2,r,X)+U(n_1,n_2-1,r,X)+\frac{\partial U(n_1-1,n_2,r,X)}{\partial X}+\frac{\partial U(n_1,n_2-1,r,X)}{\partial X},$$ as desired. This proves that for every $n_1,n_2,r$ there exists $c_{n_1,n_2,r}\in\Bbb Q$ such that $U(n_1,n_2,r,X)=T_{\frak G_{n_1,n_2},v}(X)+c_{n_1,n_2,r}$. Substituting $X$ by $0$, and having into account that by Theorem \ref{rcp} we have that $T_{\frak G_{n_1,n_2},v}(0)=\sigma(\frak G_{n_1,n_2})$ and that $X^i\ \frac{d^i L_{n_1+n_2-r}}{d X^i}(0)=0\ \forall i>0$, we get $$\sigma(\frak G_{n_1,n_2})f(n_1,n_2,r,0)L_{n_1+n_2-r}(0)=\sigma(\frak G_{n_1,n_2})+c_{n_1,n_2,r}$$ and, since $f(n_1,n_2,r,0)=1$ and $L_{n_1+n_2-r}(0)=1$, we obtain that $c_{n_1,n_2,r}=0\ \forall n_1,n_2,r$, and we are done.
\end{proof}

We will study the associated differential equations for the cases when $r=2,3$:

\begin{proposition}\label{chin2} For $r=2$ it holds that $$T_{\frak G_{n_1,n_2},v}(X)=\begin{cases} 1 & \text{if }n_1=0,n_2=2,\\ ({n_1+n_2 \choose n_1}-{n_1+n_2 \choose n_1-1})(L_{n_1+n_2-2}(-X)+\frac{(n_1n_2+n_1)X\frac{d L_{n_1+n_2-2}}{dX}(-X)}{(n_1+n_2)(n_1+n_2-1)(n_1+n_2-2)}) & \text{elsewhere.}\end{cases}$$
\end{proposition}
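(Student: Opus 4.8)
The plan is to read the proposition off Theorem~\ref{recrelcp} by specializing $r=2$ and pinning down the single nontrivial coefficient that survives. By recurrence relation~(1) of that theorem, $f(n_1,n_2,2,i)=0$ whenever $i>\min\{n_1,1\}$, so the sum $\sum_{i\in\Bbb Z^+}f(n_1,n_2,2,i)\,X^i\frac{d^iL_{n_1+n_2-2}}{dX^i}(-X)$ collapses to the two terms $i=0$ and $i=1$ (and to the single term $i=0$ when $n_1=0$). Since the proof of Theorem~\ref{recrelcp} already establishes $f(n_1,n_2,r,0)=1$ for all admissible triples, the whole problem reduces to computing $f(n_1,n_2,2,1)$ in closed form. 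For the base instance $n_1=0,\,n_2=2$ we have $n_1+n_2-2=0$, so the $i=1$ term is absent, $L_0(-X)=1$, and $\sigma(\frak G_{0,2})=\binom{2}{0}-\binom{2}{-1}=1$; this yields $T_{\frak G_{0,2},v}(X)=1$, matching the first branch.

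In every other admissible case one has $n_1+n_2\geq 3$ (the pair $n_1=n_2=1$ is excluded by $n_2\geq r=2$), so all denominators below are nonzero, and I claim that
$$f(n_1,n_2,2,1)=\frac{n_1(n_2+1)}{(n_1+n_2)(n_1+n_2-1)(n_1+n_2-2)}.$$
I would prove this by induction on $n_1+n_2$, taking as base all cases $n_1=0$, where relation~(1) forces both sides to be $0$. For the inductive step I would verify that the proposed closed form $g(n_1,n_2)$ satisfies each recurrence that can govern $f(\cdot,\cdot,2,1)$: relation~(2) on the interior range $1\leq n_1\leq n_2-1$, $n_2\geq 3$ (with $n_1+n_2-r-i=n_1+n_2-3$ and $n_1+n_2-r=n_1+n_2-2$); relation~(3) on the diagonal $n_2=r=2$, $n_1\in\{1,2\}$; and relation~(5) on the diagonal $n_1=n_2\geq 3$. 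For the interior range the check amounts to substituting $g(n_1-1,n_2)=\frac{(n_1-1)(n_2+1)}{(n_1+n_2-1)(n_1+n_2-2)(n_1+n_2-3)}$ and $g(n_1,n_2-1)=\frac{n_1 n_2}{(n_1+n_2-1)(n_1+n_2-2)(n_1+n_2-3)}$ into the right-hand side of relation~(2) and confirming it collapses to $g(n_1,n_2)$; the boundary relations~(3) and~(5) are one-variable specializations handled the same way, and small numerical checks (e.g.\ $f(1,2,2,1)=\tfrac12$, $f(2,2,2,1)=\tfrac14$, $f(2,3,2,1)=\tfrac2{15}$, $f(3,3,2,1)=\tfrac1{10}$) agree with $g$ and guide the verification.

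Finally, substituting $f(n_1,n_2,2,0)=1$ and the computed value of $f(n_1,n_2,2,1)$ into Theorem~\ref{recrelcp}, rewriting $\sigma(\frak G_{n_1,n_2})=\binom{n_1+n_2}{n_1}-\binom{n_1+n_2}{n_1-1}$ via~(\ref{sgnund}), and using $n_1 n_2+n_1=n_1(n_2+1)$, reproduces exactly the second branch of the stated formula. I expect the main obstacle to be the rational-function identity arising from relation~(2): it is a routine but delicate simplification in the two variables $n_1,n_2$, and one must be attentive to the degenerate denominators—the factor $n_1+n_2-2$ and the factor $n_1-n_2-1$ appearing in relation~(2)—to ensure no division by zero occurs within the admissible range. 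The diagonal cases $n_2=2$ and $n_1=n_2$ require separate, shorter checks against relations~(3) and~(5) rather than~(2).
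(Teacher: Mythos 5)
Your proposal is correct and takes essentially the same route as the paper, whose entire proof is the one-line remark that the stated values satisfy the recurrence relations of Theorem~\ref{recrelcp} --- exactly the verification you perform. Your work just makes the paper's ``elementary algebraic operations'' explicit, isolating $f(n_1,n_2,2,1)=\frac{n_1(n_2+1)}{(n_1+n_2)(n_1+n_2-1)(n_1+n_2-2)}$ and checking relations (1)--(5) by induction on $n_1+n_2$ (the interior check hinging on the factorization $(n_1-n_2-2)(n_1-1)+(n_1-n_2)n_2=(n_1-n_2-1)(n_1+n_2-2)$), which I confirm goes through.
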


\begin{proof} It is easy to check by doing elementary algebraic operations that the values satisfy the recurrence relations of Theorem \ref{recrelcp}.
\end{proof}

Hereinafter, we will use the Pochhammer symbol $(z)_n$ to denote the product $$z(z+1)(z+2)\cdot\dots\cdot (z+n-1).$$

We will use the previous proposition to obtain differential equations satisfied by $T_{\frak G_{n_1,n_2},v}(X)$. 

\begin{theorem}\label{diffeqn1n2} For $r=2$, let $$\alpha(n_1,n_2)=(n_1+n_2-2)_3 (n_1+n_2-1)_2,$$ $$\beta(n_1,n_2)=n_1(n_2+1)(n_1^2+n_2^2+n_1n_2-2n_1-n_2),$$ and $$\gamma(n_1,n_2)=(n_1+n_2-2)_3 (n_1^3+n_2^3+3n_1^2n_2+3n_1n_2^2-3n_1^2-3n_2^2-7n_1n_2+n_1+2n_2).$$ Then the polynomials $T_{\frak G_{n_1,n_2},v}(X)$ satisfy the following differential equation for every $n_1,n_2$ with $0\leq n_1\leq n_2$ and $n_2\geq 2$: $$(\alpha X+\beta X^2) Y^{\prime\prime}+(\alpha+\alpha X+\beta X^2) Y^{\prime}-(\gamma+(n_1+n_2-2)\beta X) Y=0.$$ 
\end{theorem}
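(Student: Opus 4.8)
The plan is to collapse the whole statement onto a single ``reflected'' Laguerre identity together with two scalar relations among $\alpha,\beta,\gamma$. Write $m=n_1+n_2-2$ and $\ell(X)=L_m(-X)$, and recall from Proposition~\ref{chin2} that, up to the nonzero constant $\sigma(\frak G_{n_1,n_2})$ (which may be discarded because the proposed equation is linear and homogeneous), one has $T_{\frak G_{n_1,n_2},v}(X)=\ell+c\,X\ell'$, where $\ell'=\frac{d}{dX}\ell$ and
$$c=-\frac{n_1(n_2+1)}{(n_1+n_2)(n_1+n_2-1)(n_1+n_2-2)}.$$
The sign is the one delicate point: Proposition~\ref{chin2} is written in terms of $\frac{dL_m}{dX}(-X)=L'_m(-X)$, whereas $\ell'=\frac{d}{dX}L_m(-X)=-L'_m(-X)$, so the coefficient picks up a minus sign. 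Substituting $Z=-X$ into Laguerre's equation $ZL''_m(Z)+(1-Z)L'_m(Z)+mL_m(Z)=0$ yields the relation I will use throughout,
$$X\ell''+(1+X)\ell'-m\ell=0,\qquad\text{i.e.}\qquad \ell''=\frac{m\ell-(1+X)\ell'}{X}.$$

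Next I would differentiate $Y=\ell+cX\ell'$ twice, at each step using the identity just displayed to rewrite every occurrence of $\ell''$ as a $\Bbb Q(X)$-combination of $\ell$ and $\ell'$. This gives
$$Y'=cm\,\ell+(1-cX)\ell',\qquad Y''=\frac{(1-cX)m}{X}\,\ell+\Bigl(cm-1+cX-\tfrac1X\Bigr)\ell'.$$
Thus $Y,Y',Y''$ are all expressed over the basis $\{\ell,\ell'\}$, and the only denominator appearing anywhere is the single $1/X$ in $Y''$.

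I would then substitute these into the left side of the proposed equation, with $U=\alpha X+\beta X^2$, $V=\alpha+\alpha X+\beta X^2$, $W=-(\gamma+m\beta X)$, and gather the result as $C_0(X)\ell+C_1(X)\ell'$. Because $U=X(\alpha+\beta X)$ carries the factor $X$, it clears the $1/X$ from $Y''$, so $C_0,C_1$ are genuine polynomials, and it suffices to show each vanishes identically (no linear independence of $\ell,\ell'$ is invoked, since the coefficients themselves turn out to be zero). A short computation shows that the $X^3$, $X^2$ and constant parts of $C_1$ cancel automatically and that
$$C_0=m\alpha(1+c)-\gamma,\qquad C_1=\bigl(c\alpha(m-1)-c\gamma-\beta\bigr)X.$$
Hence the theorem reduces to the two scalar identities $\gamma=m\alpha(1+c)$ and, after using the first to simplify $C_1$, the relation $\beta=-c\alpha(1+cm)$.

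Finally I would verify these two identities directly. Expanding the Pochhammer symbols gives $\alpha=(n_1+n_2-2)(n_1+n_2-1)^2(n_1+n_2)^2$, and the factor $n_1+n_2-2$ cancels so cleanly that $c\alpha=-n_1(n_2+1)(n_1+n_2)(n_1+n_2-1)$; feeding this into $-c\alpha(1+cm)$ reproduces exactly $\beta=n_1(n_2+1)(n_1^2+n_2^2+n_1n_2-2n_1-n_2)$, while $m\alpha(1+c)$ reproduces $\gamma$ once $(n_1+n_2)(n_1+n_2-1)(n_1+n_2-2)-n_1(n_2+1)$ is expanded into the cubic listed in the statement. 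The main obstacle is therefore bookkeeping rather than conceptual: keeping the derivative-convention sign of $c$ correct and carrying out the two polynomial matchings in $n_1,n_2$ without slips. I would close with the remark that in the degenerate case $n_1=0,\,n_2=2$ (so $m=0$) all of $\alpha,\beta,\gamma$ vanish and the equation collapses to $0=0$, consistent with $T_{\frak G_{0,2},v}(X)=1$, so that case needs no separate treatment.
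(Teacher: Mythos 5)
Your proof is correct. I checked the key computations: with $m=n_1+n_2-2$ and the reflected Laguerre relation $X\ell''+(1+X)\ell'-m\ell=0$, your expressions $Y'=cm\,\ell+(1-cX)\ell'$ and $Y''=\frac{(1-cX)m}{X}\ell+\bigl(cm-1+cX-\frac1X\bigr)\ell'$ are right, the factor $X$ in $U=\alpha X+\beta X^2$ does clear the lone $1/X$, the collected coefficients reduce exactly to $C_0=m\alpha(1+c)-\gamma$ and $C_1=\bigl(c\alpha(m-1)-c\gamma-\beta\bigr)X$, and both scalar identities check out: writing $s=n_1+n_2$, one gets $c\alpha=-n_1(n_2+1)s(s-1)$, so $m\alpha(1+c)=s(s-1)(s-2)\bigl[s(s-1)(s-2)-n_1(n_2+1)\bigr]=\gamma$ and $-c\alpha(1+cm)=n_1(n_2+1)\bigl[s(s-1)-n_1(n_2+1)\bigr]=\beta$, matching the stated cubics. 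Your sign bookkeeping for $c$ (the distinction between $\frac{dL_m}{dX}(-X)$ in Proposition~\ref{chin2} and $\frac{d}{dX}L_m(-X)$) and your dispatch of the degenerate case $n_1=0,\,n_2=2$ are both correct and, in fact, more careful than the paper. Where you differ from the paper: its proof is a one-line appeal to Proposition~\ref{chin2} together with Theorem~\ref{lde}, i.e.\ it specializes the general laguerrean equation $\frak L(P(X))$ built from the decomposition $P=QL_n+R\,L_n'$ (here $Q$ constant and $R$ linear, after the reflection $X\mapsto -X$) and then simplifies the resulting unwieldy coefficients down to $\alpha,\beta,\gamma$ --- a derivation that explains where the equation comes from and extends uniformly to the $r=3$ case. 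Your argument is instead a direct verification of the stated equation, which avoids Theorem~\ref{lde}'s heavy generic coefficients entirely and isolates the content in two clean polynomial identities in $n_1,n_2$; the trade-off is that it presupposes the answer, but since the theorem supplies $\alpha,\beta,\gamma$ explicitly, this is a perfectly valid and arguably more transparent proof.
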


\begin{proof} Use Proposition \ref{chin2} and Theorem \ref{lde}.
\end{proof}

For instance, for $n_1=2,n_2=3$ we have that $$T_{\frak G_{2,3},v}(X)=\frac{1}{2}X^3+\frac{11}{2}X^2+13X+5,$$ and the associated differential equation, after factoring out $48$, is $$(25X+2X^2)Y^{\prime\prime}+(25+25X+2X^2)Y^{\prime}-(65+6X)Y=0$$

The differential equations in the previous theorem generalize Laguerre equation, in the sense that, if we do in $Y(X)$ the change of variable $Y=-Z$ then the obtained equation when $n_1=0$ is (up to a constant factor which does not change essentially the equation) Laguerre equation.

When $n_1=n_2$ we have that $\sigma(\frak G_{n_1,n_2})={2n_1 \choose n_1}-{2n_1 \choose n_1-1}=C_{n_1}$, the $n_1$-th Catalan number. In this case the differential equation is simpler than in the general case:

\begin{theorem} If $n\geq 2$, then the polynomials $T_{\frak G_{n,n},v}(X)$ satisfy the differential equation

$(3(n+1)X^2+8(2n-1)^2 X)Y^{\prime\prime}+$

$(3(n+1)X^2+8(2n-1)^2 X+8(2n-1)^2)Y^{\prime}+$

$(6(n+1)(-n+1) X-4(2n-1)(8n^2-13n+3))Y=0.$
\end{theorem}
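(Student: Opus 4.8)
The plan is to derive this equation as the specialization $n_1=n_2=n$ of the general second-order differential equation supplied by Theorem~\ref{diffeqn1n2} for the case $r=2$, namely $$(\alpha X+\beta X^2)Y''+(\alpha+\alpha X+\beta X^2)Y'-(\gamma+(n_1+n_2-2)\beta X)Y=0,$$ and then to observe that, once $n_1=n_2=n$, all four coefficient polynomials $\alpha$, $\beta$, $\gamma$ and $(n_1+n_2-2)\beta$ carry a common factor that may be cancelled because the equation is homogeneous in $Y$. Thus nothing new has to be invented: the entire content is already encoded in Theorem~\ref{diffeqn1n2}, and the proof reduces to a clean evaluation of the coefficients at $n_1=n_2=n$.

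First I would evaluate $\alpha$, $\beta$ and $\gamma$ at $n_1=n_2=n$ using the definition of the Pochhammer symbol. For the leading coefficient, $(2n-2)_3=(2n-2)(2n-1)(2n)=4n(n-1)(2n-1)$ and $(2n-1)_2=(2n-1)(2n)$, so that $\alpha(n,n)=8n^2(n-1)(2n-1)^2$. For the quadratic coefficient, the cubic factor inside $\beta$ collapses to $n^2+n^2+n^2-2n-n=3n^2-3n$, whence $\beta(n,n)=3n^2(n-1)(n+1)$. For $\gamma$, the degree-three polynomial in its definition specializes, after collecting like terms, to $8n^3-13n^2+3n=n(8n^2-13n+3)$, giving $\gamma(n,n)=4n^2(n-1)(2n-1)(8n^2-13n+3)$. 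Finally $(n_1+n_2-2)\beta$ becomes $(2n-2)\beta(n,n)=6n^2(n-1)^2(n+1)$.

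The key observation is then that each of these four polynomials has the common factor $n^2(n-1)$, which is nonzero precisely because $n\geq 2$; explicitly $\alpha(n,n)=n^2(n-1)\cdot 8(2n-1)^2$, $\beta(n,n)=n^2(n-1)\cdot 3(n+1)$, $\gamma(n,n)=n^2(n-1)\cdot 4(2n-1)(8n^2-13n+3)$ and $(2n-2)\beta(n,n)=n^2(n-1)\cdot 6(n-1)(n+1)$. Since the equation is homogeneous, dividing every coefficient by the nonzero constant $n^2(n-1)$ yields an equivalent equation; substituting the reduced coefficients reproduces the coefficient of $Y''$ as $8(2n-1)^2X+3(n+1)X^2$, that of $Y'$ as $8(2n-1)^2+8(2n-1)^2X+3(n+1)X^2$, and that of $Y$ as $-\bigl(4(2n-1)(8n^2-13n+3)+6(n-1)(n+1)X\bigr)$, which is exactly the stated equation after using $6(n+1)(-n+1)=-6(n-1)(n+1)$ to match the term linear in $X$ in the coefficient of $Y$.

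The argument is conceptually immediate, and the only place demanding care is the bookkeeping in the evaluation step, in particular the collapse of the long cubic defining $\gamma(n_1,n_2)$ to $n(8n^2-13n+3)$ and the recognition of the shared factor $n^2(n-1)$. Hence the main (and quite minor) obstacle is simply to carry out these factorizations cleanly and to confirm that cancelling $n^2(n-1)$ is legitimate, which it is exactly because the hypothesis $n\geq 2$ prevents that factor from vanishing.
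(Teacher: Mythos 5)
Your proposal is correct and follows exactly the paper's own proof, which reads ``Use Theorem~\ref{diffeqn1n2} and simplify the factor $n^2(n-1)$'': you specialize the $r=2$ equation at $n_1=n_2=n$ and cancel the common factor $n^2(n-1)$, nonzero since $n\geq 2$. Your explicit evaluations ($\alpha(n,n)=8n^2(n-1)(2n-1)^2$, $\beta(n,n)=3n^2(n-1)(n+1)$, $\gamma(n,n)=4n^2(n-1)(2n-1)(8n^2-13n+3)$, $(2n-2)\beta(n,n)=6n^2(n-1)^2(n+1)$) all check out, so you have simply made explicit the bookkeeping the paper leaves implicit.
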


\begin{proof} Use Theorem \ref{diffeqn1n2} and simplify the factor $n^2(n-1)$.
\end{proof}

So, we will call the differential equation of the previous theorem the {\em Catalan differential equation}, and the associated polynomials $C^{\star}_n(X)=T_{\frak G_{n,n},v}(X)$ the {\em Catalan polynomials} (Do not confuse with the concept of $(q,t)$-Catalan polynomials $C_n(q,t)$ (\cite{qtcp}), which generalize classical Catalan numbers).

Catalan polynomials for $i$ from $2$ to $6$ and their graphical representations, as well as the associated Catalan differential equations, are given next:

For $i=2$,

$$C^{\star}_2(X)=\frac{X^2}{2}+3 X+2$$

\begin{figure}[H]
\caption{Graph of $C^{\star}_2(X)$}
\includegraphics[width=2.5in]{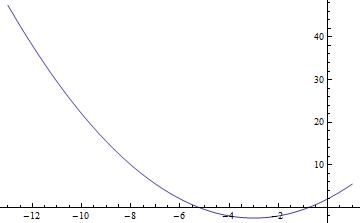}
\end{figure}

Catalan differential equation associated to $C^{\star}_2(X)$:

$$\left( 9\,{X}^{2}+72\,X \right) {\it Y^{\prime\prime}}+ \left( 9\,{X}^{2}+72\,X+72
 \right) {\it Y^{\prime}}- \left( 108+18\,X \right) Y=0$$

For $i=3$,

$$C^{\star}_3(X)= \frac{X^4}{8}+\frac{7 X^3}{3}+12 X^2+18 X+5$$

\begin{figure}[H]
\caption{Graph of $C^{\star}_3(X)$}
\includegraphics[width=2.5in]{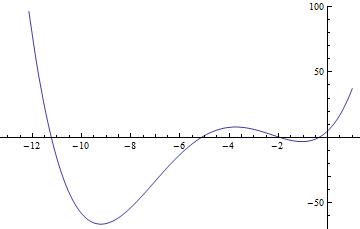}
\end{figure}

Catalan differential equation associated to $C^{\star}_3(X)$:

$$\left( 12\,{X}^{2}+200\,X \right) {\it Y^{\prime\prime}}+ \left( 12\,{X}^{2}+200\,X
+200 \right) {\it Y^{\prime}}- \left( 720+48\,X \right) Y=0
$$

For $i=4$,

$$C^{\star}_4(X)=\frac{X^6}{80}+\frac{59 X^5}{120}+\frac{20 X^4}{3}+\frac{115 X^3}{3}+\frac{185 X^2}{2}+79 X+14$$

\begin{figure}[H]
\caption{Graph of $C^{\star}_4(X)$}
\includegraphics[width=2.5in]{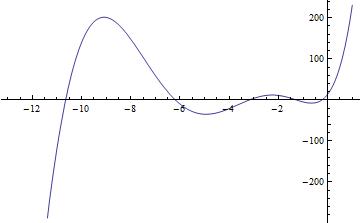}
\end{figure}

Catalan differential equation associated to $C^{\star}_4(X)$:

$$\left( 15\,{X}^{2}+392\,X \right) {\it Y^{\prime\prime}}+ \left( 15\,{X}^{2}+392\,X
+392 \right) {\it Y^{\prime}}- \left( 2212+90\,X \right) Y=0$$

For $i=5$,

$$C^{\star}_5(X)=\frac{X^8}{1440}+\frac{17 X^7}{360}+\frac{49 X^6}{40}+\frac{931 X^5}{60}+\frac{1225 X^4}{12}+343 X^3+539 X^2+322 X+42$$

\begin{figure}[H]
\caption{Graph of $C^{\star}_5(X)$}
\includegraphics[width=2.5in]{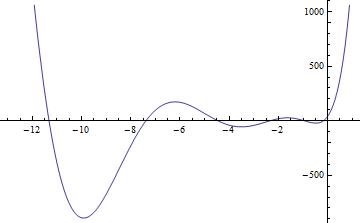}
\end{figure}

Catalan differential equation associated to $C^{\star}_5(X)$:

$$\left( 18\,{X}^{2}+648\,X \right) {\it Y^{\prime\prime}}+ \left( 18\,{X}^{2}+648\,X
+648 \right) {\it Y^{\prime}}- \left( 4968+144\,X \right) Y=0$$

For $i=6$,

$$C^{\star}_6(X)=\frac{X^{10}}{40320}+\frac{157 X^9}{60480}+\frac{123 X^8}{1120}+\frac{171 X^7}{70}+\frac{623 X^6}{20}+\frac{2331 X^5}{10}+1008 X^4+2388 X^3+2781 X^2+1278 X+132$$

\begin{figure}[H]
\caption{Graph of $C^{\star}_6(X)$}
\includegraphics[width=2.5in]{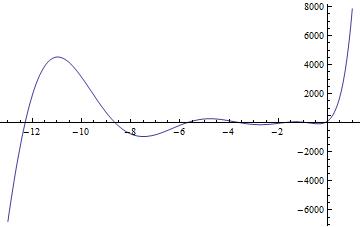}
\end{figure}

Catalan differential equation associated to $C^{\star}_6(X)$:

$$\left( 21\,{X}^{2}+968\,X \right) {\it Y^{\prime\prime}}+ \left( 21\,{X}^{2}+968\,X
+968 \right) {\it Y^{\prime}}- \left( 9372+210\,X \right) Y=0$$

We will prove next that Catalan polynomials are orthogonal, changing the sign of the independent variable, with respect to the inner product $<f,g>=\int_0^{+\infty}f(X)g(X)\exp(-X)\ dX$.

\begin{theorem} For any $i\not=j$, $$\int_0^{+\infty}C^{\star}_i(-X)C^{\star}_j(-X)\exp(-X)\ dX=0.$$
\end{theorem}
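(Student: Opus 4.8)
The plan is to exploit the known orthogonality of the ordinary Laguerre polynomials $L_m(X)$ with respect to the weight $\exp(-X)$ on $[0,+\infty)$, by showing that each $C^{\star}_n(-X)$ lies in the span of just two consecutive Laguerre polynomials. First I would specialize Proposition \ref{chin2} to $n_1=n_2=n$ (with $r=2$), where $\binom{2n}{n}-\binom{2n}{n-1}=C_n$, to obtain
$$C^{\star}_n(X)=C_n\Bigl(L_{2n-2}(-X)+c_n\,X\,\tfrac{dL_{2n-2}}{dX}(-X)\Bigr),\qquad c_n=\frac{n+1}{4(2n-1)(n-1)}.$$
Substituting $X\mapsto -X$ then yields $C^{\star}_n(-X)=C_n\bigl(L_{2n-2}(X)-c_n\,X\,L_{2n-2}'(X)\bigr)$, a polynomial of degree $2n-2$.

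The key algebraic step is to invoke the standard Laguerre identity $X L_m'(X)=m\bigl(L_m(X)-L_{m-1}(X)\bigr)$ (compatible with the recurrence (\ref{reclaguerre}) used earlier). Applying it with $m=2n-2$ and simplifying the scalar $c_n(2n-2)=\frac{n+1}{2(2n-1)}$, I expect the expression to collapse to
$$C^{\star}_n(-X)=\frac{C_n}{2(2n-1)}\bigl(3(n-1)\,L_{2n-2}(X)+(n+1)\,L_{2n-3}(X)\bigr),$$
so that $C^{\star}_n(-X)$ is supported, in the Laguerre basis, only on the indices $\{2n-3,\,2n-2\}$. This can be checked against the explicit polynomials listed above; for instance one gets $C^{\star}_2(-X)=L_2+L_1$ and $C^{\star}_3(-X)=3L_4+2L_3$.

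Finally I would compare supports: for $i<j$ the index set $\{2i-3,2i-2\}$ attached to $C^{\star}_i(-X)$ and the index set $\{2j-3,2j-2\}$ attached to $C^{\star}_j(-X)$ are disjoint, since $2i-2\le 2j-4<2j-3$. Expanding the inner product bilinearly and using $\int_0^{+\infty}L_a(X)L_b(X)\exp(-X)\,dX=0$ for $a\neq b$ then annihilates every cross term, giving $\int_0^{+\infty}C^{\star}_i(-X)C^{\star}_j(-X)\exp(-X)\,dX=0$.

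The main obstacle is the middle step: confirming that the combination of $L_{2n-2}$ and $X L_{2n-2}'$ genuinely reduces to a two-term Laguerre expansion with no lower-degree contamination. A Sturm--Liouville argument is not directly available, because the Catalan differential equation has $n$-dependent leading coefficients rather than a single fixed operator carrying an eigenvalue parameter; it is precisely the two-term-support reduction that makes the orthogonality transparent.
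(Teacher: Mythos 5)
Your proof is correct, and it takes a genuinely different route from the paper's. The paper leaves the representation of Proposition~\ref{chin2} untouched, writing $C^{\star}_n(-X)=C_n\bigl(L_{2n-2}(X)-c_nXL_{2n-2}'(X)\bigr)$, and instead proves a general vanishing criterion: Lemma~\ref{lop} (built on the explicit closed-form evaluation of $\int_0^{+\infty}X^i\frac{d^j}{dX^j}L_n(X)\exp(-X)\,dX$ in Lemma~\ref{intxidlexp}, proved by induction via the three-term Laguerre recurrence) shows $\int_0^{+\infty}X^{j+k}\frac{d^j}{dX^j}L_n(X)\frac{d^k}{dX^k}L_m(X)\exp(-X)\,dX=0$ whenever $\vert m-n\vert>\max\{j,k\}$; expanding $C^{\star}_i(-X)C^{\star}_j(-X)$ bilinearly gives four such integrals with $j,k\in\{0,1\}$ and $\vert m-n\vert=2\vert i-j\vert\geq 2$, so each vanishes. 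You push the algebra one step further with the standard identity $XL_m'(X)=m\bigl(L_m(X)-L_{m-1}(X)\bigr)$, and the step you flag as the ``main obstacle'' is in fact not one: with $c_n(2n-2)=\frac{n+1}{2(2n-1)}$ the representation collapses exactly as you predict to $C^{\star}_n(-X)=\frac{C_n}{2(2n-1)}\bigl(3(n-1)L_{2n-2}(X)+(n+1)L_{2n-3}(X)\bigr)$, since $1-\frac{n+1}{2(2n-1)}=\frac{3(n-1)}{2(2n-1)}$; your test cases $C^{\star}_2(-X)=L_2+L_1$ and $C^{\star}_3(-X)=3L_4+2L_3$ check out against the explicit polynomials listed in the paper, and the disjointness $2i-2\leq 2j-4<2j-3$ for $i<j$ (with $C^{\star}_1(-X)=L_0$ also fitting, its support $\{0\}$ being disjoint from $\{2j-3,2j-2\}$ for $j\geq 2$) finishes the argument by plain Laguerre orthogonality. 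What each approach buys: yours is more elementary (no Pochhammer-symbol computations) and strictly sharper, since the explicit two-term Laguerre expansion identifies the Catalan polynomials in the Laguerre basis and would immediately yield the norms $\int_0^{+\infty}C^{\star}_n(-X)^2\exp(-X)\,dX$ as well; the paper's Lemma~\ref{lop} is coarser but more reusable --- in particular it makes transparent why orthogonality breaks for the order-$3$ Catalan polynomials, where second derivatives enter, $\max\{j,k\}=2$, and adjacent indices give $\vert m-n\vert=2$, violating the strict gap condition, consistent with the paper's counterexample $\int_0^{+\infty}C^{\star 3}_3(-X)C^{\star 3}_4(-X)\exp(-X)\,dX=4$.
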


We will use several lemmas to prove the theorem:

\begin{lemma}\label{intxidlexp} For all $i,j,n\in\Bbb Z^+$, $$\int_0^{+\infty}X^i\frac{d^j}{dX^j}L_n(X)\exp(-X)\ dX=\begin{cases} (-1)^n (n-j+1)_j (i-n+1)_{n-j} (n+1)_{i-n}, & \text{ if }j\leq n\\ 0, & \text{ otherwise} \end{cases}$$
\end{lemma}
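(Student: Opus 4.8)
The plan is to treat the two cases of the statement separately, the second being the substantive one. When $j>n$ there is nothing to prove: $L_n(X)$ is a polynomial of degree $n$, so $\frac{d^j}{dX^j}L_n(X)$ is identically zero and the integral vanishes. So I would concentrate on the range $j\le n$ and evaluate the integral explicitly, by expanding, differentiating and integrating term by term, so that the whole problem is reduced to a finite sum with no analysis left in it.

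Concretely, I would start from the explicit expansion $L_n(X)=\sum_{k=0}^n\frac{n!(-1)^k}{k!(n-k)!\,k!}X^k$ already used in the excerpt, differentiate it $j$ times to get $\frac{d^j}{dX^j}L_n(X)=\sum_{k=j}^n\frac{n!(-1)^k}{k!(n-k)!(k-j)!}X^{k-j}$, multiply by $X^i$, and integrate against $\exp(-X)$ using the Gamma integral $\int_0^{+\infty}X^m\exp(-X)\,dX=m!$. This turns the left-hand side into the finite sum $\sum_{k=j}^n\frac{n!(-1)^k}{k!(n-k)!(k-j)!}(k-j+i)!$, and everything now comes down to evaluating this sum in closed form.

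The heart of the proof is recognizing this sum as a terminating Gauss hypergeometric series at $1$. After the shift $m=k-j$ and rewriting factorials through Pochhammer symbols (via $(m+i)!=i!\,(i+1)_m$, $(m+j)!=j!\,(j+1)_m$, and $\frac{1}{(n-m-j)!}=\frac{(-1)^m (j-n)_m}{(n-j)!}$), the sum collapses, up to the prefactor $(-1)^j\frac{n!\,i!}{j!(n-j)!}$, to ${}_2F_1(i+1,\,j-n;\,j+1;\,1)$, which terminates because $j-n$ is a non-positive integer. I would then invoke the Chu--Vandermonde summation ${}_2F_1(-N,b;c;1)=\frac{(c-b)_N}{(c)_N}$ with $N=n-j$, $b=i+1$, $c=j+1$, obtaining $\frac{(j-i)_{n-j}}{(j+1)_{n-j}}$ and hence, after cancelling $(j+1)_{n-j}=n!/j!$, the compact value $(-1)^j\frac{i!}{(n-j)!}(j-i)_{n-j}$.

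The last step, and the only place where care is really needed, is to reconcile this compact value with the three-factor form in the statement. Using $(n-j+1)_j=n!/(n-j)!$ and, via the Gamma-function reading of the Pochhammer symbol, $(n+1)_{i-n}=\Gamma(i+1)/\Gamma(n+1)=i!/n!$ (valid whether $i\ge n$ or $i<n$), the claimed right-hand side simplifies to $(-1)^n\frac{i!}{(n-j)!}(i-n+1)_{n-j}$; and the sign reversal $(j-i)_{n-j}=(-1)^{n-j}(i-n+1)_{n-j}$, obtained by pulling a factor $-1$ out of each of the $n-j$ factors, turns my value into exactly this. The main obstacle is therefore not any convergence or estimation issue but the correct identification and summation of the hypergeometric series together with the Pochhammer bookkeeping; once Chu--Vandermonde is applied, the equality of the two forms is forced.
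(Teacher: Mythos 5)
Your proof is correct, but it takes a genuinely different route from the paper. The paper proves this lemma by induction on $n$, combining the three-term recurrence $L_{n+1}^{(\alpha)}(X)=\bigl((2n+1+\alpha-X)L_n^{(\alpha)}(X)-(n+\alpha)L_{n-1}^{(\alpha)}(X)\bigr)/(n+1)$ for generalized Laguerre polynomials with the identity $\frac{d^j}{dX^j}L_n(X)=(-1)^j L_{n-j}^{(j)}(X)$ for $j\leq n$, so the three-factor Pochhammer expression on the right-hand side emerges as the quantity that propagates through the induction. You instead evaluate the integral in closed form: expand, differentiate, integrate via the Gamma integral, recognize the resulting finite sum as the terminating series ${}_2F_1(i+1,\,j-n;\,j+1;\,1)$, and close it with Chu--Vandermonde --- an identity the paper itself invokes (in its Chu--Vandermonde lemma preceding the tabloidal form of $F^{ns}_{j,S}$), so your argument stays within the paper's toolkit. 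I checked your Pochhammer bookkeeping: the reduction to $(-1)^j\frac{i!}{(n-j)!}(j-i)_{n-j}$ is right, and the reconciliation with the statement via $(n-j+1)_j=n!/(n-j)!$, the Gamma-function reading $(n+1)_{i-n}=i!/n!$ (needed precisely because the statement allows $i<n$, where the Pochhammer index is negative), and $(j-i)_{n-j}=(-1)^{n-j}(i-n+1)_{n-j}$ is exactly correct; note also that $c=j+1\geq 1$, so Chu--Vandermonde applies without restriction. Your approach buys a fully explicit, self-contained verification with no induction bookkeeping, and it makes visible why the negative-index convention for $(n+1)_{i-n}$ is the intended reading; the paper's inductive sketch is shorter on the page but leaves the induction step (and that same convention) for the reader to reconstruct, whereas your compact intermediate form $(-1)^n\frac{i!}{(n-j)!}(i-n+1)_{n-j}$ also exposes at a glance the vanishing used in the subsequent lemma, namely that the factor $(i-n+1)_{n-j}$ contains $0$ whenever $j\leq i<n$.
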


\begin{proof} It can be done by induction on $n$, using the recurrence relation $$L_{n+1}^{(\alpha)}(X)=\frac{(2n+1+\alpha-X)L_n^{(\alpha)}(X)-(n+\alpha)L_{n-1}^{(\alpha)}(X)}{n+1}$$ for the generalized Laguerre polynomials $L_n^{(\alpha)}(X)$, and having into account that $$\frac{d^j}{d X^j}L_n(X)=\begin{cases} (-1)^j L_{n-j}^{(j)}(X), & \text{ if }j\leq n\\ 0, & \text{otherwise} \end{cases}$$
\end{proof}

\begin{lemma}\label{intiszero} $$\int_0^{+\infty}X^i\frac{d^j}{dX^j}L_n(X)\exp(-X)\ dX=0$$ whenever $j\leq i<n$.
\end{lemma}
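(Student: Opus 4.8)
The plan is to read this off directly from Lemma~\ref{intxidlexp}, which already supplies a closed form for the integral, so that no fresh integration or induction is needed. First I would observe that the hypothesis $j\le i<n$ forces $j<n$, hence in particular $j\le n$, so we land in the first (nonzero-looking) branch of that lemma. Thus the integral equals
$$(-1)^n (n-j+1)_j (i-n+1)_{n-j} (n+1)_{i-n},$$
and the entire remaining task is to show that this product vanishes under the extra constraints.

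The key observation concerns the middle Pochhammer factor $(i-n+1)_{n-j}$. By the definition of the Pochhammer symbol this is a product of $n-j$ consecutive integers, namely $(i-n+1)(i-n+2)\cdots(i-j)$, and note $n-j\ge 1$ precisely because $j<n$. Now $i<n$ gives $i-n+1\le 0$, while $i\ge j$ gives $i-j\ge 0$. Hence this block of consecutive integers begins at a nonpositive value and ends at a nonnegative value, so it must pass through $0$; consequently one of its factors is $0$ and the whole product vanishes.

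Therefore $(i-n+1)_{n-j}=0$ whenever $j\le i<n$, and the closed form from Lemma~\ref{intxidlexp} collapses to $0$, which is exactly the assertion. I expect essentially no obstacle here: the content is the bookkeeping that isolates the vanishing factor, and the only point requiring a moment's care is confirming that the range of the Pochhammer product straddles $0$, which follows from combining the two inequalities $i<n$ and $j\le i$.
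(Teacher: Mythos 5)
Your proposal is correct and is essentially identical to the paper's proof: the paper likewise cites Lemma~\ref{intxidlexp} and observes that $(i-n+1)_{n-j}=0$ under the hypothesis $j\le i<n$. Your write-up merely makes explicit the bookkeeping the paper leaves implicit, namely that the Pochhammer product $(i-n+1)(i-n+2)\cdots(i-j)$ consists of consecutive integers running from a nonpositive value to a nonnegative one and hence contains the factor $0$.
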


\begin{proof} It is a consequence of the previous lemma, because in this case $(i-n+1)_{n-j}=0$.
\end{proof}

\begin{lemma}\label{lop} $$\int_0^{+\infty}X^{j+k}\frac{d^j}{dX^j}L_n(X)\frac{d^k}{dX^k}L_m(X)\exp(-X)\ dX=0$$ whenever $\vert m-n\vert>\max\{j,k\}$
\end{lemma}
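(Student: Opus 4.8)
The plan is to reduce the claim to Lemma~\ref{intiszero} by expanding one of the two factors into monomials and integrating term by term. Note first that both the integrand $X^{j+k}\frac{d^j}{dX^j}L_n(X)\frac{d^k}{dX^k}L_m(X)\exp(-X)$ and the hypothesis $\vert m-n\vert>\max\{j,k\}$ are invariant under simultaneously interchanging the pairs $(n,j)$ and $(m,k)$. Hence I may assume without loss of generality that $m>n$; the opposite case $n>m$ is then obtained verbatim after swapping the two factors.

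Next I would record the degree bookkeeping. If $j>n$ then $\frac{d^j}{dX^j}L_n(X)=0$, the integrand vanishes identically, and there is nothing to prove; so assume $j\le n$, in which case $\frac{d^j}{dX^j}L_n(X)$ is a polynomial of degree exactly $n-j$ with nonzero constant term. Consequently the product $X^{j+k}\frac{d^j}{dX^j}L_n(X)$ is a linear combination of the monomials $X^\ell$ with $j+k\le\ell\le n+k$. Writing this product as $\sum_\ell c_\ell X^\ell$ and using linearity of the integral, the quantity to be computed becomes
$$\sum_{\ell=j+k}^{n+k} c_\ell\int_0^{+\infty} X^\ell\frac{d^k}{dX^k}L_m(X)\exp(-X)\,dX.$$

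The decisive step is to apply Lemma~\ref{intiszero} to each integral in this sum, matching its indices as $i\mapsto\ell$, $j\mapsto k$, $n\mapsto m$; the lemma yields a vanishing integral as soon as $k\le\ell<m$. The lower inequality is immediate, since every occurring index satisfies $\ell\ge j+k\ge k$. The upper inequality is where the hypothesis is used at full strength: the largest index appearing is $\ell=n+k$, and $n+k<m$ is exactly the condition $m-n>k$, which follows from $m-n>\max\{j,k\}\ge k$. Thus every summand vanishes and the integral is zero.

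I expect the only subtlety to be the degree bookkeeping that pins the monomial range to $j+k\le\ell\le n+k$: the lower end $\ell\ge k$ (supplied by the prefactor $X^{j+k}$) is what licenses the hypothesis $k\le\ell$ of Lemma~\ref{intiszero}, while the upper end $\ell\le n+k$ is what must be kept below $m$. This also explains why the bound must involve $\max\{j,k\}$ rather than a single one of $j,k$: in the reduced case $m>n$ one needs $m-n>k$, whereas the symmetric case $n>m$ needs $n-m>j$, and taking the maximum covers both expansions at once.
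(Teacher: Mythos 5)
Your proof is correct and follows exactly the route the paper intends: the paper's proof of this lemma is just ``easily deduced from the previous lemma,'' and your expansion of $X^{j+k}\frac{d^j}{dX^j}L_n(X)$ into monomials $X^\ell$ with $j+k\le\ell\le n+k$, followed by applying Lemma~\ref{intiszero} with indices $(i,j,n)\mapsto(\ell,k,m)$ under the reduction to $m>n$ by symmetry, is precisely that deduction spelled out. The degree bookkeeping and the verification $k\le\ell\le n+k<m$ are all accurate, so nothing is missing.
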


\begin{proof} It can be easily deduced from the previous lemma.
\end{proof}

Now the proof of the theorem follows directly from Lemma \ref{lop} and Proposition \ref{chin2}.

Although $T_{\frak G_{n_1,n_2},v}(X)$ is defined only for $n_2\geq 2$, if we put $C^{\star}_1(X)=1$ (which is consistent with the form of the $T_{\frak G_{n_1,n_2},v}(X)$ in Theorem \ref{recrelcp} and with the fact that $f(n_1,n_2,r,0)=1$ stated at the beginning of its proof), then it is easy to prove that, in fact, the family $$B=\{C^{\star}_1(-X),C^{\star}_2(-X),C^{\star}_3(-X),\dots\}$$ is orthogonal.

Next, we will prove the inextendibility of the family of Catalan polynomials. Since the polynomials $C^{\star}_1(-X),C^{\star}_2(-X),C^{\star}_3(-X),\dots$ are orthogonal, the family $B$ is free. We will prove that it is maximal when subjected to the orthogonality condition:

\begin{theorem} The family $B$ is a maximal orthogonal family.
\end{theorem}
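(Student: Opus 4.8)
The plan is to use the standard characterization of maximality for an already-orthogonal family: since the $C^{\star}_n(-X)$ are pairwise orthogonal, $B$ is maximal exactly when $B^{\perp}=\{0\}$ in $\Bbb Q[X]$ for the inner product $\langle f,g\rangle=\int_0^{+\infty}f(X)g(X)\exp(-X)\,dX$. Indeed, a nonzero $p\perp B$ would be automatically independent of $B$ (it is orthogonal to itself only if it is zero), so $B\cup\{p\}$ would be a strictly larger orthogonal family, and conversely any proper orthogonal extension provides such a $p$. Thus I would reduce the theorem to proving that the only polynomial orthogonal to every $C^{\star}_n(-X)$ is $p=0$.

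The structural input I would extract first is the Laguerre expansion of each member of $B$. Starting from Proposition~\ref{chin2} with $n_1=n_2=n$, so that $\sigma(\frak G_{n,n})=C_n$ and $n_1+n_2-2=2n-2$, and applying the standard identity $X\frac{d}{dX}L_m(X)=mL_m(X)-mL_{m-1}(X)$ with $m=2n-2$, a short computation yields, for $n\ge 2$,
$$C^{\star}_n(-X)=C_n\Bigl(\tfrac{3(n-1)}{2(2n-1)}L_{2n-2}(X)+\tfrac{n+1}{2(2n-1)}L_{2n-3}(X)\Bigr),$$
while $C^{\star}_1(-X)=L_0(X)$. Two features are decisive: both coefficients are strictly positive for $n\ge 2$, and as $n$ runs over $\Bbb Z_{\ge 1}$ the index sets $\{0\}$ and $\{2n-3,2n-2\}_{n\ge 2}$ partition $\Bbb Z_{\ge 0}$, so every Laguerre polynomial appears in exactly one $C^{\star}_n(-X)$.

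Writing a candidate $p=\sum_{k=0}^{N}c_kL_k$ and using orthonormality $\int_0^{+\infty}L_jL_k\exp(-X)\,dX=\delta_{jk}$, the equation $\langle p,C^{\star}_1(-X)\rangle=0$ gives $c_0=0$, and for each $n\ge 2$ the equation $\langle p,C^{\star}_n(-X)\rangle=0$ becomes a single linear relation coupling the consecutive coefficients $c_{2n-3}$ and $c_{2n-2}$. The clean part of the argument is a leading-coefficient analysis: if the top index $N$ is \emph{odd}, then $N=2n-3$ for some $n$, so $N$ is the lower of the two indices of $C^{\star}_n(-X)$; since the partner index $2n-2=N+1$ exceeds $\deg p$, testing $p$ against this $C^{\star}_n(-X)$ isolates $c_N$ and forces $c_N=0$, a contradiction. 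Hence no polynomial of odd degree can be adjoined to $B$, and in particular $B$ cannot be enlarged by a polynomial of a degree it does not already realize.

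The main obstacle is the even-degree case. Every even index occurs only as the \emph{upper} member $2n-2$ of a pair, so the relation from $C^{\star}_n(-X)$ couples $c_{2n-3}$ with $c_{2n-2}$ instead of isolating either, and the top-down isolation that disposed of the odd case no longer applies. To close the proof one must show that the entire infinite system of two-term relations, together with $c_0=0$, admits no nonzero solution, i.e.\ that no nonzero polynomial lies along the anti-diagonal directions orthogonal to each $C^{\star}_n(-X)$ inside $\operatorname{span}\{L_{2n-3},L_{2n-2}\}$. Establishing this rigidity of the coupled pairs is the step I expect to require by far the most care, and it is where the argument must do its real work.
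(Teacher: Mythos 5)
Your two-term expansion $C^{\star}_n(-X)=C_n\left(\frac{3(n-1)}{2(2n-1)}L_{2n-2}(X)+\frac{n+1}{2(2n-1)}L_{2n-3}(X)\right)$ is correct, and your odd-degree step is in substance the paper's one explicit computation: the paper's evaluation $\int_0^{+\infty}P(X)C^{\star}_{n+2}(-X)\exp(-X)\,dX=\frac{-a_{2n+1}C_{n+2}(n+3)(2n+1)!}{2(2n+3)}$ is exactly your isolation of the top coefficient through the lower index $2n+1=2(n+2)-3$ of the relevant pair. But the even-degree ``rigidity'' you defer to the end is not merely the hard part of the proof --- it is false, so your plan cannot be completed. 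Writing $C^{\star}_n(-X)=C_n(\alpha_nL_{2n-2}+\beta_nL_{2n-3})$ with $\alpha_n=\frac{3(n-1)}{2(2n-1)}$ and $\beta_n=\frac{n+1}{2(2n-1)}$, the system ``$c_0=0$ plus one two-term relation per pair'' has an infinite-dimensional solution space: for each $n\ge 2$ the nonzero even-degree polynomial $q_n(X)=\beta_nL_{2n-3}(X)-\alpha_nL_{2n-2}(X)$ satisfies $\langle q_n,C^{\star}_n(-X)\rangle=C_n(\beta_n\alpha_n-\alpha_n\beta_n)=0$ by the orthonormality $\int_0^{+\infty}L_iL_j\exp(-X)\,dX=\delta_{ij}$, and it is orthogonal to every other $C^{\star}_m(-X)$ because the index pairs $\{2m-3,2m-2\}$ and $\{2n-3,2n-2\}$ are disjoint. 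Concretely, $C^{\star}_2(-X)=L_1(X)+L_2(X)$, and (up to scaling) $q_2(X)=L_1(X)-L_2(X)=X-\tfrac{1}{2}X^2$; one checks directly that $\int_0^{+\infty}\bigl(X-\tfrac{1}{2}X^2\bigr)C^{\star}_n(-X)\exp(-X)\,dX=0$ for every $n$. Hence $B\cup\{q_n\mid n\ge 2\}$ is a strictly larger orthogonal family (indeed an orthogonal basis of $\Bbb Q[X]$, since $\alpha_n,\beta_n>0$), and the theorem as stated is false.

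This is not a shortfall of your route relative to the paper's: the published proof disposes of the even case with the single assertion that any $P\in B'-B$ must have odd degree ``because $B$ generates the set of polynomials of even degree,'' and that spanning claim is precisely what your coupled-pairs analysis refutes. The span of $C^{\star}_1(-X),\dots,C^{\star}_k(-X)$ has dimension $k$ inside the $(2k-1)$-dimensional space of polynomials of degree at most $2k-2$, each $C^{\star}_n(-X)$ contributing only one of the two directions in $\mathrm{span}\{L_{2n-3},L_{2n-2}\}$; for instance $X^2=2L_0(X)-4L_1(X)+2L_2(X)$ is not in $\mathrm{span}(B)$, because its $(L_1,L_2)$-component $(-4,2)$ is not proportional to the only available direction $(1,1)$. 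So the step you flagged as requiring ``the real work'' is exactly where the paper's own argument silently fails behind a false spanning claim. Your framework, pushed one step further, yields the correct sharper statement: the orthogonal complement of $B$ in $\Bbb Q[X]$ is the span of the $q_n$, which consists of even-degree polynomials only --- consistent with the valid odd-degree computation, but fatal to maximality.
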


\begin{proof} Let us supposse that $B\subsetneq B^{\prime}$, where $B^{\prime}$ is orthogonal, and let $P(X)\in B^{\prime}-B$. Then, $P(X)$ is of odd degree, because $B$ generates the set of polynomials of even degree. But, if $P(X)$ is of degree $2n+1$ and $P(X)=\displaystyle\sum_{i=0}^{2n+1} a_i X^i$ then we have, by Lemmas \ref{intxidlexp} and \ref{intiszero}, and by Proposition \ref{chin2}, that $$\int_0^{+\infty} P(X) C^{\star}_{n+2}(-X)\exp(-X)\ dX=\frac{-a_{2n+1}C_{n+2}(n+3)(2n+1)!}{2(2n+3)}\not=0,$$ and we get a contradiction.
\end{proof}

Thus, despite the fact that in linear algebra a free family can always be extended to a base, we see that when we take orthogonality into account the situation is different if we want the extension to remain orthogonal.

Now we will analyze the case $r=3$. The proofs are similar to the ones for $r=2$ and will be omitted.

\begin{proposition} For $r=3$ it holds that $$T_{\frak G_{n_1,n_2},v}(X)=\begin{cases} 1 & \text{if }n_1=0,n_2=3,\\ X+1 & \text{if }n_1=0,n_2=4,\\ X+3 & \text{if }n_1=1,n_2=3,\\ \begin{split}({n_1+n_2 \choose n_1}-{n_1+n_2 \choose n_1-1})(L_{n_1+n_2-3}(-X)+\\
\frac{2 n_1(n_2+1)(n_1^2+3n_1n_2+n_2^2-5 n_1-6 n_2+6)XL^{\prime}_{n_1+n_2-3}(-X)}{(n_1+n_2-3)_4(n_1+n_2-3)}+\\
\frac{2 (n_1-1)_2(n_2)_2 X^2L^{\prime\prime}_{n_1+n_2-3}(-X)}{(n_1+n_2-4)_5(n_1+n_2-3)})\end{split} & \text{elsewhere.}\end{cases}$$
\end{proposition}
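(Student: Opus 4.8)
The plan is to follow verbatim the strategy used for the case $r=2$ in Proposition \ref{chin2}: extract from the proposed closed form the explicit values of the coefficients $f(n_1,n_2,3,i)$ and verify that they satisfy every recurrence relation listed in Theorem \ref{recrelcp}, since by that theorem $T_{\frak G_{n_1,n_2},v}(X)$ is uniquely determined by those relations together with its value at $0$. Reading off the formula, the nonzero coefficients are $f(n_1,n_2,3,0)=1$, together with
$$f(n_1,n_2,3,1)=\frac{2 n_1(n_2+1)(n_1^2+3n_1n_2+n_2^2-5 n_1-6 n_2+6)}{(n_1+n_2-3)_4\,(n_1+n_2-3)}$$
and
$$f(n_1,n_2,3,2)=\frac{2 (n_1-1)_2(n_2)_2}{(n_1+n_2-4)_5\,(n_1+n_2-3)},$$
while $f(n_1,n_2,3,i)=0$ for $i\geq 3$.

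First I would dispose of the vanishing condition, the first recurrence relation, which for $r=3$ requires $f=0$ whenever $i>\min\{n_1,2\}$. This is immediate from the Pochhammer factors in the numerators: the factor $n_1$ in $f(n_1,n_2,3,1)$ kills the $i=1$ term when $n_1=0$, and the factor $(n_1-1)_2=(n_1-1)n_1$ in $f(n_1,n_2,3,2)$ kills the $i=2$ term when $n_1\in\{0,1\}$, which is exactly what that relation demands. The value $f(n_1,n_2,3,0)=1$ was already established at the start of the proof of Theorem \ref{recrelcp}, so nothing new is needed there. I would then check the three exceptional small cases ($n_1=0,n_2=3$; $n_1=0,n_2=4$; $n_1=1,n_2=3$) by direct evaluation; these are precisely the configurations in which a denominator such as $(n_1+n_2-3)$ or a Pochhammer block would be singular, or the order $n_1+n_2-3$ of the Laguerre polynomial would be too small, so they must be read off the defining data rather than from the generic formula.

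The core of the argument is verifying the main recurrence relation (the second one) for $i=1$ and $i=2$ in the generic range $1\leq n_1\leq n_2-1$, $n_2\geq 4$, $i\leq\min\{n_1,2\}$. Here I would substitute the closed forms for $f(n_1,n_2,3,i)$, $f(n_1-1,n_2,3,i)$ and $f(n_1,n_2-1,3,i)$, use the expansion (\ref{sgnund}) of $\sigma(\frak G_{n_1,n_2})$ and the standard shift identities for the Pochhammer symbols to put every term over a common denominator, and then check the resulting polynomial identity in the two variables $n_1,n_2$. The remaining boundary recurrences are handled by their dedicated relations rather than by the second one: the diagonal $n_1=n_2$ via the fifth relation, the edge $n_2=r=3$ via the third relation, and $n_1=0$ via the fourth relation, each again by substituting the explicit coefficients and simplifying.

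The main obstacle I anticipate is the algebra for $i=1$: after clearing the Pochhammer denominators one is left with a bivariate polynomial identity whose terms carry the cubic factor $n_1^2+3n_1n_2+n_2^2-5n_1-6n_2+6$, and confirming that the two summands collapse to $f(n_1,n_2,3,1)$ requires a careful, though entirely mechanical, cancellation. The case $i=2$ is comparatively benign because its coefficient is a bare ratio of Pochhammer symbols. I would organize both computations so that the factor $(n_1+n_2-3-i)/\big((n_1-n_2-1)(n_1+n_2)(n_1+n_2-3)\big)$ common to the second recurrence relation is pulled out first, reducing each verification to a single polynomial identity that can be confirmed directly.
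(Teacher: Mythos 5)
Your proposal is correct and follows exactly the route the paper intends: the paper's proof for $r=3$ is declared to be the same as that of Proposition~\ref{chin2}, namely reading off the coefficients $f(n_1,n_2,r,i)$ from the closed form and checking by elementary algebra that they satisfy the recurrence relations of Theorem~\ref{recrelcp}, which (together with the value at $0$) determine $T_{\frak G_{n_1,n_2},v}(X)$ uniquely. Your additional observations --- that the Pochhammer factors $n_1$ and $(n_1-1)_2$ enforce the vanishing condition, and that the three exceptional cases are precisely those where a denominator degenerates --- are accurate refinements of that same verification.
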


\begin{theorem} For $r=3$, let

$\alpha(n_1,n_2)=(n_1+n_2-4)_5 (n_1+n_2-3)_4$,\newline

$\beta(n_1,n_2)=2 n_1 (n_2+1) (n_1^6+7 n_1^5 (n_2-2)+n_1^4 (15 n_2^2-72 n_2+77)+n_1^3 (16 n_2^3-119 n_2^2+279 n_2-208)+n_1^2 (15 n_2^4-107 n_2^3+333 n_2^2-490 n_2+276)+n_1 (7 n_2^5-67 n_2^4+227 n_2^3-384 n_2^2+352 n_2-144)+n_2 (n_2^5-15 n_2^4+74 n_2^3-156 n_2^2+144 n_2-48))$,\newline

$\gamma(n_1,n_2)=2 (-1 + n_1)_2 (n_2)_2 (-2 + n_1 + n_2) (-3 n_1 + n_1^2 + (-1 + n_2) n_2)$,\newline

$\delta(n_1,n_2)=2 n_1 (n_2+1) (n_1^6+7 n_1^5 (n_2-2)+n_1^4 (15 n_2^2-73 n_2+77)+n_1^3 (16 n_2^3-120 n_2^2+285 n_2-208)+n_1^2 (15 n_2^4-108 n_2^3+338 n_2^2-501 n_2+276)+n_1 (7 n_2^5-68 n_2^4+231 n_2^3-390 n_2^2+358 n_2-144)+n_2 (n_2^5-15 n_2^4+75 n_2^3-159 n_2^2+146 n_2-48))$,\newline

$\epsilon=(-4 + n_1 + n_2)_5 (6 n_1 - 29 n_1^2 + 27 n_1^3 - 9 n_1^4 + n_1^5 + 18 n_2 - 78 n_1 n_2 + 91 n_1^2 n_2 - 38 n_1^3 n_2 + 5 n_1^4 n_2 - 39 n_2^2 + 97 n_1 n_2^2 - 60 n_1^2 n_2^2 + 10 n_1^3 n_2^2 + 29 n_2^3 - 38 n_1 n_2^3 + 10 n_1^2 n_2^3 - 9 n_2^4 + 5 n_1 n_2^4 + n_2^5)$,\newline

$\zeta=2 n_1 (n_2+1) (n_1+n_2-4) (n_1^6+n_1^5 (7 n_2-13)+n_1^4 (15 n_2^2-68 n_2+67)+n_1^3 (16 n_2^3-114 n_2^2+249 n_2-171)+n_1^2 (15 n_2^4-102 n_2^3+302 n_2^2-414 n_2+216)+n_1 (7 n_2^5-63 n_2^4+203 n_2^3-327 n_2^2+282 n_2-108)+n_2 (n_2^5-14 n_2^4+65 n_2^3-130 n_2^2+114  n_2-36)).$\newline

Then the polynomials $T_{\frak G_{n_1,n_2},v}(X)$ satisfy the following differential equation for every $n_1,n_2$ with $0\leq n_1\leq n_2$ and $n_2\geq 3$:
$$(\alpha X+\beta X^2+\gamma X^3) Y^{\prime\prime}+(\alpha+\alpha X+\delta X^2+\gamma X^3) Y^{\prime}-(\epsilon+\zeta X+(n_1+n_2-3)\gamma X^2) Y=0$$
\end{theorem}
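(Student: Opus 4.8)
The plan is to reproduce, move for move, the two-step argument that settled the case $r=2$ in Theorem~\ref{diffeqn1n2}: read off the closed form of $T_{\frak G_{n_1,n_2},v}(X)$ from the preceding proposition, and then run that polynomial through the general machine of Theorem~\ref{lde}, which manufactures a second-order linear ODE with polynomial coefficients from any polynomial presented in the form $Q(X)L_n(X)+R(X)\frac{d}{dX}L_n(X)$. The sole new feature relative to $r=2$ is that the $r=3$ expansion also carries a second-derivative term $X^2 L''_{n_1+n_2-3}(-X)$, so one preliminary reduction is needed before Theorem~\ref{lde} applies.

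Writing $n=n_1+n_2-3$, I would start from the identity of the preceding proposition, which expresses $T_{\frak G_{n_1,n_2},v}(X)$ as $\sigma(\frak G_{n_1,n_2})$ times a linear combination of $L_n(-X)$, $XL'_n(-X)$ and $X^2L''_n(-X)$ with explicit rational coefficients, and then eliminate the second derivative by means of the Laguerre relation~(\ref{sdlp}). Evaluating (\ref{sdlp}) at the argument $-X$ and tracking the chain-rule signs yields $X^2L''_n(-X)=nX\,L_n(-X)+X(X+1)L'_n(-X)$, so after substitution $T_{\frak G_{n_1,n_2},v}(X)$ is a combination of $L_n(-X)$ and $L'_n(-X)$ with polynomial coefficients. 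Passing to the auxiliary polynomial $\widehat T(X)=T_{\frak G_{n_1,n_2},v}(-X)$ replaces each $L_n(-X)$ by $L_n(X)$, giving $\widehat T(X)=Q(X)L_n(X)+R(X)L'_n(X)$ with $Q,R\in\Bbb Q[X]$ of the degrees demanded by Theorem~\ref{lde}.

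I would then apply Theorem~\ref{lde} to $\widehat T$, obtaining a laguerrean equation $U(X)\widehat T''+V(X)\widehat T'+W(X)\widehat T=0$, and transform it back by the substitution $X\mapsto -X$, which flips the sign of the first-derivative term and replaces $X$ by $-X$ in $U,V,W$, to recover the equation satisfied by $Y=T_{\frak G_{n_1,n_2},v}(X)$. What is left is to simplify: factor out the common polynomial factor shared by the three coefficients, exactly as the factor $n^2(n-1)$ was cleared in the Catalan specialization, and verify by matching powers of $X$ that the reduced coefficients are precisely $\alpha X+\beta X^2+\gamma X^3$, $\alpha+\alpha X+\delta X^2+\gamma X^3$ and $-(\epsilon+\zeta X+(n_1+n_2-3)\gamma X^2)$ for the stated $\alpha,\dots,\zeta$.

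The obstacle here is computational, not conceptual. Theorem~\ref{lde} returns a very large rational expression, and the announced coefficients $\beta,\delta,\epsilon,\zeta$ are high-degree polynomials in $n_1$ and $n_2$, so confirming that the raw output collapses exactly to the stated factored forms is a heavy symbolic computation that I would delegate to a computer-algebra system. Two smaller points also demand care: the $X\mapsto -X$ substitutions must be applied consistently so that no stray sign corrupts $\epsilon$ or the $(n_1+n_2-3)$ coefficient, and the exceptional low-order cases $(n_1,n_2)\in\{(0,3),(0,4),(1,3)\}$ flagged in the preceding proposition, where the generic closed form degenerates, should be checked against the equation separately.
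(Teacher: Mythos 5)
Your proposal is correct and takes essentially the same route as the paper, which omits the $r=3$ proof with the remark that it parallels the $r=2$ case: read the closed form of $T_{\frak G_{n_1,n_2},v}(X)$ from the preceding proposition and feed it through Theorem~\ref{lde}, delegating the coefficient matching to symbolic computation. Your preliminary elimination of the $X^2L''_{n_1+n_2-3}(-X)$ term via the relation (\ref{sdlp}), giving $X^2L''_n(-X)=nX\,L_n(-X)+X(X+1)L'_n(-X)$, and your sign bookkeeping under $X\mapsto -X$ are exactly the reduction machinery (cf.\ Proposition~\ref{stairab} and the staircase-section procedure) that the paper's method presupposes, and your flagging of the degenerate cases $(n_1,n_2)\in\{(0,3),(0,4),(1,3)\}$ for separate verification is appropriate.
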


Just as it was the case for $r=2$, when $n_1=n_2=n$ the differential equation takes a simpler form. In this case the factor $4(n-2)_3(n-1)_2$ can be cancelled, and we get\newline

$((72 - 384 n + 704 n^2 - 512 n^3 + 128 n^4) X + (48 - 25 n - 42 n^2 + 31 n^3) X^2 + (2 n + 2 n^2) X^3) Y^{\prime\prime}+(72 - 384 n + 704 n^2 - 512 n^3 + 128 n^4 + (72 - 384 n + 704 n^2 - 512 n^3 + 128 n^4) X + (48 - 27 n - 44 n^2 + 31 n^3) X^2 + (2 n + 2 n^2) X^3) Y^{\prime}-2( (-72 + 558 n - 1438 n^2 + 1560 n^3 - 744 n^4 + 128 n^5) + (-72 + 90 n + 37 n^2 - 94 n^3 + 31 n^4) X +  (-3 n - n^2 + 2 n^3) X^2) Y=0$\newline

We will call the diferential equation of the previous theorem the {\em Catalan differential equation} of order $3$, and the associated polynomials $C^{\star 3}_n=T_{\frak G_{n,n},v}$ the {\em Catalan polynomials} of order $3$.

The property of orthogonality of Catalan polynomials which held for $r=2$ is no longer true for $r=3$; for instance, $$\int_0^{+\infty}C^{\star 3}_3(-X)C^{\star 3}_4(-X)\exp(-X)\ dX=4$$

\section{Staircase differential equations}\label{sdec}

We will consider, for $n\in\Bbb N$, the staircase digraph $\frak S_n$, the vertex $v=v_1$, and the companion polynomial $T_{\frak S_n,v}(X)$. In this section we will do a numerical study of the first polynomials and their associated differential equations.

\begin{proposition} If we call $f(n,i)$ to $\sigma(\frak S_n+P_{i+1})$, then $f(n,i)$ satisfies the recurrence relation

$$f(n,i)=\frac{f(n,i-1)+\sum_{j=0}^{n-1} {n+i-1 \choose i+j} f(j,i)s_{n-1-j}}{2}\text{ whenever }n\geq 2,i\geq 1,$$

with the initial conditions

$$f(0,i)=1$$

$$f(1,i)=1$$

$$f(n,0)=s_n$$

where the $s_{n-1-j}$ and the $s_n$ are the corresponding Euler zigzag numbers.

\end{proposition}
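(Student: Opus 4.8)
The plan is to exploit the symmetric form of Theorem \ref{rec}, which for the digraph $\frak H=\frak S_n+P_{i+1}$ (the staircase with the directed path of $i+1$ vertices attached at $v=v_1$) reads
$$2\,\sigma(\frak H)=\sum_{u\in\min(\frak H)}\sigma(\frak H-u)+\sum_{w\in\max(\frak H)}\sigma(\frak H-w);$$
the factor $2$ on the left is precisely the $2$ in the denominator of the claimed recurrence. First I would determine the extreme points of $\frak H$. Since $v_1$ is a source of $\frak S_n$ and the attached path only adds the out-arc $(v_1,z_1)$, the vertex $v_1$ remains a maximum point, the interior path vertices $z_1,\dots,z_{i-1}$ are neither maximum nor minimum, and the free end $z_i$ becomes a minimum point. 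Hence $\max(\frak H)$ is the set of odd-indexed staircase vertices $v_{2k+1}$, while $\min(\frak H)=\{v_{2k}\}\cup\{z_i\}$.

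Next I would evaluate the maximum-point sum. Deleting $v_{2k+1}$ severs $\frak H$ into two components: the head $v_1,\dots,v_{2k}$ still carries the path and is a copy of $\frak S_{2k}+P_{i+1}$ of order $2k+i$, whose counter is $f(2k,i)$, while the tail $v_{2k+2},\dots,v_n$ is a reverse staircase of order $n-2k-1$, whose counter is $s_{n-2k-1}$ by Proposition \ref{countreverse}. The case $k=0$ must be treated with care: removing $v_1$ detaches the path, leaving a plain directed path $P_i$ (of counter $1=f(0,i)$ by Proposition \ref{contpath}, and of order $i=2\cdot 0+i$) together with the reverse staircase on $v_2,\dots,v_n$. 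In every case Theorem \ref{contcc} gives $\sigma(\frak H-v_{2k+1})=\binom{n+i-1}{2k+i}f(2k,i)s_{n-2k-1}$, so writing $j=2k$ the maximum-point sum is $\sum_{j\text{ even}}\binom{n+i-1}{i+j}f(j,i)s_{n-1-j}$.

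Then I would evaluate the minimum-point sum. The term from $z_i$ is $\sigma(\frak S_n+P_i)=f(n,i-1)$, producing the lone $f(n,i-1)$ in the recurrence. Deleting an even vertex $v_{2k}$ splits $\frak H$ into $\frak S_{2k-1}+P_{i+1}$ (counter $f(2k-1,i)$, order $2k-1+i$) and a staircase of order $n-2k$ (counter $s_{n-2k}$); by Theorem \ref{contcc} this contributes $\binom{n+i-1}{2k-1+i}f(2k-1,i)s_{n-2k}$, which with $j=2k-1$ is $\sum_{j\text{ odd}}\binom{n+i-1}{i+j}f(j,i)s_{n-1-j}$. Adding the two sums, the even and odd indices $j$ together exhaust $\{0,1,\dots,n-1\}$, so
$$2f(n,i)=f(n,i-1)+\sum_{j=0}^{n-1}\binom{n+i-1}{i+j}f(j,i)s_{n-1-j},$$
which is the asserted identity. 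The initial conditions follow directly: $f(0,i)$ and $f(1,i)$ both equal $\sigma(P_{i+1})=1$, since attaching the path to the empty digraph or to a single vertex merely reproduces $P_{i+1}$, and $f(n,0)=\sigma(\frak S_n)=s_n$.

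The main obstacle is not any single computation but the bookkeeping of the induced subdigraphs after each deletion: one must verify that the head component is genuinely $\frak S_{2k}+P_{i+1}$ (resp. $\frak S_{2k-1}+P_{i+1}$), that the tail component, although it appears as a reverse staircase, has counter $s_{\bullet}$ by Proposition \ref{countreverse}, and that the parities $j=2k$ and $j=2k-1$ interleave to cover every index from $0$ to $n-1$ without gaps or repetitions. Confirming the binomial coefficient $\binom{n+i-1}{i+j}$ as the number of ways to distribute the $n+i-1$ labels of the deleted digraph between the two components of orders $i+j$ and $n-1-j$ is the one place where an index slip would be easy.
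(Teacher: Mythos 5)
Your proof is correct and takes essentially the same route as the paper, whose one-line proof likewise applies Theorem \ref{rec} to $\frak S_n+P_{i+1}$ at both the minimum and the maximum points and sums the two identities, yielding the factor $2$, the lone term $f(n,i-1)$ from deleting $z_i$, and the even/odd interleaving of the index $j$ exactly as in your computation. Your bookkeeping of the components (heads $\frak S_j+P_{i+1}$, tails that are staircases or reverse staircases handled via Proposition \ref{countreverse}, and the coefficient $\binom{n+i-1}{i+j}$ from Theorem \ref{contcc}) correctly supplies the details the paper leaves implicit.
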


\begin{proof} Use Theorem \ref{rec} for both the minimum points and the maximum points, and sum the obtained identities.
\end{proof}

Note that the initial condition $f(1,i)=1$ in the previous proposition might seem redundant at first glance, but it really is not, and the result is false if we remove it.

As a consequence of the previous proposition, the following result can be easily obtained inductively:

\begin{proposition}\label{stairfni} $$f(n,i)=(-1)^{\lfloor\frac{i}{2}\rfloor}s_{n+i}+\sum_{j=0}^{\lfloor\frac{i}{2}\rfloor-1}(-1)^j{n+i \choose i-2j-1}s_{n+2j+1}\ \forall n\in\Bbb N,\forall i\in\Bbb Z^+$$
\end{proposition}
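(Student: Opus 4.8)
The plan is to regard the closed-form right-hand side as a candidate function $g(n,i)$ and to verify that $g$ obeys exactly the recurrence and the three initial conditions of the previous proposition. Those data determine their solution uniquely: ordering the pairs $(n,i)$ lexicographically by $i$ and then by $n$, the two-term recurrence expresses $f(n,i)$ through $f(n,i-1)$ and through the values $f(j,i)$ with $j<n$, all of which are strictly earlier, while the rows $n\in\{0,1\}$ and the column $i=0$ are prescribed by the initial conditions. Hence it suffices to check that $g$ satisfies the same relations, and I would organize this as an outer induction on $i$ with, for each fixed $i\ge 1$, an inner induction on $n$. (A generating-function route is also conceivable, via $\sum_i f(n,i)X^i/i!=T_{\frak S_n,v_1}(X)\exp(X)$, but the floor functions in the exponents make the inductive verification cleaner.)

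First I would dispose of the boundary data. The condition $g(n,0)=s_n$ is immediate, since for $i=0$ the sum over $j$ is empty and $(-1)^0 s_{n+0}=s_n$. The two remaining conditions $g(0,i)=1$ and $g(1,i)=1$ are genuine alternating identities among the Euler zigzag numbers, and I would establish them by a short separate induction on $i$ using the convolution formula $2s_{m+1}=\sum_{k=0}^{m}\binom{m}{k}s_k s_{m-k}$ derived from Theorem \ref{recstarcaise}. These base cases are unavoidable: the two-term recurrence of the previous proposition is stated only for $n\ge 2$, so the rows $n=0$ and $n=1$ must be checked by hand.

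The heart of the argument is the inductive step. Fixing $i\ge 1$ and assuming $g(n,i-1)=f(n,i-1)$ for all $n$ together with $g(j,i)=f(j,i)$ for all $j<n$, the recurrence of the previous proposition reduces the goal to the pure identity
\[
2\,g(n,i)=g(n,i-1)+\sum_{j=0}^{n-1}\binom{n+i-1}{i+j}\,g(j,i)\,s_{n-1-j},\qquad n\ge 2,\ i\ge 1 .
\]
I would substitute the explicit expression for $g(j,i)$ into the sum and split it into the contribution of the leading term $(-1)^{\lfloor i/2\rfloor}s_{j+i}$ and the contributions of the internal terms $(-1)^{\ell}\binom{j+i}{i-2\ell-1}s_{j+2\ell+1}$. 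For the leading part, the subscripts $j+i$ and $n-1-j$ add up to $n+i-1$ and the weight $\binom{n+i-1}{i+j}$ is precisely the one appearing in the zigzag convolution, so that after the substitution $k=i+j$ the partial sum $\sum_{k\ge i}\binom{n+i-1}{k}s_k s_{(n+i-1)-k}$ collapses toward $2s_{n+i}$. For the cross terms I would interchange the order of summation and use a Vandermonde-type convolution to simplify the products $\binom{n+i-1}{i+j}\binom{j+i}{i-2\ell-1}$, after which another application of the zigzag convolution merges the remaining products $s_a s_b$ into single zigzag numbers; equating the coefficient of each $s_m$ on the two sides then yields the identity.

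The main obstacle I anticipate is not any single identity but the boundary and parity bookkeeping. The convolution that would produce $2s_{n+i}$ from the leading part is only a \emph{partial} sum (the indices $k=0,\dots,i-1$ are missing), and the deficit must be cancelled exactly by $g(n,i-1)$ together with the internal terms of $g(j,i)$; moreover the governing count $\lfloor i/2\rfloor$ differs from $\lfloor(i-1)/2\rfloor$, so the clean cancellation in the coefficient-matching step has to be carried out separately for even and odd $i$, tracking the top term $(-1)^{\lfloor i/2\rfloor}s_{n+i}$ against the uppermost term of the sum. Once these boundary corrections are aligned, everything else—the uniqueness argument and the verification of the base rows—is routine.
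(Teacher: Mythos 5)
Your proposal is correct and follows essentially the same route as the paper, which likewise proves the formula inductively as a consequence of the preceding recurrence proposition; your uniqueness framework (lexicographic ordering of the pairs $(n,i)$, with the rows $n\in\{0,1\}$ and column $i=0$ as prescribed base data) and the verification via the zigzag convolution $2s_{m+1}=\sum_{k=0}^m\binom{m}{k}s_ks_{m-k}$ correctly flesh out what the paper leaves implicit. Your anticipated bookkeeping is also accurate: the partial convolution's deficit over $k=0,\dots,i-1$ is indeed what $g(n,i-1)$ and the internal terms absorb, with Pascal's rule converting the resulting $\binom{n+i-1}{i-2\ell-1}$ weights into the $\binom{n+i}{i-2\ell-1}$ of the statement.
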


Now, if $T_{\frak S_n,v}(X)=\displaystyle\sum_{i=0}^{n-1}a_{n,i}X^i$, then the $a_{n,i}$ can be obtained from the previous proposition and  the facts that $a_{n,0}=s_n$ and that, from the definition of companion polynomial, we have that $$a_{n,i}=\frac{f(n,i)}{i!}-\sum_{k=0}^{i-1}\frac{a_{n,k}}{(i-k)!}\ \forall i\geq 1.$$ We will show in the following proposition the values for $i\leq 5$:

\begin{proposition}\label{ani05}

$a_{n,0}=s_n$\newline

$a_{n,1}=s_{n+1}-s_n$\newline

$a_{n,2}=\frac{1}{2} \left(s_n+n s_{n+1}-s_{n+2}\right)$\newline

$a_{n,3}=\frac{1}{12} \left(\left(n^2-n\right) s_{n+1}-2 s_n+6 s_{n+2}-2 s_{n+3}\right)$\newline

$a_{n,4}=\frac{1}{144} \left(\left(n^3-3 n^2+2 n\right) s_{n+1}+6 s_n-36 s_{n+2}-6 n s_{n+3}+6 s_{n+4}\right)$\newline

$a_{n,5}=\frac{1}{2880} \left(\left(12 n-12 n^2\right) s_{n+3}+\left(n^4-6 n^3+11 n^2-6 n\right) s_{n+1}-24 s_n+240 s_{n+2}-120 s_{n+4}+24 s_{n+5}\right)$
\end{proposition}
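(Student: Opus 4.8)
The plan is to treat the claimed formulas as the output of a finite, purely mechanical computation governed by the triangular recurrence
$$a_{n,i}=\frac{f(n,i)}{i!}-\sum_{k=0}^{i-1}\frac{a_{n,k}}{(i-k)!},\qquad a_{n,0}=s_n,$$
recorded immediately before the statement (which itself comes from equating coefficients of $X^i$ in $F_{\{\frak S_n+P_{i+1}\}_i}(X)=T_{\frak S_n,v}(X)\exp(X)$ via the Cauchy product). The only external input needed is the explicit form of $f(n,i)$, which I would read off from Proposition~\ref{stairfni}. Evaluating that proposition at $i=0,1,2,3,4,5$ (so that $\lfloor i/2\rfloor$ equals $0,0,1,1,2,2$ respectively) gives $f(n,0)=s_n$, $f(n,1)=s_{n+1}$, $f(n,2)=(n+2)s_{n+1}-s_{n+2}$, $f(n,3)=\binom{n+3}{2}s_{n+1}-s_{n+3}$, $f(n,4)=\binom{n+4}{3}s_{n+1}-(n+4)s_{n+3}+s_{n+4}$, and $f(n,5)=\binom{n+5}{4}s_{n+1}-\binom{n+5}{2}s_{n+3}+s_{n+5}$.

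With these in hand, the argument proceeds by a short induction on $i$ (equivalently, by simply unwinding the recurrence), the point being that $a_{n,i}$ depends only on $f(n,i)$ together with the already-established coefficients $a_{n,0},\dots,a_{n,i-1}$. At each step I would substitute the previously proven linear combinations of Euler zigzag numbers $s_{n+m}$ for the lower $a_{n,k}$, divide by the appropriate factorials, and then collect the coefficient of each $s_{n+m}$ separately. For example, at $i=2$ the combination $\tfrac{1}{2}f(n,2)-\tfrac{1}{2}a_{n,0}-a_{n,1}$ reduces, after cancelling the $2s_{n+1}$ and the $s_n$ contributions, to $\tfrac{1}{2}(s_n+n\,s_{n+1}-s_{n+2})$, matching the claim; the cases $i=3,4,5$ follow by the same bookkeeping, now using the expansions of $\binom{n+3}{2}$, $\binom{n+4}{3}$ and $\binom{n+5}{4}$ as polynomials in $n$.

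Since no two distinct $s_{n+m}$ satisfy any universal linear relation in $n$, matching the claimed expressions amounts to verifying, for each $i$ and each shift $m$, that the scalar (polynomial-in-$n$) coefficient of $s_{n+m}$ produced by the recurrence agrees with the stated one; this is a finite check that I would carry out shift by shift. The main obstacle is not conceptual but computational: for $i=4$ and especially $i=5$ the binomials contribute cubic and quartic polynomials in $n$, several $s_{n+m}$ with $m$ ranging over $0,1,2,\dots,i$ must be tracked simultaneously, and the cross-terms coming from the sum $\sum_{k<i}a_{n,k}/(i-k)!$ must cancel precisely. The bookkeeping is routine but error-prone, so in practice I would organise the computation as a table indexed by $(i,m)$, recording the coefficient of each $s_{n+m}$, and confirm that the net quartic, cubic, quadratic, linear and constant contributions in $n$ collapse to the coefficients displayed in the statement.
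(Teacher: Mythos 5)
Your proposal is correct and takes essentially the same route as the paper, which likewise obtains Proposition~\ref{ani05} by substituting the explicit values of $f(n,i)$ from Proposition~\ref{stairfni} into the triangular recurrence $a_{n,i}=\frac{f(n,i)}{i!}-\sum_{k=0}^{i-1}\frac{a_{n,k}}{(i-k)!}$ recorded immediately before the statement. Your evaluations $f(n,0)=s_n$, $f(n,1)=s_{n+1}$, $f(n,2)=(n+2)s_{n+1}-s_{n+2}$, $f(n,3)=\binom{n+3}{2}s_{n+1}-s_{n+3}$, $f(n,4)=\binom{n+4}{3}s_{n+1}-(n+4)s_{n+3}+s_{n+4}$, $f(n,5)=\binom{n+5}{4}s_{n+1}-\binom{n+5}{2}s_{n+3}+s_{n+5}$ are all correct, and unwinding the recurrence with them does reproduce every displayed coefficient through $i=5$.
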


Note that the independent terms $a_{n,0}$ are just the Euler zigzag numbers. It is obvious from the definition of companion polynomial that $i!a_{n,i}$ is always an integer. We will call to $$s_{n,i}=i!a_{n,i}$$ the generalized zigzag number of order $i$. We will give now the first $15$ ones for $i$ up to $5$:\newline

For $i=0:1,1,1,2,5,16,61,272,1385,7936,50521,353792,2702765,22368256,199360981$\newline

For $i=1:0,0,1,3,11,45,211,1113,6551,42585,303271,2348973,19665491,176992725,1704396331$\newline

For $i=2:0,0,0,1,8,49,308,2031,14352,108833,885676,7715951,71760856,710303697,7460451204$\newline

For $i=3:0,0,0,0,2,25,238,2100,18594,169431,1610158,16042070,167927762,1847548053,21351747870$\newline

For $i=4:0,0,0,0,0,5,96,1276,15104,172319,1967392,22887002,273977600,3392750041,43582823456$\newline

For $i=5:0,0,0,0,0,0,16,427,7600,115791,1654336,23112848,322993424,4573319530,66117354592$\newline 

It is an immediate consequence ot Theorem \ref{lcxidilag} that for every $n\in\Bbb N$ there exists $n$ rational numbers $g(n,0),\dots,g(n,n-1)$ such that \begin{equation}\label{ptsnv}T_{\frak S_n,v}(X)=\sum_{i=0}^{n-1} g(n,i)X^i\frac{d^i}{dX^i}L_{n-1}(-X)\end{equation}

The first $6$ polynomials can be obtained from Proposition \ref{ani05}. They are shown next along with the tuples $(g(n,0),\dots,g(n,n-1))$:

For $n=1$, $$1$$ $$(1)$$

For $n=2$, $$X+1$$

$$(1,0)$$

For $n=3$, $$\frac{1}{2}\,{X}^{2}+3\,X+2$$

$$(2,\frac{1}{2},0)$$

For $n=4$, $$\frac{1}{3}\,{X}^{3}+4\,{X}^{2}+11\,X+5$$

$$(5,\frac{4}{3},\frac{1}{6},0)$$

For $n=5$, $${\frac {5\,{X}^{4}}{24}}+{\frac {25\,{X}^{3}}{6}}+{\frac {49\,{X}^{2}}{2}}+45\,X+16$$

$$(16,{\frac {19}{4}},\frac{5}{6},\frac{1}{12},0)$$

For $n=6$, $$\frac{2}{15}\,{X}^{5}+4\,{X}^{4}+{\frac {119\,{X}^{3}}{3}}+154\,{X}^{2}+211\,X+61
$$

$$(61,{\frac {94}{5}},{\frac {37}{10}},\frac{1}{2},\frac{1}{24},0)$$

Now we will analyze their associated differential equations. We conclude from (\ref{ptsnv}) and (\ref{dxilnx}) that $T_{\frak S_n,v}(-X)=\sum_{i=0}^{n-1} g(n,i)(-X)^i\frac{d^i}{dX^i}L_{n-1}(X)=$

$\sum_{i=0}^{n-1}g(n,i)(-X)^i (A_{n-1,i}(X) L_{n-1}(X)+B_{n-1,i}(X)\frac{d}{d X}L_{n-1}(X))=Q_n(X) L_{n-1}(X)+R_n(X) \frac{d}{d X}L_{n-1}(X)$, where $$Q_n(X)=\sum_{i=0}^{n-1}g(n,i)(-X)^i A_{n-1,i}(X)$$ and $$R_n(X)=\sum_{i=0}^{n-1}g(n,i)(-X)^i B_{n-1,i}(X).$$  Now, since $(-X)^i A_{n-1,i}(X)$ and $(-X)^i B_{n-1,i}(X)$ are polynomials, then the Laguerrean differential equation $\frak L(T_{\frak S_n,v}(-X))$ is of the form $$P_{1,n}(X)\frac{d^2 Y}{d X^2}+P_{2,n}(X)\frac{d Y}{d X}+P_{3,n}(X) Y(X)=0,$$ where $P_{1,n}(X),P_{2,n}(X),P_{3,n}(X)$ are polynomials. Finally, the differential equation associated to $y(X)=T_{\frak S_n,v}(X)$ is $$P_{1,n}(-X)\frac{d^2 Y}{d X^2}-P_{2,n}(-X)\frac{d Y}{d X}+P_{3,n}(-X) Y(X)=0.$$

The $Q_n(X),R_n(X)$ and the differential equations satisfied by the $T_{\frak S_n,v}(X)$ (after simplifying $X$) are shown next For $n$ from $2$ to $6$:\newline

For $n=2$, $$Q_2(X)=1,R_2(X)=0$$

$X Y''+(X+1) Y'- Y=0$\newline

For $n=3$, $$Q_3(X)=2,R_3(X)=\frac{-1}{2}\,X$$

$\frac{1}{2} (X+8) X Y''+\frac{1}{2} \left(X^2+8 X+8\right) Y'-(X+6) Y=0$\newline

For $n=4$, $$Q_4(X)=5-\frac{1}{2}\,X,R_4(X)=\frac{-3}{2}\,X+\frac{1}{6}\,{X}^{2}$$

$\frac{1}{3} \left(X^2+17 X+75\right) X Y''+\frac{1}{3} \left(X^3+16 X^2+75 X+75\right) Y'-\left(X^2+14 X+55\right) Y=0$\newline

For $n=5$, $$Q_5(X)=16-4\,X+\frac{1}{3}\,{X}^{2},R_5(X)={\frac {-23}{4}}\,X+\frac{4}{3}\,{X}^{2}-\frac{1}{12}\,{X}^{3}$$

$\frac{1}{12} (5 X^3+130 X^2+1073 X+3072) X Y''+\frac{1}{12} (5 X^4+120 X^3+943 X^2+3072 X+3072) Y'-$

$\frac{1}{3} (5 X^3+110 X^2+789 X+2160) Y=0$\newline

For $n=6$, $$Q_6(X)=61-{\frac {99}{4}}\,X+{\frac {25}{6}}\,{X}^{2}-{\frac {5}{24}}\,{X}^{3
}
,R_6(X)=-{\frac {95}{4}}\,X+{\frac {497}{60}}\,{X}^{2}-{\frac {25}{24}}\,{X}^{
3}+\frac{1}{24}\,{X}^{4}
$$

$\frac{1}{15} (10 X^4+370 X^3+4763 X^2+25957 X+55815) X Y''+$

$\frac{1}{15} (10 X^5+340 X^4+4023 X^3+21194 X^2+55815 X+55815) Y'-$

$\frac{1}{3} (10 X^4+320 X^3+3593 X^2+17778 X+38613) Y=0$\newline

We can see that the situation of the differential equation associated to the staircase digraph is completely different to the ones obtained from dispositional digraphs with two lines, for which the companion polynomial can be decomposed in terms of a few number of derivatives of Laguerre polynomials and, in consequence, the degrees of the coefficients of $Y^{\prime\prime},Y^{\prime}$ and $Y$ are low.

\section{Non-strict dispositions in digraphs}\label{nsdd}

In this section we will consider dispositions and counters in which the inequalities need not to be strict. In this case the transitive clausure of the digraphs needs not to define an order relation in order the counter to be non-zero, and we have an additional parameter indicating the cardinality of the set of assigned values.

\begin{definition} Let $\frak G=(V,A)$ be a directed graph, and let $i$ be a natural number. A non-strict disposition of size $i$ in $\frak G$ is a mapping $f:V\longrightarrow\{1,\dots,i\}$ such that $f(v_1)\geq f(v_2)$ whenever $(v_1,v_2)\in A$.
\end{definition}

If $\frak G$ and $i$ are as above, then the set of non-strict dispositions of size $i$ will be denoted by $\Sigma^{ns}_i(\frak G)$, and its cardinality by $\sigma^{ns}_i(\frak G)$. When $i$ is the order of the digraph we will write just $\Sigma^{ns}(\frak G)$ and $\sigma^{ns}(\frak G)$. The number $\sigma^{ns}(\frak G)$ will be called the non-strict counter of $\frak G$.

We will call non-strict Young tableaux over the digraph $\frak G$ to the triplet $$(\frak G,\Sigma^{ns}(\frak G),\sigma^{ns}(\frak G)).$$

\begin{theorem}\label{ns1} Let $\frak G$ be a digraph and let $i\in\Bbb N$. If $\frak R$ is the equivalence relation given in Proposition \ref{eredcw} and $\overline{\frak G}$ is the quotient digraph $\frak G/\frak R$, then $$\sigma^{ns}_i(\frak G)=\sigma^{ns}_i(\frak{\overline G})$$
\end{theorem}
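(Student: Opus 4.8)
The plan is to exhibit an explicit bijection between $\Sigma^{ns}_i(\frak G)$ and $\Sigma^{ns}_i(\overline{\frak G})$, whence equality of the cardinalities is immediate. The crucial observation, which I would isolate as the first step, is that every non-strict disposition $f$ of $\frak G$ is constant on each equivalence class of $\frak R$. Indeed, if $u\,\frak R\,v$, then by the definition of $\frak R$ in Proposition \ref{eredcw} the vertices $u$ and $v$ lie on a common directed closed walk $w_1,\dots,w_m$ with arcs $(w_1,w_2),\dots,(w_{m-1},w_m),(w_m,w_1)$. Applying the defining inequality of a non-strict disposition to each arc yields the cyclic chain $f(w_1)\geq f(w_2)\geq\dots\geq f(w_m)\geq f(w_1)$, which forces $f(w_1)=\dots=f(w_m)$ and in particular $f(u)=f(v)$.

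Granting this, I would define the forward map $f\mapsto\overline f$ by $\overline f(\overline v)=f(v)$, which is well defined precisely because $f$ is constant on classes. To check that $\overline f$ is a non-strict disposition of $\overline{\frak G}$, I take any arc $(\overline u,\overline v)\in\overline A$; by Definition \ref{wclw} it arises from some arc $(u',v')\in A$ with $u'\in\overline u$ and $v'\in\overline v$, so $\overline f(\overline u)=f(u')\geq f(v')=\overline f(\overline v)$. In the reverse direction I would send a non-strict disposition $g$ of $\overline{\frak G}$ to its pullback $g\circ\pi$, where $\pi:V\longrightarrow\overline V$ is the canonical projection $v\mapsto\overline v$. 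For any arc $(u,v)\in A$, either $\overline u=\overline v$, in which case $(g\circ\pi)(u)=(g\circ\pi)(v)$, or $\overline u\not=\overline v$, in which case $(\overline u,\overline v)\in\overline A$ and the inequality for $g$ transfers; either way $g\circ\pi$ is a non-strict disposition of $\frak G$. These two assignments are mutually inverse: pulling back $\overline f$ recovers $f$ because $f$ is constant on classes, and pushing forward $g\circ\pi$ recovers $g$ since $\pi$ is surjective. This furnishes the required bijection.

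I do not expect a serious obstacle, as the argument reduces to bookkeeping once the constancy-on-classes lemma is established. The one point demanding care is the treatment of arcs of $\frak G$ that collapse to loops in the quotient, that is, arcs $(u,v)$ with $\overline u=\overline v$: such arcs do not appear in $\overline A$ and hence impose no constraint on dispositions of $\overline{\frak G}$, yet their constraint in $\frak G$ is automatically satisfied because any admissible $f$ is forced to be constant on each class. Confirming that this asymmetry breaks neither direction of the bijection is the heart of the matter, and it is exactly what the constancy lemma guarantees. The collapsing of parallel arcs to a single arc under the quotient is harmless, since multiplicity never affects the inequality constraints that define a non-strict disposition.
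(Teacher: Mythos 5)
Your proof is correct and follows essentially the same route as the paper: both establish that every non-strict disposition of $\frak G$ is constant on the $\frak R$-classes via the cyclic chain of inequalities along a directed closed walk, and then exhibit the pushforward $f\mapsto\overline f$ and the pullback $g\mapsto g\circ\pi$ as mutually inverse bijections between $\Sigma^{ns}_i(\frak G)$ and $\Sigma^{ns}_i(\overline{\frak G})$. If anything, you are slightly more careful than the paper at one point: you explicitly treat arcs $(u,v)\in A$ with $\overline u=\overline v$, which collapse to loops and are deleted from $\overline A$, whereas the paper's reverse direction asserts $(\overline u,\overline v)\in\overline A$ without noting this case --- your observation that the constancy-on-classes property makes such constraints automatic closes that small gap.
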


\begin{proof} If $u,v\in V$ and $u,v$ are in a closed walk $z_1,z_2,\dots,z_n$, and if $f:V\longrightarrow\{1,\dots ,i\}$ is a non-strict disposition of size $i$ in $\frak G$, then $$f(z_1)\geq f(z_j)\geq f(z_1)\ \forall j\in\{1,\dots,n\},$$ and therefore $f(u)=f(v)$. So, the mapping $\overline{f}:\overline{V}\longrightarrow\{1,\dots ,i\}$ defined by $\overline{f}(\overline{v})=f(v)$ is well-defined, because the value $f(v)$ does not depends on the representant in the equivalence class, and it is a non-strict disposition of size $i$ in $\overline{\frak G}$.

Reciprocally, if $$\overline{f}:\overline{V}\longrightarrow\{1,\dots ,i\}$$ is a non-strict disposition of size $i$ in $\overline{\frak G}$, then we will prove that the mapping $$f:V\longrightarrow\{1,\dots ,i\}$$ defined by $f(v)=\overline{f}(\overline{v})$ is a non-strict disposition of size $i$ in $\frak G$. If $(u,v)\in A$, then $(\overline{u},\overline{v})\in\overline{A}$ by definition of quotient digraph, and $\overline{f}(\overline{u})\geq\overline{f}(\overline{v})$ because $\overline{f}$ is a disposition, and hence $f(u)\geq f(v)$.

Now, it is easy to prove that, if $\frak D$ and $\overline{\frak D}$ are the sets of non-strict dispositions of size $i$ in $\frak G$ and $\overline{\frak G}$, respectively, then the mappings $\delta_1:\frak D\longrightarrow\overline{\frak D}$ and $\delta_2:\overline{\frak D}\longrightarrow\frak D$ defined by $\delta_1(f)=\overline{f}$ and $\delta_2(\overline{f})=f$, respectively, are inverses one of each other.
\end{proof}

Hence, we can assume without loosing generality that the digraph has not directed cycles.

\begin{theorem}\label{nsbis} If $\frak G$ has not directed cycles, then $$\sigma^{ns}_i(\frak G)=\sum_{1\leq j\leq i}\ \ \sum_{\emptyset\subsetneq S\subseteq\text{min}(\frak G)}(-1)^{\vert S\vert+1}\sigma^{ns}_j(\frak G-S),$$ and $$\sigma^{ns}_i(\frak G)=\sum_{1\leq j\leq i}\ \ \sum_{\emptyset\subsetneq S\subseteq\text{max}(\frak G)}(-1)^{\vert S\vert+1}\sigma^{ns}_j(\frak G-S).$$
\end{theorem}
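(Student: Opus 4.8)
The plan is to establish a one-step recurrence in the size parameter $i$ and then telescope it. Throughout, write $m=\min(\frak G)$ for the set of sinks, and recall that, since $\frak G$ is acyclic, $m\neq\emptyset$ whenever $V\neq\emptyset$.

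First I would record two elementary facts. \emph{(a)} If $S\subseteq m$ is any set of sinks, then a non-strict disposition $f$ of size $i$ satisfies $f|_S\equiv 1$ if and only if its restriction to $V\setminus S$ is a non-strict disposition of $\frak G-S$ of size $i$; indeed, a sink has no out-arcs, and every in-arc $(w,u)$ with $u\in S$ imposes only $f(w)\ge f(u)=1$, which holds automatically, while conversely extending any non-strict disposition of $\frak G-S$ by the value $1$ on $S$ respects all arcs. Hence $\#\{f : f|_S\equiv 1\}=\sigma^{ns}_i(\frak G-S)$. \emph{(b)} The level set $f^{-1}(1)$ is closed under out-arcs: if $f(v)=1$ and $(v,w)\in A$ then $1=f(v)\ge f(w)\ge 1$. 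Because $\frak G$ is acyclic, following out-arcs from any vertex of $f^{-1}(1)$ terminates at a sink, which therefore also lies in $f^{-1}(1)$. Consequently $f$ omits the value $1$ if and only if no sink takes the value $1$, and in that case $f$ takes values in $\{2,\dots,i\}$; subtracting $1$ gives a bijection with the non-strict dispositions of size $i-1$, so $\#\{f : 1\notin\operatorname{im}(f)\}=\sigma^{ns}_{i-1}(\frak G)$.

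Next I would split the non-strict dispositions of size $i$ according to whether some sink attains the value $1$. Fact (b) handles the complementary class. For the class in which at least one sink attains $1$, apply inclusion--exclusion over the events $B_u=\{f:f(u)=1\}$ for $u\in m$, using fact (a) to evaluate each intersection $\bigcap_{u\in S}B_u$. This yields the recurrence
$$\sigma^{ns}_i(\frak G)=\sigma^{ns}_{i-1}(\frak G)+\sum_{\emptyset\subsetneq S\subseteq m}(-1)^{|S|+1}\sigma^{ns}_i(\frak G-S).$$
Writing this as $\sigma^{ns}_i(\frak G)-\sigma^{ns}_{i-1}(\frak G)=\sum_{\emptyset\subsetneq S\subseteq m}(-1)^{|S|+1}\sigma^{ns}_i(\frak G-S)$ and adding these equations for $i$ replaced by $1,2,\dots,i$, with $\sigma^{ns}_0(\frak G)=0$ (there is no map from a nonempty set into $\{1,\dots,0\}=\emptyset$), the left side telescopes to $\sigma^{ns}_i(\frak G)$ and the right side becomes exactly the first claimed double sum.

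Finally, the second identity follows by symmetry. I would first note that $f\mapsto i+1-f$ is a bijection between non-strict dispositions of size $i$ of $\frak G$ and of its reverse $\frak G^r$, so $\sigma^{ns}_i(\frak G)=\sigma^{ns}_i(\frak G^r)$, in the spirit of Proposition \ref{countreverse}. Moreover $\frak G^r$ is again acyclic, $\min(\frak G^r)=\max(\frak G)$, and $(\frak G-S)^r=\frak G^r-S$; applying the first identity to $\frak G^r$ then yields the version with $\max$. The routine parts here are the inclusion--exclusion and the telescoping; the step that genuinely uses acyclicity, and hence the one to get right, is fact (b), namely that a value of $1$ can always be pushed down along out-arcs to a sink. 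This is precisely what forces the summation to range only over subsets of $\min(\frak G)$ rather than over arbitrary vertex sets.
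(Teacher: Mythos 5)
Your proof is correct and takes essentially the paper's approach: the paper dismisses this theorem with the single remark that it is an easy consequence of the inclusion--exclusion principle, and your argument --- inclusion--exclusion over the sinks attaining the value $1$ (fact (a)), the acyclicity step pushing a value $1$ down along out-arcs to a sink (fact (b), which yields the $\sigma^{ns}_{i-1}(\frak G)$ term), and the telescoping in $j$ --- is a complete and accurate elaboration of exactly that route, including the correct reduction of the $\max$ version via the reversal bijection. The only implicit convention to note, consistent with the paper's own remark following Theorem~\ref{nstri}, is that $\sigma^{ns}_j$ of the order-$0$ digraph is taken to be $1$ when $S$ exhausts the vertex set.
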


\begin{proof} It is an easy consequence of the inclusion-exclusion principle.
\end{proof}

The formulas in the previous theorem are the non-strict versions of the ones in Theorem \ref{rec}, and provide us the recurrence relations with which non-strict counters can be found.

\begin{theorem}\label{nstri} $$\sigma^{ns}_i(K_1)=i.$$
\end{theorem}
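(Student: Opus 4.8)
The plan is to read off the non-strict counter of $K_1$ directly from the definition, since $K_1$ is the digraph consisting of a single vertex $v$ with empty arc set. A non-strict disposition of size $i$ is a map $f$ from the vertex set $\{v\}$ into $\{1,\dots,i\}$ subject to the inequality constraints imposed by the arcs; as $K_1$ has no arcs, the defining condition ``$f(v_1)\ge f(v_2)$ whenever $(v_1,v_2)\in A$'' quantifies over an empty set and is therefore vacuously satisfied. Hence every map $f\colon\{v\}\to\{1,\dots,i\}$ qualifies, and there are exactly $i$ of them, one for each admissible value of $f(v)$. This gives $\sigma^{ns}_i(K_1)=i$ immediately, and the only point requiring a moment's care is the observation that an isolated vertex carries no hidden constraint.

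As a consistency check I would also recover the identity from the recurrence in Theorem~\ref{nsbis}. For $K_1$ one has $\min(K_1)=\{v\}$, so the only nonempty subset $S\subseteq\min(K_1)$ is $S=\{v\}$, which contributes the sign $(-1)^{\vert S\vert+1}=1$; moreover $K_1-\{v\}$ is the void digraph, for which the empty map is the unique non-strict disposition of any size, so that $\sigma^{ns}_j(K_1-\{v\})=1$ for all $j$. The recurrence then collapses to $\sigma^{ns}_i(K_1)=\sum_{1\le j\le i}1=i$, in agreement with the direct count.

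There is essentially no obstacle here, since this is a base case whose purpose is to seed the recurrences of the non-strict theory. The only subtlety worth flagging is the bookkeeping convention that the void digraph admits a single (empty) non-strict disposition regardless of the size parameter $j$; this is the same kind of convention as $\sigma(\frak S_0)=1$ adopted earlier in the paper, and it must be in force for the recurrence route to produce the correct value.
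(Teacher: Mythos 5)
Your direct count is exactly the paper's proof: $K_1$ has one vertex and no arcs, so the $i$ maps $f_j(v)=j$ are precisely the non-strict dispositions of size $i$. Your consistency check via Theorem~\ref{nsbis} with the order-$0$ convention is also anticipated by the paper, which remarks immediately after the theorem that it follows from Theorem~\ref{nsbis} under that same convention.
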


\begin{proof} If $K_1=(\{v\},\emptyset)$, then the dispositions of size $i$ in $K_1$ are the mappings $f_j$ with $j\in\{1,\dots,i\}$ defined by $f_j(v)=j$.
\end{proof}

Theorems \ref{ns1},\ref{nsbis} and \ref{nstri} let us to calculate the number of dispositions for any digraph. Moreover, if we convey, as we did for strict dispositions, that the counter of a digraph of order $0$ is $1$, then Theorem \ref{nstri} follows from Theorem \ref{nsbis}.

\begin{proposition}\label{snspn} For any natural number $m$, $$\sigma^{ns}_i(P_n)=\binom{i+n-1}{n}=CR_{i,n}.$$
\end{proposition}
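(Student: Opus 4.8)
The plan is to combine the recurrence of Theorem~\ref{nsbis} with induction on $n$, the length of the path.

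First I would pin down the minimum points of $P_n$. Since the arcs of $P_n$ are $(v_1,v_2),\dots,(v_{n-1},v_n)$, the vertex $v_n$ is the unique one with no outgoing arc, so $\min(P_n)=\{v_n\}$, and deleting it gives $P_n-v_n=P_{n-1}$. Feeding this into the first identity of Theorem~\ref{nsbis}, the inner sum over nonempty subsets $S\subseteq\min(P_n)$ collapses to the single term $S=\{v_n\}$, whose sign is $(-1)^{1+1}=+1$. This yields the clean recurrence
$$\sigma^{ns}_i(P_n)=\sum_{1\leq j\leq i}\sigma^{ns}_j(P_{n-1}).$$

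For the base case $n=1$ we have $P_1=K_1$, and Theorem~\ref{nstri} gives $\sigma^{ns}_i(P_1)=i=\binom{i}{1}$, which is exactly $\binom{i+n-1}{n}$ for $n=1$. For the inductive step I would assume $\sigma^{ns}_j(P_{n-1})=\binom{j+n-2}{n-1}$ for all $j\geq 1$, substitute into the recurrence, and evaluate the resulting binomial sum. Reindexing by $k=j+n-2$ turns it into $\sum_{k=n-1}^{i+n-2}\binom{k}{n-1}$, and the hockey-stick identity $\sum_{k=r}^{m}\binom{k}{r}=\binom{m+1}{r+1}$ (with $r=n-1$ and $m=i+n-2$) gives $\binom{i+n-1}{n}=CR_{i,n}$, which closes the induction.

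The argument has no genuine obstacle; the only computational point is the binomial summation, which is the classical hockey-stick identity, so the ``hard part'' is merely setting up the reindexing correctly. As a sanity check one can note the direct combinatorial meaning, which furnishes an alternative proof bypassing the recurrence entirely: a non-strict disposition of size $i$ in $P_n$ is precisely a weakly decreasing sequence $f(v_1)\geq\dots\geq f(v_n)$ with values in $\{1,\dots,i\}$, and the number of such sequences is the multiset coefficient $\binom{i+n-1}{n}$, matching the claimed value $CR_{i,n}$.
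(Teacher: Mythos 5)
Your proof is correct, but it takes a genuinely different route from the paper. The paper disposes of this proposition in one line: a non-strict disposition of size $i$ in $P_n$ is just a weakly decreasing map $f(v_1)\geq\dots\geq f(v_n)$ into $\{1,\dots,i\}$, i.e.\ a combination with repetition, counted by the classical stars-and-bars (separators) argument as $\binom{i+n-1}{n}$ --- which is exactly the ``sanity check'' you append at the end of your writeup. Your main argument instead runs the recurrence machinery: you correctly identify $\min(P_n)=\{v_n\}$ so that the inclusion--exclusion of Theorem~\ref{nsbis} collapses to $\sigma^{ns}_i(P_n)=\sum_{1\leq j\leq i}\sigma^{ns}_j(P_{n-1})$ (the hypothesis of Theorem~\ref{nsbis} holds since $P_n$ has no directed cycles), anchor the induction at $P_1=K_1$ via Theorem~\ref{nstri}, and close it with the hockey-stick identity; the reindexing is right, since with $r=n-1$ and $m=i+n-2$ one gets $\binom{i+n-1}{n}$. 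What each approach buys: the paper's direct count is shorter and self-contained, while your inductive derivation doubles as a verification that the closed form is consistent with the recurrence of Theorem~\ref{nsbis}, which is the actual computational engine of that section --- a worthwhile cross-check, at the cost of needing the binomial summation. Both are valid; had you led with the bijective count, you would have reproduced the paper's proof verbatim.
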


\begin{proof} It is a classic problem in combinatorics. Beginning with a set with $n$ elements, we put $i-1$ separators so that the first separator separates the $1$s and $2$s, and so on.
\end{proof}

\begin{example} If we consider the digraph $\frak G$ in the following figure then by Theorem \ref{ns1}, $\sigma^{ns}_i(\frak G)=\sigma^{ns}_i(P_3)$, and by Proposition \ref{snspn}, this equals $\binom{i+2}{3}$.
\end{example}

\begin{figure}[H]
\caption{Directed path with directed cycle}
\includegraphics[width=5 in]{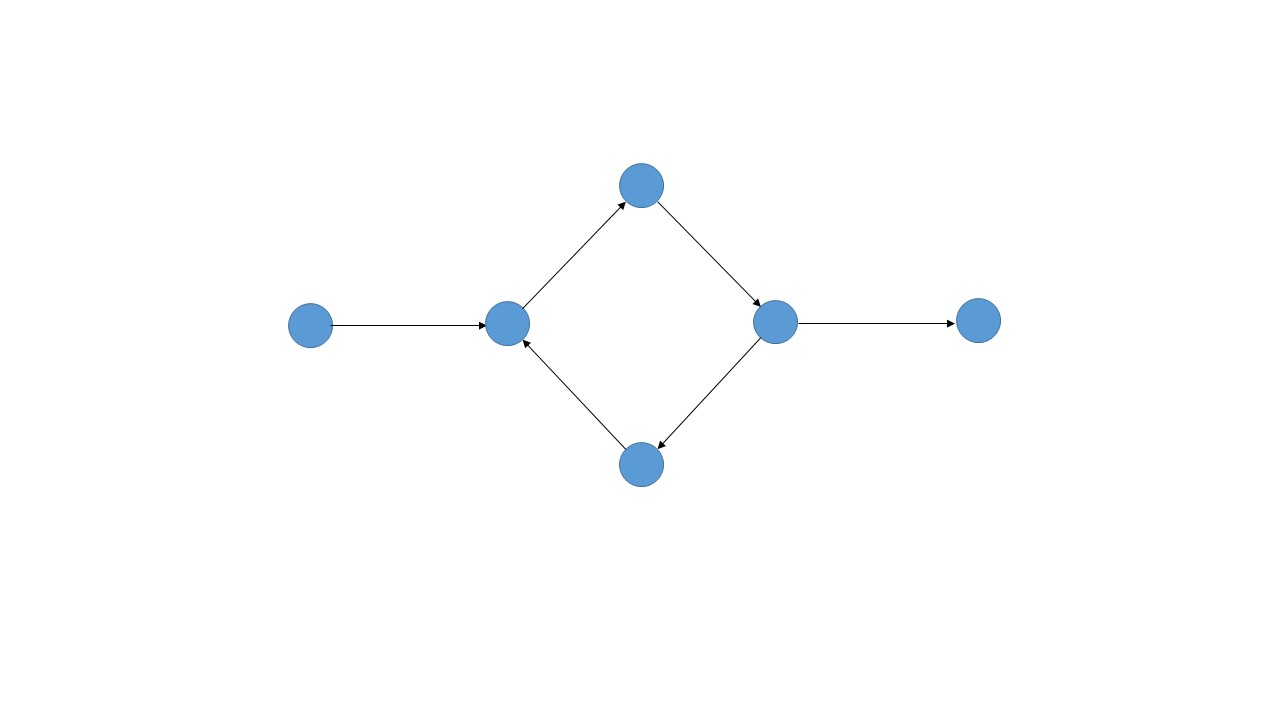}
\end{figure}

The following theorem is the non-strict analogous of Theorem \ref{contcc}. Its proof follows the line of that theorem, and will be omitted.

\begin{theorem}\label{nsg1gr} If $\frak G=(V,A)$ is a digraph of order $n$ and $i\in\Bbb N$, and $$\frak G_1=(V_1,A_1),\dots,\frak G_r=(V_r,A_r)$$ are the connected components of orders $n_1,\dots,n_r$, respectively, of the underlying undirected graph $\frak U(\frak G)$, then $$\sigma^{ns}_i(\frak G)=\sigma^{ns}_i(\frak G_1)\cdots \sigma^{ns}_i(\frak G_r).$$
\end{theorem}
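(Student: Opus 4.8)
The plan is to follow the same strategy as the proof of Theorem \ref{contcc}, while observing that the non-strict setting is actually simpler because no value set needs to be partitioned. First I would note that, since $\frak G_1,\dots,\frak G_r$ are the connected components of the underlying undirected graph $\frak U(\frak G)$, every arc of $\frak G$ joins two vertices lying in the same $V_j$; there are no arcs between distinct components. Consequently, for a mapping $f:V\longrightarrow\{1,\dots,i\}$, the defining condition $f(v_1)\geq f(v_2)$ for all $(v_1,v_2)\in A$ decouples into $r$ independent conditions, one for each component.

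Next I would make this precise as a bijection. Given $f\in\Sigma^{ns}_i(\frak G)$, its restriction $f_j=f|_{V_j}$ satisfies $f_j(v_1)\geq f_j(v_2)$ for every arc $(v_1,v_2)\in A_j$, so $f_j\in\Sigma^{ns}_i(\frak G_j)$; this assigns to $f$ the tuple $(f_1,\dots,f_r)$. Conversely, given $f_j\in\Sigma^{ns}_i(\frak G_j)$ for each $j$, I would glue them into a single map $f:V\longrightarrow\{1,\dots,i\}$ by setting $f(v)=f_j(v)$ for the unique $j$ with $v\in V_j$; since every arc lies inside some component, $f$ satisfies all the required inequalities and hence $f\in\Sigma^{ns}_i(\frak G)$. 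These two constructions are clearly mutually inverse, so they establish a bijection
$$\Sigma^{ns}_i(\frak G)\longleftrightarrow\Sigma^{ns}_i(\frak G_1)\times\dots\times\Sigma^{ns}_i(\frak G_r).$$
Taking cardinalities then yields $\sigma^{ns}_i(\frak G)=\sigma^{ns}_i(\frak G_1)\cdots\sigma^{ns}_i(\frak G_r)$.

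The only point that really deserves attention — and the one I would emphasize to explain why this differs from Theorem \ref{contcc} — is the absence of any multinomial coefficient. In the strict case a disposition is a bijection onto $\{1,\dots,n\}$, so the values used must be distributed among the components, producing the factor $\binom{n_1+\dots+n_r}{n_1,\dots,n_r}$. In the non-strict case $f$ is an arbitrary map into $\{1,\dots,i\}$ and not a bijection, so each component may freely reuse the entire value set $\{1,\dots,i\}$; there is nothing to partition, and the components contribute completely independently. This is the structural reason the formula is a plain product. There is no genuine obstacle in the argument; the main care needed is simply to verify that the glued map respects every arc, which is immediate once one records that no arc crosses between components.
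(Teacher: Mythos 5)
Your proof is correct and matches the paper's intent exactly: the paper omits the proof of Theorem \ref{nsg1gr}, stating only that it ``follows the line'' of Theorem \ref{contcc}, and your restriction--gluing bijection $\Sigma^{ns}_i(\frak G)\longleftrightarrow\Sigma^{ns}_i(\frak G_1)\times\dots\times\Sigma^{ns}_i(\frak G_r)$ is precisely that adaptation. Your closing remark correctly identifies why the multinomial coefficient of the strict case disappears, namely that $f$ need not be a bijection, so there is no value set to partition among the components.
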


\begin{corollary} For any natural numbers $n,i$, $$\sigma^{ns}_i(E_n)=i^n.$$
\end{corollary}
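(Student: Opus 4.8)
The plan is to mirror the strategy used for Corollary~\ref{counten}, replacing the multiplicative-with-multinomial behaviour of strict counters by the purely multiplicative behaviour of non-strict counters recorded in Theorem~\ref{nsg1gr}. First I would observe that the empty digraph $E_n=(\{v_1,\dots,v_n\},\emptyset)$ has an underlying undirected graph with no edges, so it decomposes into exactly $n$ connected components, each consisting of a single isolated vertex and hence isomorphic to $K_1=(\{v\},\emptyset)$.

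Next I would apply Theorem~\ref{nsg1gr} to this decomposition. Since all $n$ components equal $K_1$, each of order $1$, the theorem yields $\sigma^{ns}_i(E_n)=\sigma^{ns}_i(K_1)^n$. Note that, unlike in the strict case governed by Theorem~\ref{contcc}, no multinomial coefficient intervenes: a non-strict disposition of size $i$ draws its values from the fixed set $\{1,\dots,i\}$ on each component independently, rather than partitioning a single value set among the components. Finally I would invoke Theorem~\ref{nstri}, which gives $\sigma^{ns}_i(K_1)=i$, and substitute to conclude $\sigma^{ns}_i(E_n)=i^n$.

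I do not anticipate a genuine obstacle here, as the computation is entirely routine; the only point requiring a moment of care is the justification that the components of $E_n$ are precisely the $n$ singletons $K_1$, which follows at once from the arc set being empty, so that Theorem~\ref{nsg1gr} applies with all factors equal. One could alternatively argue directly, observing that with no arcs every map $V\longrightarrow\{1,\dots,i\}$ satisfies the (vacuous) inequality condition and is therefore a valid non-strict disposition, giving $i^n$ immediately; however, routing the proof through Theorems~\ref{nsg1gr} and~\ref{nstri} keeps the argument parallel to the strict-counter development and is the approach I would present.
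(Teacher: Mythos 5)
Your proposal is correct and follows exactly the paper's own proof: the paper likewise applies Theorem~\ref{nsg1gr} to the $n$ singleton components of $E_n$ and then invokes Theorem~\ref{nstri} to get $\sigma^{ns}_i(K_1)=i$, yielding $i^n$. Your additional remarks (the absence of a multinomial factor compared with Theorem~\ref{contcc}, and the direct vacuous-condition argument) are accurate but supplementary to the same argument.
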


\begin{proof} Use the previous theorem and Theorem \ref{nstri}.
\end{proof}

We can define the non-strict generatrix function in a similar way as we did in the strict case, both for the case when the size of the dispositions is a fixed value and when it is equal to the number of vertices.

\begin{definition} Let $S=\{\frak G_i\}_{i\in\Bbb Z^+}$, where $\Bbb Z^+=\{i\in\Bbb Z\mid i\geq 0\}$, be a sequence of digraphs such that $\frak G_i$ is a subdigraph of $\frak G_{i+1}$ for every $i$, and let $j\in\Bbb N$. We will call non-strict generatrix function of size $j$ of $S$ to $$F^{ns}_{j,S}(X)=\sum_{i=0}^{\infty}\frac{\sigma^{ns}_j(\frak G_i)}{i!}X^i.$$
\end{definition}

\begin{definition} Let $S=\{\frak G_i\}_{i\in\Bbb Z^+}$ be a sequence of digraphs such that $\frak G_i$ is a subdigraph of $\frak G_{i+1}$ for every $i$. We will call non-strict generatrix function of $S$ to $$F^{ns}_S(X)=\sum_{i=0}^{\infty}\frac{\sigma^{ns}(\frak G_i)}{i!}X^i.$$
\end{definition}

\begin{proposition} If $S=\{P_n\}_{n\in\Bbb Z^+}$ is a sequence of directed paths and $j\in\Bbb N$, then $$F^{ns}_{j,S}(X)=\exp(X)L_{j-1}(-X),$$ where $L_{j-1}$ is the corresponding Laguerre polynomial.
\end{proposition}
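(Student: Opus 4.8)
The plan is to evaluate the coefficients of $F^{ns}_{j,S}(X)$ explicitly and then recognize the resulting power series as the tabloidal form of a Laguerre polynomial established in Theorem \ref{tfp}. Since the asserted identity is an equality of formal power series (indeed of Taylor expansions of entire functions), no analytic subtleties arise, and the argument reduces to a substitution of indices.

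First I would unwind the notation: writing $S=\{P_n\}_{n\in\Bbb Z^+}$ means that the $i$-th term of the sequence is $\frak G_i=P_i$ (with $P_0$ the empty digraph, for which $\sigma^{ns}_j(P_0)=1$ by the convention that counters of order-$0$ digraphs are $1$). Hence, by Proposition \ref{snspn} applied with disposition size $j$ and path length $i$, the relevant counter is $\sigma^{ns}_j(P_i)=\binom{j+i-1}{i}$, which also covers the term $i=0$ since $\binom{j-1}{0}=1$. Substituting into the definition of the non-strict generatrix function of size $j$ yields
$$F^{ns}_{j,S}(X)=\sum_{i=0}^{\infty}\frac{\binom{j+i-1}{i}}{i!}X^i.$$

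To finish, I would invoke Theorem \ref{tfp}, the tabloidal form of $L_j(X)$, with $j$ replaced by $j-1$ and then $X$ replaced by $-X$, which gives
$$L_{j-1}(-X)=\left(\sum_{i=0}^{\infty}\frac{\binom{i+j-1}{i}}{i!}X^i\right)\exp(-X).$$
Multiplying both sides by $\exp(X)$ identifies the bracketed series with $\exp(X)L_{j-1}(-X)$, which is exactly the expression just obtained for $F^{ns}_{j,S}(X)$, completing the proof. The only point requiring care is the bookkeeping of the two index roles---the disposition size $j$ against the running index $i$, which plays the role of path length---together with the degenerate term $i=0$; once Theorem \ref{tfp} is in hand I do not expect any genuine obstacle. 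As a consistency check one notes that for $j=1$ both sides reduce to $\exp(X)$, recovering the strict-case generatrix function $F_{\{P_{i+1}\}_{i\in\Bbb Z^+}}(X)=\exp(X)$ of Proposition \ref{generatrixpath}.
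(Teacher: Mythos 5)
Your proof is correct, and its first half coincides with the paper's: both begin by applying Proposition \ref{snspn} (together with the convention $\sigma^{ns}_j(P_0)=1$ for the order-$0$ digraph) to write $F^{ns}_{j,S}(X)=\sum_{i=0}^{\infty}\binom{j+i-1}{i}\frac{X^i}{i!}$. Where you genuinely diverge is in identifying this series with $\exp(X)L_{j-1}(-X)$. The paper expands $L_{j-1}(-X)=\sum_{s=0}^{j-1}\binom{j-1}{s}\frac{X^s}{s!}$, multiplies by $\exp(X)$, and matches coefficients via an auxiliary lemma $\sum_{s\geq 0}\binom{a}{s}\binom{b}{s}=\binom{a+b}{b}$, itself deduced from the Chu--Vandermonde identity; you instead quote Theorem \ref{tfp} with $j$ replaced by $j-1$ and $X$ by $-X$ and conclude in one line. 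This is legitimate and non-circular: Theorem \ref{tfp} is established in Section \ref{deacp}, well before the non-strict material, and its proof (via $\sigma(P_{j+1}+P_{i+1})=\binom{i+j}{i}$ and Theorem \ref{rcp}) uses only strict counters; your bookkeeping of the degenerate $i=0$ term and the observation that $j\in\Bbb N$ guarantees $j-1\in\Bbb Z^+$, so that Theorem \ref{tfp} indeed applies, are exactly the points that needed checking, as is the harmless substitution $X\mapsto -X$ in an identity of entire functions. As to what each route buys: the paper's argument is self-contained at that point, at the price of proving a binomial identity on the spot; yours is shorter, needs no new identity, and exposes the combinatorial content, namely that the non-strict counter $\sigma^{ns}_j(P_i)=\binom{j+i-1}{i}$ equals the strict counter $\sigma(P_j+P_{i+1})$ of two directed paths sharing their initial vertex. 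Indeed, juxtaposing your proof with the paper's turns the Vandermonde-type lemma into a corollary of the two computations rather than an ingredient of the proof.
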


We will use the following lemma, in which, of course, the combinatorial numbers in which the lower index is greater than the upper one are taken to be $0$:

\begin{lemma} If $a,b\in\Bbb Z^+$, then $$\sum_{s=0}^{\infty}{a \choose s}{b\choose s}={a+b \choose b}$$
\end{lemma}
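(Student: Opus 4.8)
The plan is to recognize the claimed identity as an instance of the Vandermonde convolution. First I would note that, by the stated convention that a binomial coefficient with lower index exceeding its upper index is zero, the summand $\binom{a}{s}\binom{b}{s}$ vanishes as soon as $s>\min\{a,b\}$, so the formally infinite sum is in fact finite and there are no convergence issues to address.

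The key step is to exploit the symmetry of the binomial coefficient in its lower index, writing $\binom{b}{s}=\binom{b}{b-s}$ for every $s\in\{0,\dots,b\}$. This rewrites the left-hand side as $\sum_{s}\binom{a}{s}\binom{b}{b-s}$, which is precisely the coefficient-convolution form of Vandermonde's identity $\sum_{s}\binom{a}{s}\binom{b}{p-s}=\binom{a+b}{p}$ evaluated at $p=b$, yielding $\binom{a+b}{b}$ and finishing the proof.

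If a self-contained argument is preferred to quoting Vandermonde, I would instead compare the coefficient of $X^b$ on both sides of the polynomial identity $(1+X)^a(1+X)^b=(1+X)^{a+b}$. The coefficient on the right is $\binom{a+b}{b}$, whereas multiplying the two binomial expansions on the left and collecting the $X^b$ term gives $\sum_{s}\binom{a}{s}\binom{b}{b-s}$, after which the same symmetry $\binom{b}{b-s}=\binom{b}{s}$ recovers the sum in the statement. Either route is entirely routine; there is no genuine obstacle here, the only point requiring a word of care being the finiteness of the sum, which is guaranteed by the vanishing convention already fixed in the statement.
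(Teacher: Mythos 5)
Your proof is correct and follows essentially the same route as the paper, which also derives the identity from the Chu--Vandermonde convolution with $m=a$, $n=a+b$, $k=b$ (your symmetry step $\binom{b}{s}=\binom{b}{b-s}$ is exactly what makes that substitution work). The alternative coefficient-extraction argument via $(1+X)^a(1+X)^b=(1+X)^{a+b}$ is a standard self-contained variant of the same computation, so there is nothing further to reconcile.
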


\begin{proof} It is an immediate consequence of Chu-Vandermonde identity, which states that $$\sum_{s=0}^k {m \choose s} {n-m \choose k-s}={n \choose k},$$ by taking $m=a,n=a+b,k=b$.
\end{proof}

Now we will prove the proposition:

\begin{proof} On the one hand, by Proposition \ref{snspn}, $$F^{ns}_{j,S}(X)=\sum_{i=0}^{\infty}\frac{{j+i-1 \choose i}}{i!} X^i$$ On the other hand, $$\exp(X)L_{j-1}(-X)=\sum_{i=0}^{\infty}(\sum_{s=0}^{j-1}\frac{{j-1 \choose s}}{s!(i-s)!}) X^i,$$ and the result follows from the previous lemma.
\end{proof}

\begin{proposition}\label{mbffk} If $S=\{P_n\}_{n\in\Bbb Z^+}$ is a sequence of directed paths, then $$F^{ns}_S(X)=\frac{1}{2}(1+\exp(2X)I_{0}(2X)),$$ where $I_0$ is the modified Bessel function of the first kind.
\end{proposition}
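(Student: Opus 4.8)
The plan is to reduce the whole statement to a single finite binomial identity, after first pinning down the coefficients. Since $\frak G_i=P_i$ has order $i$, the non-strict counter $\sigma^{ns}(P_i)$ means $\sigma^{ns}_i(P_i)$, the number of non-strict dispositions whose size equals the order. By Proposition \ref{snspn} this is $\binom{i+i-1}{i}=\binom{2i-1}{i}$, and the convention $\binom{-1}{0}=1$ matches the order-$0$ value $\sigma^{ns}(P_0)=1$. Hence I would write
$$F^{ns}_S(X)=\sum_{i=0}^{\infty}\frac{\binom{2i-1}{i}}{i!}X^i.$$

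Next I would transform the target into an exponential generating function identity. Using $2\binom{2i-1}{i}=\binom{2i}{i}$ for $i\geq 1$ and checking the constant term separately, one gets $2F^{ns}_S(X)-1=\sum_{i\geq 0}\frac{\binom{2i}{i}}{i!}X^i$, so it suffices to prove
$$\sum_{i=0}^{\infty}\binom{2i}{i}\frac{X^i}{i!}=\exp(2X)\,I_0(2X).$$
I would expand the right-hand side as a Cauchy product of the Maclaurin series $\exp(2X)=\sum_m \frac{2^m}{m!}X^m$ and $I_0(2X)=\sum_k \frac{1}{(k!)^2}X^{2k}$, extract the coefficient of $X^i$, and multiply through by $i!$. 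This turns the proposition into the purely combinatorial identity
$$\binom{2i}{i}=\sum_{k=0}^{\lfloor i/2\rfloor} 2^{\,i-2k}\binom{i}{2k}\binom{2k}{k}.$$

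Finally I would prove this identity by a pairing argument: partition $\{1,\dots,2i\}$ into the $i$ pairs $\{2j-1,2j\}$ and classify an $i$-element subset $S$ by how many pairs it contains fully. If $k$ pairs lie entirely in $S$, then exactly $k$ pairs miss $S$ and $i-2k$ pairs meet $S$ in a single element; choosing the types of the pairs contributes the multinomial $\frac{i!}{(k!)^2(i-2k)!}=\binom{i}{2k}\binom{2k}{k}$, and selecting which element of each single-element pair lies in $S$ contributes the factor $2^{\,i-2k}$. Summing over $k$ gives exactly $\binom{2i}{i}$, and reinserting this into the generating function yields the claimed closed form. The combinatorial core is routine; the only real care needed is bookkeeping, namely keeping the Cauchy-product indexing straight, recalling the defining series of $I_0$, and handling the $i=0$ boundary term consistently so that the constant terms on both sides agree.
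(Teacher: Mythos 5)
Your proposal is correct, and its skeleton coincides with the paper's: both evaluate $\sigma^{ns}(P_i)=\binom{2i-1}{i}$ via Proposition~\ref{snspn} (with the order-$0$ convention giving constant term $1$), expand $\exp(2X)I_0(2X)$ as a Cauchy product of $\sum_r \frac{(2X)^r}{r!}$ and $\sum_k \frac{X^{2k}}{(k!)^2}$, and reduce the whole statement to the finite identity $\binom{2i}{i}=\sum_{k}2^{i-2k}\binom{i}{2k}\binom{2k}{k}$ --- the paper writes the summand as $\binom{i}{k}\binom{i-k}{k}2^{i-2k}$, which is the same number $\frac{i!}{(k!)^2(i-2k)!}2^{i-2k}$. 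Where you genuinely diverge is in how this identity is established: the paper proves it algebraically, expanding $(1+X)^{2i}=\bigl(1+(2X+X^2)\bigr)^i$ with the binomial theorem, equating the coefficient of $X^i$, and then halving via $\binom{2i}{i}=2\binom{2i-1}{i}$; you instead count the $i$-element subsets of $\{1,\dots,2i\}$ bijectively, classifying a subset by the number $k$ of pairs $\{2j-1,2j\}$ it contains fully (your bookkeeping checks out: $k$ full pairs, $k$ missed pairs, $i-2k$ singly-met pairs, giving $\binom{i}{2k}\binom{2k}{k}2^{i-2k}$ subsets of each type). Your up-front normalization $2F^{ns}_S(X)-1=\sum_{i\geq 0}\binom{2i}{i}X^i/i!$ also disposes of the boundary term once, whereas the paper carries the $i=0$ case split inside its lemma. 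The trade-off is minor: the paper's double expansion is shorter and purely mechanical, while your pairing argument explains combinatorially why the identity holds; both proofs are complete.
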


We will need the next lemma:

\begin{lemma} If $i\in\Bbb Z^+$, then $$\frac{1}{2}\sum_{s=0}^{\infty}\frac{2^{i-2s}}{(i-2s)!(s!)^2}=\begin{cases}\frac{1}{2} & \text{if }i=0,\\ \frac{{2i-1 \choose i}}{i!} & \text{otherwise}\end{cases}$$
\end{lemma}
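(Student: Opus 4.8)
The plan is to prove the identity directly by first dispatching the trivial case $i=0$ and then, for $i\geq 1$, reducing it to the classical central-binomial identity
$$\sum_{s\geq 0}\frac{i!}{s!\,s!\,(i-2s)!}\,2^{i-2s}=\binom{2i}{i},$$
which I will establish by extracting a constant term from a suitable power of a Laurent polynomial. First I would record that, with the standing convention that any term containing the factorial of a negative integer vanishes, the sum in the statement is finite, running over $0\le s\le\lfloor i/2\rfloor$. For $i=0$ only the index $s=0$ survives and the sum equals $1$, so the left-hand side is $\tfrac12$, exactly as claimed.

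For $i\geq 1$ I would multiply the desired equality $\tfrac12\sum_s \frac{2^{i-2s}}{(i-2s)!\,(s!)^2}=\frac{\binom{2i-1}{i}}{i!}$ through by $i!$ and rewrite the general coefficient using $\frac{i!}{s!\,s!\,(i-2s)!}=\binom{i}{s}\binom{i-s}{s}$. The claim then becomes equivalent to $\sum_s \frac{i!}{s!\,s!\,(i-2s)!}\,2^{i-2s}=2\binom{2i-1}{i}$. Since $2\binom{2i-1}{i}=\frac{2\,(2i-1)!}{i!\,(i-1)!}=\frac{(2i)!}{i!\,i!}=\binom{2i}{i}$, it suffices to prove the displayed central-binomial identity.

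To prove that identity I would work in the ring of Laurent polynomials in a single variable $t$. Expanding the $i$-th power of the trinomial $t+2+t^{-1}$ by the multinomial theorem gives
$$(t+2+t^{-1})^i=\sum_{a+b+c=i}\frac{i!}{a!\,b!\,c!}\,2^{b}\,t^{a-c},$$
whose constant term (the coefficient of $t^0$) collects precisely the contributions with $a=c=s$ and $b=i-2s$, namely $\sum_s \frac{i!}{s!\,s!\,(i-2s)!}\,2^{i-2s}$, which is the left-hand side of the identity. On the other hand $t+2+t^{-1}=(t^{1/2}+t^{-1/2})^2$, so
$$(t+2+t^{-1})^i=(t^{1/2}+t^{-1/2})^{2i}=\sum_{k=0}^{2i}\binom{2i}{k}\,t^{k-i},$$
whose constant term is the $k=i$ summand, $\binom{2i}{i}$. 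Comparing the two evaluations of the constant term of one and the same Laurent polynomial yields the identity, and hence the lemma.

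I do not expect a serious obstacle here: the argument is a short constant-term extraction. The only points requiring care are the bookkeeping of the vanishing-factorial convention (so that the sum is genuinely finite and the index substitution $b=i-2s\geq 0$ is respected), the routine elementary relation $2\binom{2i-1}{i}=\binom{2i}{i}$ used in the reduction, and the separate treatment of $i=0$, where the constant $\tfrac12$ rather than the binomial formula is the correct value.
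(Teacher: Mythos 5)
Your proof is correct and takes essentially the same route as the paper: extracting the constant term of $(t+2+t^{-1})^i=t^{-i}(1+t)^{2i}$ is precisely the paper's comparison of the coefficient of $X^i$ in the two expansions $(1+X)^{2i}=\bigl(1+(2X+X^2)\bigr)^i$, merely rewritten in a Laurent variable. Both arguments then finish with the same reduction $\binom{2i}{i}=2\binom{2i-1}{i}$ and the same separate treatment of $i=0$.
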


\begin{proof} It is obvious if $i=0$. Let us suppose that $i>0$. We have that \begin{equation}\label{opx2n1}(1+X)^{2i}=\sum_{k=0}^{2i} {2i \choose k} X^k,\end{equation} and also $$(1+X)^{2i}=(1+(2X+X^2))^i=\sum_{s_1=0}^i {i \choose s_1} (2X+X^2)^{s_1}$$ and, using the binomial theorem we obtain that the previous expression equals \begin{equation}\label{opx2n2}\sum_{s_1=0}^i\sum_{s_2=0}^{s_1} {i \choose s_1}{s_1 \choose s_2} 2^{s_1-s_2} X^{s_1+s_2}\end{equation} Be equating the coefficient of $X^i$ in (\ref{opx2n1}) and in (\ref{opx2n2}) and rearranging the indexes we get $${2i \choose i}=\sum_{s_2=0}^i {i \choose s_2}{i-s_2 \choose s_2} 2^{i-2s_2}$$ Since ${2i \choose i}=2{2i-1 \choose i}$, we conclude that $${2i-1 \choose i}=\sum_{s_2=0}^i {i \choose s_2}{i-s_2 \choose s_2} 2^{i-2s_2-1},$$ and the result follows trivially.
\end{proof}

Now we will prove the proposition:

\begin{proof} By Proposition \ref{snspn}, $$F^{ns}_S(X)=\sum_{i=0}^{\infty} \frac{{2i-1 \choose i}}{i!} X^i$$ On the other hand, $$\exp(2X)I_{0}(2X)=(\sum_{r=0}^{\infty} \frac{(2X)^r}{r!})(\sum_{s=0}^{\infty}(\frac{X^s}{s!})^2)=\sum_{i=0}^{\infty}(\sum_{s=0}^{\infty} \frac{2^{i-2s}}{(i-2s)!(s!)^2}) X^i,$$ and the result follows from the previous lemma.
\end{proof}

Next we will analyze the non-strict counters of some dispositional digraphs with two rows. Specifically, we will consider the same dispositional digraphs  $$\frak G_{n_1,n_2}=\mathcal G([n_2,0],[n_1,0])$$ with $n_1\leq n_2$ that we studied in Section \ref{disptwolines}. We will denote again the vertices by $u_1,\dots,u_{n_1},v_1,\dots,v_{n_2}$.

The following proposition is immediate:

\begin{proposition}\label{minnstr} $$\min (\frak G_{n_1,n_2})=\begin{cases} \{u_{n_1},v_{n_2}\} & \text{ if }n_1<n_2 \\ \{u_{n_1}\} & \text{ if }n_1=n_2\end{cases}$$
\end{proposition}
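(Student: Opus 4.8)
The plan is to read off $\min(\frak G_{n_1,n_2})$ directly from its arc set, using that, by definition, a minimum point is exactly a vertex from which no arc departs. First I would record, straight from the definition of the dispositional digraph, which arcs leave each vertex. The horizontal arcs along the two rows are $u_i\to u_{i+1}$ for $1\le i\le n_1-1$ and $v_j\to v_{j+1}$ for $1\le j\le n_2-1$, and in addition there is one vertical arc in each column occupied by both rows, namely $v_i\to u_i$ for $1\le i\le n_1$; the columns $n_1+1,\dots,n_2$ lie only in the longer row and therefore carry no vertical arc. (This is the same arc structure already used implicitly in the proof of Theorem~\ref{recrelcp}, where $\min(\frak G_{n_1,n_2})-\{v\}$ is evaluated.)

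With this list in hand I would simply scan the two rows. The only arc that can leave $u_i$ is the horizontal arc $u_i\to u_{i+1}$, which is present precisely when $i<n_1$; hence $u_{n_1}$ is the unique minimum point in the $u$-row, and this holds regardless of how $n_1$ compares with $n_2$. The vertex $v_j$, on the other hand, has two candidate outgoing arcs: the horizontal $v_j\to v_{j+1}$, present when $j<n_2$, and the vertical $v_j\to u_j$, present when $j\le n_1$. Therefore $v_j$ is a minimum point exactly when both are absent, i.e. when $j=n_2$ and simultaneously $n_2>n_1$.

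Assembling these two observations yields the stated dichotomy. When $n_1<n_2$, the rightmost vertex $v_{n_2}$ of the longer row has no horizontal successor and, since its column $n_2$ exceeds $n_1$, no vertical arc either, so it joins $u_{n_1}$ and $\min(\frak G_{n_1,n_2})=\{u_{n_1},v_{n_2}\}$; when $n_1=n_2$, the vertex $v_{n_2}=v_{n_1}$ still emits the vertical arc $v_{n_1}\to u_{n_1}$ and so fails to be a minimum point, leaving $\min(\frak G_{n_1,n_2})=\{u_{n_1}\}$. The argument is purely mechanical, so I do not expect a genuine obstacle; the single point requiring care is the bookkeeping of the vertical arcs, namely that a vertical arc sits in column $j$ only for $j\le n_1$, which is precisely what distinguishes the case $n_1<n_2$ from $n_1=n_2$.
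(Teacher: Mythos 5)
Your proof is correct: the vertical arcs of $\mathcal G([n_2,0],[n_1,0])$ do run $v_i\to u_i$ for $1\le i\le n_1$ (each vertex points to its neighbour below in the next row, and the $u$-row is the second row), so the sink-check you perform identifies exactly $u_{n_1}$, plus $v_{n_2}$ precisely when $n_2>n_1$. The paper states this proposition as immediate and gives no proof, and your mechanical reading of the arc set is exactly the verification it has in mind.
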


\begin{proposition} $\sigma^{ns}_i (\frak G_{n_1,n_2})$ is $$\displaystyle\sum_{1\leq j\leq i}[\sigma^{ns}_j (\frak G_{n_1-1,n_2})+\sigma^{ns}_j (\frak G_{n_1,n_2-1})-\sigma^{ns}_j (\frak G_{n_1-1,n_2-1})]$$ when $n_1<n_2$ and is $$\displaystyle\sum_{1\leq j\leq i}\sigma^{ns}_j (\frak G_{n_1-1,n_1})$$ when $n_1=n_2$, with the initial conditions $$\sigma^{ns}_j (\frak G_{n_1,0})=\sigma^{ns}_j (P_{n_1})=PR_{j,n_1}$$ and $$\sigma^{ns}_j (\frak G_{0,n_2})=\sigma^{ns}_j (P_{n_2})=PR_{j,n_2}$$
\end{proposition}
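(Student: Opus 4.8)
The plan is to read this recurrence off directly from the non-strict deletion formula in Theorem~\ref{nsbis}, using the explicit sink set supplied by Proposition~\ref{minnstr}. Since $\frak G_{n_1,n_2}$ is a two-row dispositional digraph whose arcs all point to the right or downward, it has no directed cycle, so Theorem~\ref{nsbis} applies and writes $\sigma^{ns}_i(\frak G_{n_1,n_2})$ as $\sum_{1\leq j\leq i}\sum_{\emptyset\subsetneq S\subseteq\min(\frak G_{n_1,n_2})}(-1)^{|S|+1}\sigma^{ns}_j(\frak G_{n_1,n_2}-S)$. Everything then reduces to enumerating the nonempty subsets $S$ of the sink set and identifying each deleted digraph $\frak G_{n_1,n_2}-S$ with a smaller member of the same family.

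First I would treat the case $n_1<n_2$, where Proposition~\ref{minnstr} gives $\min(\frak G_{n_1,n_2})=\{u_{n_1},v_{n_2}\}$. The three nonempty subsets $\{u_{n_1}\}$, $\{v_{n_2}\}$, $\{u_{n_1},v_{n_2}\}$ carry inclusion--exclusion signs $+,+,-$, and I would verify the three deletion identities
$$\frak G_{n_1,n_2}-u_{n_1}=\frak G_{n_1-1,n_2},\quad \frak G_{n_1,n_2}-v_{n_2}=\frak G_{n_1,n_2-1},\quad \frak G_{n_1,n_2}-\{u_{n_1},v_{n_2}\}=\frak G_{n_1-1,n_2-1}.$$
Each follows by noting that erasing the last vertex of a row deletes only that vertex and its incident arcs, leaving exactly the two-row digraph with that row shortened by one; the standing inequality $n_1\leq n_2$ survives in all three cases precisely because $n_1<n_2$ lets us safely shorten the longer $v$-row to $n_2-1\geq n_1$. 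Substituting these identities into the deletion formula yields the stated alternating sum.

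Next I would handle $n_1=n_2$, where Proposition~\ref{minnstr} gives the single sink $\min(\frak G_{n_1,n_1})=\{u_{n_1}\}$, so the only nonempty subset is $S=\{u_{n_1}\}$ with sign $+$; combined with $\frak G_{n_1,n_1}-u_{n_1}=\frak G_{n_1-1,n_1}$ this produces the second displayed formula. Finally, the initial conditions hold because an empty row collapses the digraph to a directed path: $\frak G_{n_1,0}$ and $\frak G_{0,n_2}$ are just $P_{n_1}$ and $P_{n_2}$, whose non-strict counters equal $\binom{j+n-1}{n}$ by Proposition~\ref{snspn}.

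The routine part is the sign bookkeeping in the inclusion--exclusion sum, which Theorem~\ref{nsbis} already packages for us. The only point demanding genuine care---the main obstacle, such as it is---is the case split itself: one must confirm that $v_{n_2}$ is a sink exactly when $n_1<n_2$ and that its deletion keeps us inside the family (that is, $n_1\leq n_2-1$), so that the two branches of the recurrence are exhaustive and no deleted digraph ever violates the hypothesis $n_1\leq n_2$.
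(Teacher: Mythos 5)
Your proposal is correct and follows exactly the paper's own route: the paper's proof is precisely ``use Theorem~\ref{nsbis} with the sink sets from Proposition~\ref{minnstr} and the path counters from Proposition~\ref{snspn},'' and your expansion of the inclusion--exclusion over the one, two, or three nonempty subsets of $\min(\frak G_{n_1,n_2})$, together with the deletion identities and the observation that an empty row reduces the digraph to a directed path, fills in the details accurately.
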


\begin{proof} Use Theorem \ref{nsbis} and Propositions \ref{snspn} and \ref{minnstr}.
\end{proof}

The following result can now be easily proved inductively by using the previous proposition, and the proof will be omitted.

\begin{proposition} If $n_1\leq n_2$, then $$\sigma^{ns}_i(\frak G_{n_1,n_2})=\frac{in_2-(n_1-1)i+n_1}{(n_2+1)i}CR_{i,n_1}CR_{i,n_2}=(1+\frac{n_1(1-i)}{(n_2+1)i})CR_{i,n_1}CR_{i,n_2}$$
\end{proposition}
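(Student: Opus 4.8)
The plan is to induct on $n_1+n_2$, using the recurrence of the preceding proposition, and to reduce the inductive step to a purely algebraic identity in the numbers $CR_{i,n}=\binom{i+n-1}{n}$. Write $F(i,n_1,n_2)$ for the proposed closed form $\frac{i(n_2-n_1+1)+n_1}{(n_2+1)i}CR_{i,n_1}CR_{i,n_2}$. The base case of the outer induction is $n_1=0$: there $\frak G_{0,n_2}=P_{n_2}$, so $\sigma^{ns}_i(\frak G_{0,n_2})=CR_{i,n_2}$ by Proposition \ref{snspn}, while $F(i,0,n_2)=\frac{i(n_2+1)}{(n_2+1)i}\cdot 1\cdot CR_{i,n_2}=CR_{i,n_2}$ since $CR_{i,0}=1$; the two agree. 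For the step I fix $n_1\geq 1$ and assume the formula for every pair with strictly smaller sum, which covers each of $\frak G_{n_1-1,n_2}$, $\frak G_{n_1,n_2-1}$ and $\frak G_{n_1-1,n_2-1}$ appearing on the right of the recurrence (all lie in the admissible region $n_1\le n_2$, using $n_1<n_2$ in the off-diagonal case).

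First I would settle the off-diagonal case $n_1<n_2$. Substituting the inductive hypothesis into the recurrence turns the claim $\sigma^{ns}_i(\frak G_{n_1,n_2})=F(i,n_1,n_2)$ into the summation identity
$$\sum_{j=1}^{i}\bigl(F(j,n_1-1,n_2)+F(j,n_1,n_2-1)-F(j,n_1-1,n_2-1)\bigr)=F(i,n_1,n_2).$$
To prove this I would run a short inner induction on $i$ (equivalently, telescope). At $i=1$ one has $CR_{1,n}=1$, so $F(1,\cdot,\cdot)=1$ for every pair, and both sides equal $1$ (matching $\sigma^{ns}_1\equiv 1$). The inductive step amounts to the first-difference identity
$$F(i,n_1,n_2)-F(i-1,n_1,n_2)=F(i,n_1-1,n_2)+F(i,n_1,n_2-1)-F(i,n_1-1,n_2-1),\qquad i\ge 2,$$
since the left side of the summation identity increases by exactly the $j=i$ summand when $i$ grows by one. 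The diagonal case $n_1=n_2$ is handled the same way, except that Proposition \ref{minnstr} leaves a single minimum point, so the recurrence has only one term and the identity to verify is the simpler $F(i,n,n)-F(i-1,n,n)=F(i,n-1,n)$.

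The main obstacle is the first-difference identity, which is where all the computation lives. I would prove it as an identity of rational functions by clearing the common denominator $(n_2+1)n_2\,i(i-1)$ and then collapsing everything onto a single family of $CR$ numbers using two elementary binomial facts: the Pascal-type recurrence $CR_{i,n}=CR_{i-1,n}+CR_{i,n-1}$, which converts the shifts $n_2\mapsto n_2-1$ and $n_1\mapsto n_1-1$ into a single grid step, and the ratio relation $(i+n-1)CR_{i-1,n}=(i-1)CR_{i,n}$, which rewrites every $CR_{i-1,\cdot}$ coming from $F(i-1,\cdot,\cdot)$ in terms of $CR_{i,\cdot}$. After these substitutions each side becomes a common scalar multiple of the products $CR_{i,n_1}CR_{i,n_2}$, $CR_{i,n_1-1}CR_{i,n_2}$, and so on, and the remaining check is a polynomial identity in $i,n_1,n_2$ among the numerators; I expect it to close after routine regrouping, consistent with the sample values (for instance $F(2,1,2)=5$, $F(1,1,2)=1$, and $F(2,0,2)+F(2,1,1)-F(2,0,1)=3+3-2=4=F(2,1,2)-F(1,1,2)$, together with the analogous diagonal checks). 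To keep the bookkeeping manageable I would first isolate the decomposition $F(i,n_1,n_2)=\frac{n_2-n_1+1}{n_2+1}CR_{i,n_1}CR_{i,n_2}+\frac{n_1}{n_2+1}\cdot\frac{CR_{i,n_1}CR_{i,n_2}}{i}$, since the two pieces transform cleanly under the two binomial identities above.
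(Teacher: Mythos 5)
Your proposal is correct and is essentially the paper's own (omitted) argument: the paper simply states that the formula "can now be easily proved inductively by using the previous proposition," which is exactly the induction on $n_1+n_2$ via the recurrence, with the diagonal case $n_1=n_2$ treated separately, that you set up. The two first-difference identities you reduce to do close as claimed --- after clearing denominators with the ratio relation $(i+n-1)CR_{i-1,n}=(i-1)CR_{i,n}$ and $CR_{i,n-1}=\frac{n}{i+n-1}CR_{i,n}$, both sides become equal polynomials in $i,n_1,n_2$ (the diagonal case collapsing via $(i+n)(i+n-1)-i(i-1)=n(2i+n-1)$) --- so your outline is a complete proof plan.
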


To finish the paper, we will present some differential equations derived from non-strict dispositions in digraphs.

\begin{proposition} The generatrix function $F^{ns}_S(X)$ of Proposition \ref{mbffk} satisfies the differential equation $$XY^{\prime\prime}+(1-4x)Y^{\prime}-2Y+1=0$$
\end{proposition}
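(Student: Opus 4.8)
The plan is to start from the closed form $F^{ns}_S(X)=\tfrac12\bigl(1+\exp(2X)I_0(2X)\bigr)$ supplied by Proposition \ref{mbffk}, and to reduce the verification of the stated equation to the modified Bessel equation obeyed by $I_0$. Writing $Y=\tfrac12(1+u)$ with $u(X)=\exp(2X)I_0(2X)$, one has $Y'=\tfrac12 u'$ and $Y''=\tfrac12 u''$. Substituting these into $XY''+(1-4X)Y'-2Y+1$ and pulling out the factor $\tfrac12$, the term $-2Y=-(1+u)$ combines with the inhomogeneous $+1$ so that the two free constants cancel; hence the equation to be proved is equivalent to the single homogeneous identity $Xu''+(1-4X)u'-2u=0$.

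Next I would strip off the exponential factor. Setting $g(X)=I_0(2X)$ so that $u=e^{2X}g$, differentiation gives $u'=e^{2X}(2g+g')$ and $u''=e^{2X}(4g+4g'+g'')$. Inserting these into $Xu''+(1-4X)u'-2u$ and factoring out $e^{2X}$, the constant ($X$-free) contributions to the coefficient of $g$ cancel, as do the two $Xg'$ terms, leaving exactly $e^{2X}\bigl(Xg''+g'-4Xg\bigr)$. Thus the whole problem comes down to proving $Xg''+g'-4Xg=0$.

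Finally I would appeal to the Bessel equation itself. The function $I_0$ satisfies $z\,I_0''(z)+I_0'(z)-z\,I_0(z)=0$. Using the chain rule, $g'=2I_0'(2X)$ and $g''=4I_0''(2X)$, so that
$$Xg''+g'-4Xg=4X\,I_0''(2X)+2\,I_0'(2X)-4X\,I_0(2X)=2\bigl(2X\,I_0''(2X)+I_0'(2X)-2X\,I_0(2X)\bigr),$$
and the bracket vanishes by the Bessel equation evaluated at $z=2X$. This closes the argument.

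The computations are entirely routine; the only points requiring a little care are keeping track of the factors of $2$ introduced by the argument $2X$ under the chain rule, and checking that the constant $+1$ in the differential equation is precisely what is needed to absorb the additive constant in $Y=\tfrac12(1+u)$. I expect no genuine obstacle here. An equivalent route would avoid Bessel functions altogether and work from the power series $F^{ns}_S(X)=\sum_{i\ge 0}\binom{2i-1}{i}X^i/i!$ recorded in the proof of Proposition \ref{mbffk}, substituting it into the equation and verifying the resulting recurrence among the coefficients; but passing through the Bessel equation is shorter and cleaner.
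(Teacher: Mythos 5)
Your proof is correct and follows essentially the same route as the paper, whose proof is simply the one-line remark that the equation is an easy consequence of the closed form $F^{ns}_S(X)=\tfrac12\bigl(1+\exp(2X)I_0(2X)\bigr)$ from Proposition \ref{mbffk}. You have merely spelled out that verification in full --- reducing via $Y=\tfrac12(1+u)$, stripping the exponential, and invoking the modified Bessel equation $zI_0''(z)+I_0'(z)-zI_0(z)=0$ at $z=2X$ --- and all the bookkeeping (the cancellation of the constants by the inhomogeneous term $+1$ and the chain-rule factors of $2$) checks out.
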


\begin{proof} It is an easy consequence of that proposition.
\end{proof}

As a consequence, we obtain:

\begin{corollary}\label{deexp22} The function $\exp(2x)I_0(2x)$ satisfies the differential equation $$XY^{\prime\prime}+(1-4x)Y^{\prime}-2Y=0$$
\end{corollary}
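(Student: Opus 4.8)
The plan is to exploit the linearity of the differential operator together with the closed form for $F^{ns}_S(X)$ supplied by Proposition \ref{mbffk}. Write $L[Y]=XY^{\prime\prime}+(1-4X)Y^{\prime}-2Y$ for the operator appearing on the left-hand side of the target equation. With this notation, the immediately preceding proposition asserts that $L[F^{ns}_S(X)]+1=0$, i.e. $L[F^{ns}_S(X)]=-1$, and the goal is to show $L[Y]=0$ for $Y=\exp(2X)I_0(2X)$.

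First I would solve for the function of interest inside the closed form. By Proposition \ref{mbffk} we have $F^{ns}_S(X)=\tfrac12\bigl(1+\exp(2X)I_0(2X)\bigr)$, so setting $Y=\exp(2X)I_0(2X)$ gives the simple algebraic relation $Y=2\,F^{ns}_S(X)-1$. The point is that $Y$ differs from $2F^{ns}_S(X)$ only by an additive constant.

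Next, since $L$ is linear, $L[Y]=L\bigl[2F^{ns}_S(X)-1\bigr]=2\,L[F^{ns}_S(X)]-L[1]$. The constant function $1$ has vanishing first and second derivatives, so $L[1]=-2\cdot 1=-2$. Substituting $L[F^{ns}_S(X)]=-1$ from the preceding proposition yields $L[Y]=2(-1)-(-2)=0$, which is precisely the asserted equation.

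There is essentially no obstacle here; the only point requiring a moment's care is the bookkeeping of the constant term. One must check that the inhomogeneous $+1$ in the proposition is cancelled exactly by the contribution $-L[1]=+2$ that arises from dropping the constant $1$ when passing from $2F^{ns}_S(X)$ to $\exp(2X)I_0(2X)$, and indeed these match because the coefficient of $Y$ in $L$ is $-2$ and the constant removed is $1$, giving $-L[1]=-(-2)=2=-2\cdot(\text{inhomogeneous term})$.
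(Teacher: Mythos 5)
Your proof is correct and follows exactly the paper's route: the corollary is stated there as an immediate consequence of the preceding proposition, and your linearity computation $L[2F^{ns}_S(X)-1]=2L[F^{ns}_S(X)]-L[1]=2(-1)-(-2)=0$ simply makes explicit the one-line deduction the paper leaves implicit. The only blemish is the final bookkeeping remark, where writing $2=-2\cdot(\text{inhomogeneous term})$ is only right if the inhomogeneous term is read as $-1$ (from $L[F^{ns}_S(X)]=-1$) rather than the $+1$ appearing in the proposition's equation; the main computation is unaffected.
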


We note that the previous differential equation is formally very similar to Laguerre's equation, although the solution is not a polynomial, but it has a discrete interpretation in terms of non-strict counters of digraphs.

In general, if we define for $n,m,k\in\Bbb N$ $$F_{n,m,k}(X)=\exp(nx)I_m(kx),$$ then it is easy to prove, by expanding the function as a power series, the following:

\begin{proposition} The function $F_{n,m,k}(X)$ satisfies the differential equation $$X^2Y^{\prime\prime}+(X-2n X^2)Y^{\prime}+(-m^2-nX+(n^2-k^2)X^2)Y=0$$
\end{proposition}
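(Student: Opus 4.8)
The final proposition asserts that $F_{n,m,k}(X)=\exp(nX)I_m(kX)$ satisfies
$$X^2Y''+(X-2nX^2)Y'+(-m^2-nX+(n^2-k^2)X^2)Y=0.$$
The author explicitly suggests the method: expand $F_{n,m,k}(X)$ as a power series and verify the recurrence among coefficients that the differential equation imposes. Let me sketch how I would actually carry this out.

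\medskip

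The plan is to reduce everything to the known differential equation for the modified Bessel function $I_m$ and then transfer it through the exponential factor by the product rule. First I would recall that the modified Bessel function $I_m(t)$ satisfies the modified Bessel equation
$$t^2 u''(t)+t\,u'(t)-(t^2+m^2)u(t)=0.$$
Write $g(X)=I_m(kX)$, so that $g'(X)=k\,I_m'(kX)$ and $g''(X)=k^2 I_m''(kX)$; substituting $t=kX$ into the modified Bessel equation and clearing the substitution gives the equation satisfied by $g$, namely
$$X^2 g''(X)+X\,g'(X)-\bigl(k^2X^2+m^2\bigr)g(X)=0.$$
This is the one genuinely external fact I would invoke; the rest is mechanical. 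Then I would set $Y=e^{nX}g(X)$ and compute $Y'=e^{nX}(ng+g')$ and $Y''=e^{nX}(n^2 g+2n g'+g'')$. The idea is that the factor $e^{nX}$ never vanishes, so after substituting these into the target equation I can divide through by $e^{nX}$ and obtain a second-order linear expression purely in $g,g',g''$, which I must then show vanishes identically.

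\medskip

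The key computational step is to substitute $Y,Y',Y''$ into $X^2Y''+(X-2nX^2)Y'+(-m^2-nX+(n^2-k^2)X^2)Y$, factor out $e^{nX}$, and collect the coefficients of $g''$, $g'$, and $g$ separately. I expect the coefficient of $g''$ to be $X^2$; the coefficient of $g'$ to simplify to $X$ (the $2nX^2$ contributions from $Y''$ and from $Y'$ must cancel against each other); and the coefficient of $g$ to collapse, after the $n^2X^2$ and $nX$ terms cancel, to exactly $-(k^2X^2+m^2)$. If this happens, the resulting expression is precisely $X^2g''+Xg'-(k^2X^2+m^2)g$, which is zero by the modified Bessel equation for $g$ derived above, completing the proof. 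The main obstacle, therefore, is purely bookkeeping: ensuring the cross-terms involving $n$ (the $n^2X^2$, $-2n^2X^2$, $-nX$ and $+nX$ pieces) cancel exactly as the coefficients in the stated equation were clearly reverse-engineered to guarantee. I would verify this cancellation term by term rather than trusting it, since a sign error there is the only place the argument can break.

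\medskip

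As an alternative—and a useful cross-check consistent with the author's phrasing ``by expanding the function as a power series''—I would use the explicit series $I_m(kX)=\sum_{j\ge 0}\frac{1}{j!\,(j+m)!}\bigl(\tfrac{kX}{2}\bigr)^{2j+m}$, multiply by $\exp(nX)=\sum_\ell \frac{(nX)^\ell}{\ell!}$ to get $Y=\sum_p c_p X^p$, and substitute into the differential operator to obtain a recurrence on the $c_p$. This route avoids citing the Bessel equation but is more laborious; I would keep it in reserve to confirm the operator annihilates $Y$ coefficientwise should the factored form leave any doubt. Given the structure of the problem, I strongly favor the first approach, as it isolates all the difficulty into a single short cancellation verified against the known equation for $I_m$.
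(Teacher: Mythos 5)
Your proof is correct, but your primary route differs from the paper's. The paper disposes of this proposition in one line, asserting it is ``easy to prove, by expanding the function as a power series'' --- that is, the paper's method is your \emph{reserve} approach: write $Y=\exp(nX)I_m(kX)=\sum_p c_p X^p$ and check that the operator annihilates $Y$ coefficientwise. Your main argument instead invokes the modified Bessel equation $t^2u''+tu'-(t^2+m^2)u=0$ as a known external fact, rescales to get $X^2g''+Xg'-(k^2X^2+m^2)g=0$ for $g(X)=I_m(kX)$, and then conjugates by the nonvanishing factor $e^{nX}$; the cancellations you flag do work out exactly as you predict (coefficient of $g''$ is $X^2$; of $g'$ is $2nX^2+(X-2nX^2)=X$; of $g$ is $n^2X^2+n(X-2nX^2)-m^2-nX+(n^2-k^2)X^2=-(k^2X^2+m^2)$), so the expression reduces to $e^{nX}\bigl(X^2g''+Xg'-(k^2X^2+m^2)g\bigr)=0$. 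What each approach buys: the paper's series verification is self-contained and uniform (it needs no facts about $I_m$ beyond its defining series, and it matches how the paper treats the special case in Proposition~\ref{mbffk} and Corollary~\ref{deexp22}), while yours is structurally more illuminating --- it exhibits the stated equation as the image of the modified Bessel equation under the gauge transformation $Y\mapsto e^{nX}Y$, which explains \emph{why} the coefficients have the form they do (e.g.\ why the $X^2$ term of the zeroth-order coefficient vanishes precisely when $n=k$, as the paper observes afterward) rather than merely confirming them. Both are valid; yours is arguably the better proof to record, and your term-by-term caution is exactly where a sign error would hide, though there is none.
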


In particular, when $n=k$ the $X^2$ in the coefficient of $Y$ vanishes, and we get that $F_{n,k,n}(X)$ satisfies the differential equation $$X^2Y^{\prime\prime}+(X-2n X^2)Y^{\prime}-(m^2+nX)Y=0$$

If, besides, $m=0$ the $X$ factor simplifies, and we get the differential equation $$XY^{\prime\prime}+(1-2nX)Y^{\prime}-nY=0,$$ which for $n=2$ is the one shown in Corollary \ref{deexp22}.

It would be interesting to study if the functions $F_{n,m,k}(X)$ have a combinatorial interpretation similar to the one that we presented in the corollary.

{\bf Acknowledgements}

We would like to thank Martin Milani\v{c} for helpful discussions.

\end{document}